\tikzstyle{hvertex}=[thick,circle,inner sep=0.cm, minimum size=2mm, fill=white, draw=black]
\tikzstyle{hedge}=[very thick]
\tikzstyle{rededge}=[very thick,red]
\tikzstyle{point}=[draw,circle,inner sep=0.cm, minimum size=1mm, fill=black]
\tikzstyle{pointer}=[thick,->,shorten >=2pt,color=hellgrau]
\tikzstyle{pathedge}=[hedge,decorate, decoration={random steps,segment length=3pt,amplitude=1pt}]
\colorlet{auchblau}{blue!60!white}
\colorlet{hellblau}{blue!20!white}
\colorlet{hellrot}{red!40!white}
\colorlet{hellgrau}{black!30!white}
\colorlet{dunkelgrau}{black!60!white}
\colorlet{grau}{black!50!white}
\newtheorem{definition}{Definition}
\newtheorem{proposition}[definition]{Proposition}
\newtheorem{theorem}[definition]{Theorem}
\newtheorem{corollary}[definition]{Corollary}
\newtheorem{lemma}[definition]{Lemma}
\newcommand{\cP}{\mathcal{P}}
\newcommand{\cQ}{\mathcal{Q}}
\newcommand{\cT}{\mathcal{T}}
\newcommand{\cR}{\mathcal{R}}
\newcommand{\cH}{\mathcal{H}}
\newcommand{\comment}[1]{}
\newcommand{\N}{\mathbb N}
\newcommand{\Z}{\mathbb Z}
\newcommand{\m}{\textrm{mod}\ m}
\newcommand{\EPP}{Erd\H{o}s-P\'osa property}
\newcommand{\eptan}{\ensuremath{\mathcal T_{\text{\rm EP}}}}
\title{Zero $A$-paths and the \EPP}
\author{Arthur Ulmer\thanks{supported by DFG, grant no.\  321904558}}
\date{}
\begin{document}
\maketitle

\begin{abstract}
Let $\Gamma$ be an Abelian group.
In this paper I characterize the $A$-paths of weight $0\in\Gamma$
that have the Erd\H{o}s-P\'osa property. 
Using this in an auxiliary graph, one can also easily characterize the $A$-paths of weight $\gamma\in\Gamma$ that have the \EPP.
These results also extend to long paths, that is paths of some minimum length.

A structural result on zero walls with non-zero linkages will be proven as a means to prove the main result of this paper. This immediately implies that zero cycles with respect to an Abelian group $\Gamma$ have the \EPP.
\end{abstract}

\section{Introduction}
Menger's famous theorem describes a relation between the maximum number of paths $p_{max}$ between two sets $A$ and $B$ and the minimum number $s_{min}$ of vertices that separates these two sets. It is clear that $s_{min}\geq p_{max}$ since one vertex of each disjoint path has to be in any separator. The theorem of Menger states that we can also bound $s_{min}$ from above by $p_{max}$.
From this we obtain that for any positive integer $k$ and any graph $G$ with vertex sets $A$ and $B$, there are either $k$ disjoint $A$-$B$-paths in $G$ or a set of at most $k-1$ vertices that intersect all $A$-$B$-paths.
We say that $A$-$B$-paths have the \EPP. More generally, we say that a class of graphs $\cH$ has the \emph{(vertex-)\EPP}\ if there is a function $f:\mathbb{N}\to\N$ such that in every graph $G$ and for every $k\in\N$, there are either $k$ (vertex-)disjoint subgraphs of $G$ that belong to $\cH$ or a set of at most $f(k)$ vertices that intersects each such subgraph in $G$. We call $f$ a \emph{hitting set function} for $\cH$. Note that this definition technically does not encompass $A$-$B$-paths; we extend the definition in the obvious way. 

Gallai \cite{Gal61} has shown that there is such a relation for $A$-paths, too (an $A$-path is a path that connects two different vertices of a vertex set $A$).
For any integer $k$ and any graph $G$ with a vertex set $A\subseteq V(G)$, there are either $k$ disjoint $A$-paths in $G$ or a set of at most $2k-2$ vertices that intersects all $A$-paths.
Let $\Gamma$ be some Abelian group and label each edge of a graph $G$ with an element of $\Gamma$. A non-zero $A$-path (with respect to $\Gamma$) is an $A$-path such that the sum of the weights of its edges is non-zero.
Wollan \cite{Wol10} has shown that for all groups $\Gamma$, the non-zero $A$-paths (with respect to $\Gamma$) have the \EPP. So what about zero $A$-paths? For some special cases, this has already been looked at.
For fixed integers $m$, Bruhn et al. proved in \cite{BHJ18b} that $A$-paths of length $0\ \m$ do not have the \EPP\ if $m$ is non-prime and $m\neq 4$. They also showed that even $A$-paths have the \EPP, that is $A$-paths of length $0\ \textrm{mod}\ 2$. Furthermore, Bruhn and U. \cite{BU18} showed that $A$-paths of length $0\ \textrm{mod}\ 4$ have the \EPP. In this paper I will show:

\begin{restatable}{theorem}{mainThm}\label{mainThm}
Let $\Gamma$ be an Abelian group. The zero $A$-paths (with respect to $\Gamma$) have the \EPP\ if and only if:
\begin{itemize}
\item $\Gamma$ is finite and
\item for all $x,y\in\Gamma$ such that $y\neq 0$, there is an $n\in\Z$ such that $2x+ny=0$.
\end{itemize}
\end{restatable}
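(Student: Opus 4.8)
I would prove the two directions separately. As a warm-up useful for both, note that the two displayed conditions pin down a short list of groups: rewriting the second as ``$2\Gamma\subseteq\langle y\rangle$ for every $y\in\Gamma\setminus\{0\}$'', a short case analysis shows that a group satisfying both conditions is the trivial group, an elementary abelian $2$-group, $\Z/4\Z$, or $\Z/p\Z$ with $p$ an odd prime --- and conversely each of these satisfies them. This matches the known results quoted above and tells us exactly which groups the sufficiency argument must treat and which counterexamples the necessity argument must build.

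\textbf{Necessity.} Suppose one of the two conditions fails. If the arithmetic condition fails, fix a witness $x,y\in\Gamma$ with $y\neq0$ and $2x+ny\neq0$ for all $n\in\Z$. I would adapt the labelled-grid counterexamples of Bruhn et al.: for each $\ell$, take a large cylindrical wall, put $A$ on its outer cycle, and label the edges so that every bounded face has weight $y$ while a path crossing the wall radially from the outer to the innermost cycle accumulates weight $x$. A zero $A$-path is then forced to hairpin through the central region, and the only available adjustments of its weight --- re-routing a leg along a near-parallel radial strand (which changes the weight by $2x$) or detouring around bounded faces (a multiple of $y$) --- never reach $0$ by choice of the witness unless the path uses the innermost cycle; one then designs that innermost cycle so that it carries $\ell$ ``independent'' zero $A$-paths, pairwise crossing (hence no two disjoint) yet with no transversal of size $\ell$. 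If instead $\Gamma$ is infinite, a separate construction is needed, since the arithmetic condition may still hold (e.g.\ for $\bigoplus_{i\in\N}\Z/2\Z$); here one uses that a bounded-treewidth grid already realises arbitrarily many distinct elements of $\Gamma$ as weights of radial subpaths, enough to defeat any fixed hitting-set function. The two properties required of each construction are then checked by a careful but routine analysis of how an $A$-path can traverse a labelled wall.

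\textbf{Sufficiency.} Assume $\Gamma$ finite and the arithmetic condition. I would prove EPP by the flat-wall induction. Step~1, the bounded-treewidth lemma: if $\mathrm{tw}(G)$ is bounded in terms of $k$, then zero $A$-paths have EPP in $G$, by the usual dynamic programming over a tree decomposition --- and this step is where finiteness of $\Gamma$ is essential, as the state carried in a bag includes a partial weight in $\Gamma$. Step~2: if $\mathrm{tw}(G)$ is large, use the tangle--wall correspondence to find a huge wall, then the flat wall theorem (in its ``odd'' variant) to obtain, after deleting a set $Z$ of size bounded in $k$, a huge flat wall $W$ in $G-Z$ with its attachment structure. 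Step~3, the dichotomy: either the terminals reach $W$ and can be routed through it to give $k$ pairwise-disjoint zero $A$-paths, or $W$ is ``zero enough'' that after one more bounded deletion no zero $A$-path meets the centre of $W$, so all zero $A$-paths live in a bounded-treewidth-plus-bounded-apices part and we finish by Step~1 and induction. The bridge is the announced structural result on zero walls with non-zero linkages (with an auxiliary lemma to route a linkage to the wall's boundary, and a separate argument for the elementary abelian $2$-case, where $2\Gamma=0$): in a flat wall all of whose bounded cycles have weight $0$, a linkage between terminals of non-zero total weight can be re-routed, using the corrections $2x$ (two legs of an $A$-path meeting inside the wall, each carrying the same residue $x$ modulo $\langle y\rangle$) and $ny$ (detours around bounded faces), into a zero $A$-path; solvability of $2x+ny=0$ is exactly what makes these corrections suffice, and iterating inside a large enough flat wall produces the required $k$ disjoint zero $A$-paths.

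\textbf{Main obstacle.} The crux is Step~3, and within it the structural lemma on zero walls with non-zero linkages. Two points need real work: (i) pinning down which corrections to a would-be zero $A$-path are actually realisable inside a flat wall, and showing the realisable set is generated by the $2x$ and $ny$ moves and nothing more --- this is where the hypothesis that all bounded cycles of the wall have weight $0$ is indispensable --- so that solvability of $2x+ny=0$ is both necessary and sufficient for the re-routing; and (ii) closing the induction, i.e.\ controlling the apex sets uniformly in $k$ and ensuring the ``zero enough'' alternative recurses on a strictly simpler instance. The necessity direction, the bounded-treewidth lemma, and the tangle/flat-wall inputs are comparatively standard once set up, the one thing not to miss being the infinite-group counterexample, which is not covered by the arithmetic obstruction.
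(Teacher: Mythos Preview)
Your overall architecture for sufficiency diverges from the paper's, and the divergence contains a real gap.

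\medskip
\textbf{Where the paper differs.} The paper uses no flat wall theorem and no bounded-treewidth dynamic programming. It argues by minimal counterexample: from a putative counterexample $(G,A,k)$ one extracts (via a lemma of Bruhn and the author) a large tangle $\cT_{\mathrm{EP}}$ in $G-A$ with the extra property that for every $(C,D)\in\cT_{\mathrm{EP}}$ every zero $A$-path meets $D-C$. From this tangle one gets an ordinary (not flat) wall, then applies Thomassen's pigeonhole argument to pass to a large \emph{zero} subwall $W_0$. The key structural dichotomy is then obtained by applying Wollan's theorem on non-zero $A$-paths (with $A$ taken to be the branch vertices of $W_0$): either there are many disjoint non-zero $B$-paths, which one massages into a pure non-zero linkage for a subwall, or there is a bounded set $X$ after whose deletion all $B$-paths are zero.

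\medskip
\textbf{The gap in your Step 3.} In your ``zero enough'' alternative you assert that after a bounded deletion no zero $A$-path meets the centre of $W$, and then recurse on a bounded-treewidth piece. This is not what happens, and I do not see how to make it happen. When all $B$-paths in $G-X$ are zero, zero $A$-paths are not pushed away from the wall; on the contrary, the wall is exactly where you build them. The paper's treatment of this branch is the most delicate part of the whole proof and looks nothing like a treewidth reduction: using the auxiliary lemma that every vertex with three disjoint paths to $B$ has a well-defined residue in $\Gamma_2$, one first proves (via Menger and the Erd\H{o}s--P\'osa property for $A$--$A$--$B$ paths) that $\gamma$-paths from $A$ to any subset of $B$ themselves satisfy an EPP-type bound; then one argues that for some $\gamma$ there are many disjoint $\gamma$-paths \emph{and} many disjoint $(-\gamma)$-paths from $A$ to the branch vertices, and pairs them through the zero wall to produce $k$ disjoint zero $A$-paths. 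None of this is a bounded-treewidth argument, and the arithmetic hypothesis is not even invoked in this branch --- it is used only in the linkage branch, where one links $A$ to the wall, picks up $n$ linkage paths of common weight $y$, and closes with another $A$-link of the same weight $x$, using $2x+ny=0$.

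\medskip
\textbf{Smaller points.} The flat wall theorem is unnecessary overhead here; ordinary walls and the tangle $\cT_{\mathrm{EP}}$ suffice. You do not mention Wollan's non-zero $A$-path theorem, but it is the engine behind the dichotomy. Your necessity sketch is in the right spirit (labelled grids forcing any zero $A$-path to cross the grid and pick up a top-row edge), though the paper's construction is simpler than the cylindrical-wall picture you describe: an $n\times n$ grid with top-row edges labelled $y$, left $A$-edges labelled $x$, right $A$-edges labelled $-x-y$, and zeros elsewhere; the infinite case is handled by a separate short case analysis, including a distinct construction when $\Gamma_2$ is infinite and $\gamma=0$.
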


This answers a question by Bruhn et al \cite{BHJ18b}. They wanted to know for odd primes $m$ whether the $A$-paths of length $0\ \m$ have the \EPP.

\begin{restatable}{corollary}{primeCor}\label{primeCor}
Let $m$ be a fixed odd prime. The $A$-paths of length $0\ \m$ have the \EPP.
\end{restatable}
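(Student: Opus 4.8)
The plan is to obtain Corollary~\ref{primeCor} as a direct specialization of Theorem~\ref{mainThm} to the cyclic group $\Gamma=\Z/m\Z$. First I would reduce ``length $0\bmod m$'' to ``weight $0$ in $\Z/m\Z$'': given an arbitrary graph $G$ and an arbitrary $A\subseteq V(G)$, label every edge of $G$ with $1\in\Z/m\Z$. Under this labelling the weight of a path is precisely its length taken modulo $m$, so the zero $A$-paths with respect to this labelling are exactly the $A$-paths of length $0\bmod m$. Because the hitting set function supplied by Theorem~\ref{mainThm} is uniform over all graphs and all $\Z/m\Z$-labellings, restricting attention to the all-ones labelling across all graphs yields a single hitting set function for the class of $A$-paths of length $0\bmod m$, which is exactly what the \EPP\ for this class requires.

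It then remains to verify the two conditions in Theorem~\ref{mainThm} for $\Gamma=\Z/m\Z$. The first is clear, since $\Z/m\Z$ is finite. For the second, fix $x,y\in\Z/m\Z$ with $y\neq 0$. As $m$ is prime, $y$ is a unit modulo $m$, so taking $n$ to be any integer congruent to $-2xy^{-1}$ modulo $m$ yields $2x+ny=0$ in $\Z/m\Z$. Thus both conditions hold, and Theorem~\ref{mainThm} gives the \EPP\ for zero $A$-paths with respect to $\Z/m\Z$, hence for $A$-paths of length $0\bmod m$.

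I do not anticipate a real obstacle: the corollary is essentially a reading-off of Theorem~\ref{mainThm}, and the only points to check --- that the all-ones $\Z/m\Z$-labelling encodes length modulo $m$, and that primality makes every nonzero $y$ invertible so that $2x+ny=0$ is always solvable --- are routine. I would add the remark that the same computation goes through for $m=2$ as well (there $2x=0$, so $n=0$ works), so this argument also re-derives the known \EPP\ for even $A$-paths; the restriction to an \emph{odd} prime in the statement merely reflects that the odd case is the one that was left open by Bruhn et al.\ in \cite{BHJ18b}.
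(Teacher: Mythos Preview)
Your proposal is correct and follows essentially the same approach as the paper: label all edges with $1\in\Z_m$ so that path weight equals length modulo $m$, then verify the hypotheses of Theorem~\ref{mainThm} using that every nonzero element of $\Z_m$ is invertible (equivalently, a generator) when $m$ is prime. The paper routes this through the slightly more general $d\bmod m$ corollary (derived from Corollary~\ref{mainCor}), but for $d=0$ that collapses to exactly your argument.
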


From Theorem \ref{mainThm}, one can quite easily deduce the following corollary:
\begin{restatable}{corollary}{mainCor}\label{mainCor}
Let $\Gamma$ be an Abelian group and let $\gamma\in\Gamma$. The $A$-paths of weight $\gamma$ have the \EPP\ if and only if:
\begin{itemize}
\item $\Gamma$ is finite and
\item for all $x,y\in\Gamma$ such that $y\neq 0$, there is an $n\in\Z$ such that $2x+ny=\gamma$.
\end{itemize}
\end{restatable}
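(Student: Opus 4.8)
The plan is to deduce the weight-$\gamma$ characterization from Theorem~\ref{mainThm} by means of a pendant-vertex gadget, supplemented by a short group computation. Given a $\Gamma$-labelled graph $G$ with terminal set $A$, let $G'$ arise from $G$ by attaching to each $a\in A$ a new leaf $a'$ joined to $a$ by an edge of a fixed weight $x$, and set $A'=\{a':a\in A\}$. Since each leaf has degree one, every $A'$-path of $G'$ has the form $a'Pb'$ for a unique $A$-path $P$ of $G$, and it has weight $2x+\mathrm{wt}(P)$; hence the zero $A'$-paths of $G'$ are in disjointness-preserving bijection with the $A$-paths of $G$ of weight $-2x$, and a hitting set on one side yields a hitting set of at most the same size on the other (a leaf $a'$ in a hitting set may be replaced by $a$). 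Attaching such leaves either to $G$ or to an arbitrary host graph, one reads off that the \EPP-status of weight-$\gamma$ $A$-paths over $\Gamma$ depends only on the coset $\gamma+2\Gamma$, and that it coincides with the \EPP-status of zero $A$-paths over $\Gamma$ whenever $\gamma\in 2\Gamma$.

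First I would dispose of the case $\gamma\in 2\Gamma$: by the previous paragraph together with Theorem~\ref{mainThm} it only remains to check that the arithmetic condition of Corollary~\ref{mainCor} for $(\Gamma,\gamma)$ is equivalent to the arithmetic condition of Theorem~\ref{mainThm} for $\Gamma$, and writing $\gamma=2x_{0}$ and substituting $x\mapsto x-x_{0}$ makes this immediate (``$\Gamma$ finite'' being common to both). For the case $\gamma\notin 2\Gamma$ I would pass to the enlarged group $\Gamma':=(\Gamma\oplus\Z)/\langle(\gamma,-2)\rangle$, which contains $\Gamma$ via $g\mapsto(g,0)$, is finite of order $2|\Gamma|$ when $\Gamma$ is finite, and in which the image of $\gamma$ equals $2t$ for $t=(0,1)$. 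Relabelling the edges of $G$ into $\Gamma'$ and running the gadget with $x=t$ identifies the weight-$\gamma$ $A$-paths over $\Gamma$ (up to the harmless replacement of $\gamma$ by $-\gamma$) with the zero $A'$-paths over $\Gamma'$ in graphs whose labels lie in the subgroup $\Gamma$. So the ``if'' direction follows once the condition of Corollary~\ref{mainCor} for $(\Gamma,\gamma)$ is shown to force the condition of Theorem~\ref{mainThm} for $\Gamma'$; here a brief analysis suffices: setting $x=0$ in the condition forces $\gamma$ into every nontrivial cyclic subgroup of $\Gamma$, so $\Gamma$ is cyclic of prime-power order, and then $\gamma\notin 2\Gamma$ pins the pair down to $(\Z_2,1)$, for which $\Gamma'\cong\Z_4$ and one checks $\Z_4$ directly; equivalently, this case of the ``if'' direction amounts to the statement that odd $A$-paths have the \EPP, which follows from Theorem~\ref{mainThm} applied to $\Z_4$ (or from \cite{BU18}).

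It remains to address the ``only if'' direction. When $\gamma\in 2\Gamma$ it is free: the same gadget also reduces in the opposite direction (attach leaves of weight $x_{0}$ to an arbitrary host graph to turn its zero $A$-paths into weight-$\gamma$ $A$-paths), so the two classes have the \EPP\ simultaneously and the ``only if'' part of Theorem~\ref{mainThm} applies. The case $\gamma\notin 2\Gamma$ is where I expect the main obstacle to lie: the pendant/extension gadget only ever relates zero $A$-paths to $A$-paths whose target weight lies in $2\Gamma$, and the coset $\gamma+2\Gamma$ cannot be shifted into $2\Gamma$, so there is no reduction back to Theorem~\ref{mainThm}. Instead one has to exhibit, whenever $\gamma\notin 2\Gamma$ and the arithmetic condition fails (for instance $\Gamma$ infinite, or $\Gamma/2\Gamma$ of rank at least two, or $\Gamma$ a cyclic $2$-group other than $\Z_2$), a family of graphs witnessing the failure of the \EPP\ for weight-$\gamma$ $A$-paths; I would produce these by transcribing the lower-bound constructions behind the ``only if'' part of Theorem~\ref{mainThm}, choosing the group labels on the relevant walls so that no bounded family of $A$-paths of weight exactly $\gamma$ can be packed. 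Everything else — the gadget, the bijection of packings, the transfer of hitting sets, and the arithmetic bookkeeping — is routine.
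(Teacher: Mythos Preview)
Your overall plan---reduce weight-$\gamma$ $A$-paths to zero $A$-paths by a weight shift, check that the two arithmetic conditions match, and handle necessity via the constructions behind Theorem~\ref{mainThm}---is exactly the paper's strategy. But the pendant-vertex gadget does not do what you claim, and this is a real gap in the sufficiency direction.

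You assert that every $A'$-path of $G'$ has the form $a'Pb'$ for a unique $A$-path $P$ of $G$. That is false: an $A'$-path must have its interior disjoint from $A'$, but nothing prevents the interior from meeting $A$. Hence the middle segment $aPb$ is merely a path in $G$ between two vertices of $A$, not an $A$-path in the paper's sense (which forbids internal $A$-vertices). For a concrete failure take $\Gamma=\Z_3$, $A=\{a,b,c\}$, and the path $a\,v\,b\,w\,c$ with edge weights $1,1,1,0$: there is no zero $A$-path in $G$, yet $G'$ with pendant weight $0$ contains the zero $A'$-path $a'avbwcc'$. So when the \EPP\ for zero $A'$-paths hands you a packing in $G'$, you cannot read off a packing of weight-$\gamma$ $A$-paths in $G$, and your reduction breaks precisely in the case where you need it.

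The paper avoids this by shifting weights \emph{in place} rather than via pendants. It first deals with edges inside $G[A]$ by a matching argument of cost at most $2k$, so that every remaining $A$-path contains exactly two edges incident with $A$; then it adds $\delta$ to the label of each edge incident with $A$, where $2\delta=-\gamma$. In the relabelled graph $H$ the $A$-paths are literally the same paths as in $G$, with weight shifted by exactly $2\delta$, and now zero $A$-paths in $H$ and weight-$\gamma$ $A$-paths in $G$ genuinely coincide. Your group-extension detour for $\gamma\notin 2\Gamma$ is also unnecessary: the paper shows directly, by a short case analysis on the order of $\gamma$, that the arithmetic condition already forces the existence of such a $\delta$ (the lone exception being $(\Gamma,\gamma)=(\Z_2,1)$, which is exactly the class of non-zero $A$-paths and is covered by Wollan's theorem). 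Finally, necessity for arbitrary $\gamma$ is proved in the paper uniformly by the grid constructions of Section~4, so no reduction is needed there at all.
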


The idea of the proof of Theorem \ref{mainThm} is to assume that it is false, which implies that there is some counterexample. In \cite{BU18}, it has been shown that there is then a large tangle in that counterexample and from a theorem of Robertson and Seymour, it follows that there is a large wall. Then the following result, which will be proven in this paper, can be applied, which yields a nice structure. If $\Gamma$ is a fixed Abelian group, whenever something is said to be zero, it means that it is zero with respect to $\Gamma$. We will explain everything else in detail later.

\begin{restatable}{theorem}{primeWall}\label{primeWall}
Let $\Gamma$ be a finite Abelian group such that $m=|\Gamma|$ and let $r,s$ be some positive integers.
There are integers $f_{\ref{primeWall}}(m,r,s)$ and $g_{\ref{primeWall}}(s)$ such that if a graph $G$ contains a wall $W$ of size at least $f_{\ref{primeWall}}(m,r,s)$, then there is a zero wall $W_0$ of size at least $r$ in $G$ and either 
\begin{itemize}
\item $W_0$ has a pure non-zero linkage $\mathcal{P}$ of size $s$ or
\item there is a vertex set $X$ of size at most $g_{\ref{primeWall}}(s)$ that is disjoint from $W_0$ such that all paths in $G-X$ between the branch vertices of $W_0$ have weight zero.
\end{itemize}
Additionally, the tangle $\cT_{W_0}$ is a truncation of $\cT_W$.
\end{restatable}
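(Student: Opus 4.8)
\textit{Proof plan.}
I would prove Theorem~\ref{primeWall} in two phases. In the first phase the large wall $W$ is shrunk to a large \emph{zero} wall; in the second, a Menger-type dichotomy on that zero wall decides which of the two alternatives holds and a cleaning step puts the outcome into the required normal form. Throughout, $W_0$ is obtained from $W$ by repeatedly passing to sub-walls (plus, in one case below, a single coarsening), so the last sentence of the statement — that $\cT_{W_0}$ is a truncation of $\cT_W$ — will follow from the standard Robertson--Seymour correspondence between (sub-)walls and their tangles.

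\textit{Phase 1: extracting a zero wall.} Regard $W$, up to the usual linear blow-up between walls and grids, as a grid, and colour each brick by the weight of its boundary cycle; since $\Gamma$ is finite there are only $m$ colours. A product-Ramsey argument then produces a large sub-grid all of whose bricks carry the same weight $\delta\in\Gamma$, of size as large as needed at the cost of enlarging $W$ by a function of $m$ and the target size — this is where the dependence of $f_{\ref{primeWall}}$ on $m=|\Gamma|$ enters. If $\delta=0$, every cycle of this sub-grid is a sum of brick boundaries, so it is already a zero wall. If $\delta\neq 0$, let $t$ be the order of $\delta$ and take the sub-wall consisting of every $t$-th row and every $t$-th column together with the connecting segments; each of its faces now encloses $t^2$ old bricks and hence has weight $t^2\delta=0$, so this too is a zero wall, while the interiors of the $t\times t$ blocks still carry weight $\delta\neq 0$ and, once routed out to the wall, will furnish the pure non-zero linkage of the first alternative. (When $\Gamma$ is not cyclic this monochromatic step is better replaced by the bipartite-block reduction feeding into the odd flat wall theorem, which gives the same split more cheaply.)

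\textit{Phase 2: the dichotomy.} Suppose now we have a zero wall $W_0'$ of size far larger than $r$. Apply Wollan's theorem \cite{Wol10} to the non-zero $A$-paths of $G$ with $A$ the branch vertex set of $W_0'$. If it returns a hitting set $X$ with $|X|\le f_{\text{Wollan}}(s)=:g_{\ref{primeWall}}(s)$, then $X$ meets only boundedly many bricks of $W_0'$, and passing to a sub-wall $W_0\subseteq W_0'$ of size $r$ that avoids those bricks and whose branch vertices are branch vertices of $W_0'$ puts us in the second alternative: $X$ is disjoint from $W_0$ and kills every non-zero path of $G$ between branch vertices of $W_0$. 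If instead it returns $s$ disjoint non-zero $A$-paths, I would exploit that $W_0'$ is a zero wall to reroute the endpoints of these paths along its rows and columns into a clean region in a weight-controlled way (feasible because the wall is huge and $\Gamma$ finite, so each potential class of wall vertices recurs often); the lemmas on routing path linkages to the top of a wall and on linkages versus wall minors then convert the family into a \emph{pure} non-zero linkage of size $s$ attached to a clean sub-wall $W_0$ of size $r$ disjoint from it, i.e.\ the first alternative. Composing the two phases and adding the room Phase~2 needs to absorb the $g_{\ref{primeWall}}(s)$ deleted vertices and to perform the rerouting yields $f_{\ref{primeWall}}(m,r,s)$.

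\textit{Main obstacle.} The routine parts are the Ramsey colouring and the invocation of Wollan's theorem; the work lies in the rerouting and normalisation — turning an arbitrarily tangled family of non-zero paths (or the block interiors of Phase~1) into a genuinely \emph{pure} linkage attached to a smaller, clean wall without destroying disjointness or non-zeroness, while retaining enough control over which rows and columns survive to certify that $\cT_{W_0}$ truncates $\cT_W$. This is precisely what the auxiliary lemmas on path linkages and wall minors are designed for, so I expect the heart of the argument to be setting those up rather than Theorem~\ref{primeWall} itself.
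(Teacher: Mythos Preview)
Your Phase~1 has a genuine gap. In this paper a \emph{zero wall} means that every subdivided edge (every path between adjacent branch vertices) has weight $0$; it is \emph{not} the condition that every brick cycle has weight $0$. Your Ramsey colouring of bricks, even in the $\delta=0$ case, produces neither: with unsigned edge weights over an Abelian group (which is the setting here), cycle weights are \emph{not} additive over the cycle space. If two bricks share an edge of weight $b$, the boundary of their union has weight $(\text{brick}_1)+(\text{brick}_2)-2b$, not the sum, so ``all bricks weight $0$'' does not force larger faces to have weight $0$, and ``all bricks weight $\delta$'' does not force a $t\times t$ block to have weight $t^2\delta$. More importantly, even if every face cycle were zero, individual subdivided edges could carry arbitrary weights, so you would not have a zero wall in the required sense. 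The paper extracts a zero wall by a different mechanism (Theorem~\ref{ThomZero}, Thomassen's argument): take prefix sums of weights along a row, pigeonhole among the $m$ possible values to find many branch vertices with equal prefix, so all subpaths between them are zero; iterate over many rows, then repeat along diagonal paths, and finally assemble a wall whose rows are the diagonals. This directly forces each subdivided edge to be zero.

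Your Phase~2 has the right skeleton (apply Wollan's theorem to the branch vertex set $B$ of the zero wall, split on hitting set versus many non-zero $B$-paths, pass to a clean subwall and purify via Lemma~\ref{pureLink}), but the step you flag as routine is where the work is. The paper does not ``reroute endpoints along rows and columns''; it (i) chooses the family $\cP$ of non-zero $B$-paths to minimise the number of edges outside $W'$ and proves, via a local rerouting contradiction, that $\cP$ meets at most $10|\cP|$ subdivided edges of $W'$, which is what gives the clean subwall $W_0$; and then (ii) applies Wollan \emph{a second time} to the nails of $W_0$, and if a small hitting set $X$ appears, uses Lemma~\ref{zeroM} (every vertex with three disjoint paths to $B$ has all its $B$-paths of the same weight in $\Gamma_2$) together with encapsulating cycles to show that some surviving $P\in\cP$ would have to be zero, a contradiction. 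Your sketch does not supply either of these two ingredients.
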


Basically, if there is a large wall in a graph $G$ then there is a wall $W_0$ where all paths in $W_0$ between its branch vertices are zero and either there are some non-zero paths that are attached nicely to $W_0$ or there is a vertex set $X$ such that, not only all paths in $W_0$, but all paths in $G-X$ between the branch vertices of $W_0$ are zero. 

This result implies the following:
\begin{theorem}
Let $\Gamma$ be an Abelian group. The zero cycles (with respect to $\Gamma$) have the \EPP.
\end{theorem}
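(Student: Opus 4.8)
The plan is to derive the statement from Theorem~\ref{primeWall}, which carries all the substance, together with two standard facts: the Wall Theorem of Robertson and Seymour (sufficiently large tree-width forces a large wall), and the observation that every class of connected subgraphs has the \EPP\ on graphs of bounded tree-width. Only the first conclusion of Theorem~\ref{primeWall} — the existence of a large zero wall — will be used; its dichotomy plays no role here.

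I would treat finite $\Gamma$ first, writing $m:=|\Gamma|$, and fix $k\in\N$; the aim is a bound $f(k)$ so that every $\Gamma$-labelled graph $G$ either contains $k$ pairwise vertex-disjoint zero cycles or has a set of at most $f(k)$ vertices meeting all its zero cycles. Choose $r$ large enough in terms of $k$ that a wall of size $r$ contains $k$ pairwise vertex-disjoint bricks (e.g.\ $r=3k$), and set $N:=f_{\ref{primeWall}}(m,r,1)$ (the last argument is immaterial, since the dichotomy is discarded). By the Wall Theorem there is a threshold $w(m,k)$ such that if the tree-width of $G$ is at least $w(m,k)$ then $G$ contains a wall of size at least $N$, whence Theorem~\ref{primeWall} provides a zero wall $W_0$ of size at least $r$ in $G$. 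Since every path of $W_0$ between branch vertices has weight $0$, every cycle contained in $W_0$ is a zero cycle; and $W_0$ contains $k$ pairwise vertex-disjoint cycles (say $k$ of its bricks). So such a $G$ already contains $k$ disjoint zero cycles, and we are done.

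If instead the tree-width of $G$ is smaller than $w(m,k)$, I would run the textbook bounded-tree-width packing argument by induction on $k$. Given a tree-decomposition of $G$ of width $w$, root it, let $V_t$ be the set of vertices lying in some bag of the subtree at $t$, and write $B_t$ for the bag at $t$ (so $|B_t|\le w+1$); choose $t$ of maximum depth such that $G[V_t]$ contains a zero cycle (if no such $t$ exists, $G$ has no zero cycle and $\emptyset$ suffices). As a zero cycle is connected, no piece $G[V_s]$ for $s$ a child of $t$ contains a zero cycle, and since $B_t$ separates these pieces from one another and from the rest of $G$, every zero cycle of $G$ contained in $V_t$ meets $B_t$. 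Moreover any disjoint packing of zero cycles in $G-V_t$ can be extended by a zero cycle inside $V_t$, so $G-V_t$ (of tree-width $\le w$) has no $k-1$ disjoint zero cycles; by the inductive hypothesis it has a hitting set $H'$ of size at most $(w+1)(k-2)$, and then $H'\cup B_t$ meets every zero cycle of $G$ (those inside $V_t$ via $B_t$, those crossing into $V(G)\sm V_t$ via $B_t$, and those avoiding $V_t$ via $H'$) and has size at most $(w+1)(k-1)$. Taking $w=w(m,k)$ gives $f(k):=(w(m,k)+1)(k-1)$, settling the finite case.

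For an arbitrary Abelian group $\Gamma$, the weight of a cycle depends only on the subgroup of $\Gamma$ generated by the edge labels of the graph at hand, so $\Gamma$ may be taken finitely generated and hence residually finite; since a fixed graph realizes only finitely many cycle-weights, one passes to a finite quotient $\bar\Gamma$ of $\Gamma$ under which the graph has precisely the same zero cycles and applies the finite case. The one point that requires additional care is to make the final hitting-set function depend on $\Gamma$ alone — equivalently, to handle the torsion-free part of $\Gamma$ directly. Beyond that I foresee no real obstacle: the whole deduction rests only on the first conclusion of Theorem~\ref{primeWall}, the Wall Theorem, and a standard packing lemma on bounded-tree-width graphs — which is exactly why the abstract can advertise this as an immediate consequence of the structural result, the hard work being entirely inside Theorem~\ref{primeWall} itself.
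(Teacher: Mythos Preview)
Your finite-$\Gamma$ argument is correct and matches the paper's in spirit: both reduce to producing a large zero wall and reading off $k$ disjoint zero cycles from its bricks. The paper reaches the wall via its tangle apparatus (an analogue of Lemma~\ref{largetanglem} for zero cycles, then Theorem~\ref{tangleTOwall}), whereas you use the tree-width form of the Wall Theorem together with the standard bounded-tree-width packing lemma; the two routes are interchangeable here and yours is arguably the more elementary. One cosmetic point: you invoke Theorem~\ref{primeWall} but use only the existence of the zero wall, which is exactly Theorem~\ref{ThomZero}; the paper's own sketch in the Conclusion likewise cites only Theorem~\ref{ThomZero}, not the full dichotomy.

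For infinite $\Gamma$ your residual-finiteness reduction does not yield a uniform bound: the finite quotient $\bar\Gamma$ you pass to depends on the cycle-weights realised in the particular graph $G$, so $|\bar\Gamma|$ --- and with it the hitting-set size coming from the finite case --- depends on $G$, which is precisely what the \EPP\ forbids. You flag this yourself, and the paper does no better: despite the phrasing of the statement in the introduction, the result is restated and argued only for finite $\Gamma$ in the Conclusion, and the key tool Theorem~\ref{ThomZero} assumes $|\Gamma|<\infty$. So on the infinite case your proposal is incomplete for a genuine reason, but it delivers exactly as much as the paper's own proof does.
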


Again this is in contrast to a result by Wollan on non-zero cycles.
He proved that the non-zero cycles have the \EPP\ if and only if $\Gamma$ is an Abelian group that does not contain any elements of order $2$ \cite{Wol11}.

As an aside, there is an edge-version of the \EPP; just replace all occurences of "vertex/vertices" by "edge/edges". 
It is quite a bit harder and this property seems to break down more easily.
It is still true that $A$-paths have the edge-\EPP \cite{Mad78}.
However, neither $A$-paths of any fixed weight $\gamma$ nor non-zero $A$-paths have the \EPP\ for any choice of $\Gamma$.
For a large summary of classes of graph that have the vertex-/edge-\EPP\ (or do not) see \cite{BHJ18b} and \cite{RaymondWeb}.

\section{Premliminaries}
In this paper I will use the notation of Diestel \cite{diestelBook17}.
For the rest of the paper let $\Gamma$ be a fixed Abelian group.
Let $G$ be a graph.
A mapping $\gamma:\Gamma\to E(G)$ is a \emph{$\Gamma$-labelling} in $G$. If there is a $\Gamma$-labelling in a graph, we say that this graph is $\Gamma$-labelled. Whenever a graph is mentioned, it is assumed to be $\Gamma$-labelled.
The \emph{weight} of an edge $e\in E(G)$ is $\gamma (e)$.

Let $A\subseteq V(G)$, we write $G[A]$ for the induced subgraph of $G$ on $A$. We denote by $G-A$ the subgraph $G[V(G)\setminus A]$.
Furthermore, if $H$ is a graph,
we write $G-H$ for $G-V(H)$.

\subsection{Groups}
The groups in this paper are additive
with neutral element $0$.
For a positive integer $n$, we define $ny:=\sum_{i=1}^n y$
For a negative integer, we define $ny:=|n|(-y)$.
Lastly, we define $0\cdot y:=0$.
The \emph{order} of an element $x\in\Gamma$ is the smallest positive integer such that $nx=0$. If there is no such $n$, we say that the order of $x$ is \emph{infinite}. Note that this may only happen if $\Gamma$ is infinite.
We denote by $\Gamma_2$ the subgroup of $\Gamma$ that contains all elements of $\Gamma$ of order at most $2$. Hence, $\Gamma_2$ contains exactly the elements $\gamma\in\Gamma$ such that $2\gamma=0$.

We denote by $\Z_m$ the group with elements $\{0,\ldots,m-1\}$ and with binary operation $x+y=x+y\ \m$. 

\subsection{Paths}
The \emph{weight} of a path is the sum of the weights of its edges.
A path is \emph{trivial} if it is edgeless, which means it contains exactly one vertex, otherwise it is \emph{non-trivial}. We define the weight of a trivial path to be $0\in\Gamma$. A path is \emph{zero} (with respect to $\Gamma$) if its weight is $0\in\Gamma$ and \emph{non-zero} (with respect to $\Gamma$) otherwise. 
For $\gamma\in\Gamma$, we define a \emph{$\gamma$-path} as a path of weight $\gamma$.

For a path $P$ that contains two vertices $a$ and $b$ we define $aPb$ as the subpath of $P$ between $a$ and $b$. 
If additionally another path $Q$ contains $b$ and one more vertex $c$, then $aPbQc$ is the graph that we obtain by merging $aPb$ and $bQc$. Whenever we use this notation, the resulting graph will be a path.

The \emph{interior} of a path are all vertices except its endvertices.
Let $A$ be a set of vertices in a graph. We define an \emph{$A$-path} as a non-trivial path with both endvertices in $A$ that does not contain any vertices of $A$ in its interior. An \emph{$A$-$B$-path or a path from $A$ to $B$} is a path with one endvertex in $A$ and the other in $B$, while its interior is disjoint from both $A$ and $B$.

The following theorem by Wollan on non-zero $A$-paths was already mentioned in the introduction:

\begin{theorem}[Wollan \cite{Wol10}]\label{Wollan}
The non-zero $A$-paths have the \EPP\ with hitting set function $f_{\ref{Wollan}}(k)=50k^4$.
\end{theorem}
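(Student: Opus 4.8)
The plan is to prove the statement by induction on $k$, broadly following the known arguments for packing non-zero $A$-paths (cf.\ \cite{Wol10}). The base case $k=1$ is immediate: if $G$ contains no non-zero $A$-path at all, then $\es$ is a hitting set. For the inductive step I would first apply the standard reductions — assume $G$ is connected, delete every vertex that lies on no non-zero $A$-path, and suppress irrelevant edges — and keep at hand the basic \emph{shifting} move: changing the ``potential'' at a vertex $v\notin A$ alters the weight of no $A$-path, but can be used to renormalise the $\Gamma$-labelling along any prescribed family of pairwise vertex-disjoint paths so that each such path carries all of its weight on a single edge. In particular a family of disjoint \emph{zero} $A$-paths can be renormalised so that each of them has all edges of weight $0$.

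The second ingredient is Gallai's theorem for ordinary $A$-paths \cite{Gal61}, applied with a parameter $N=N(k)$ that is polynomial in $k$. Either there is a set $Y$ of at most $2N-2$ vertices meeting every $A$-path — then $Y$ also meets every non-zero $A$-path, and we are done as soon as $2N-2\le f_{\ref{Wollan}}(k)$ — or $G$ contains $N$ pairwise disjoint $A$-paths $P_1,\dots,P_N$. Since $G$ has no $k$ disjoint non-zero $A$-paths, at most $k-1$ of the $P_i$ are non-zero, so a subfamily $\cP_0$ of at least $N-k+1$ of them consists of zero $A$-paths; renormalise as above so that every path in $\cP_0$ has only $0$-weighted edges.

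Now the task is to turn $\cP_0$ together with the surrounding connectivity into one further non-zero $A$-path disjoint from the (at most $k-1$) non-zero $P_i$, which would give $k$ disjoint non-zero $A$-paths and a contradiction. A path $R$ running from the interior of some $P\in\cP_0$ to the interior of some $P'\in\cP_0$ and internally disjoint from all the $P_i$ yields an $A$-path by splicing on zero-weight prefixes of $P$ and $P'$, and after renormalisation this spliced path has weight exactly the renormalised weight of $R$, a quantity depending on $R$ and on the chosen attachment points. Either one can choose such an $R$ of non-zero weight — and, via a Menger-type argument on the interiors of the members of $\cP_0$, enough pairwise internally disjoint such $R$'s to force the contradiction — or a vertex set $Z$ of size bounded in terms of $k$ separates the interiors of $\cP_0$ and simultaneously witnesses that off a bounded set all the relevant connecting paths are zero; combining $Z$ with the vertices needed to handle the at most $k-1$ non-zero $P_i$ and one application of the induction hypothesis to the remaining graph produces the hitting set. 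Bookkeeping of the losses ($2N-2$ from Gallai, $O(k)$ paths set aside, $|Z|$ polynomial in $k$) gives a polynomial hitting-set function, which with some care can be brought down to $50k^4$.

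The step I expect to be the main obstacle is exactly this conversion: showing that if no further non-zero $A$-path can be extracted from the enormous family $\cP_0$, then the graph is structurally so constrained that a small hitting set must exist. Making the renormalised connectors non-zero is where the group has to cooperate — for instance over $\Z_2$ there is less freedom, and one may be forced to route a connector through three or more members of $\cP_0$ rather than two, or to branch on whether certain partial weights lie in $\Gamma_2$ — and doing this while keeping enough connectors pairwise internally disjoint is the technical heart of the theorem, which is why such a proof is considerably longer than Gallai's original argument for plain $A$-paths.
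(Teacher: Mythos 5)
This statement is not proved in the paper at all; it is imported verbatim from Wollan's work \cite{Wol10}, so there is no ``paper proof'' to compare against. What you have written is a sketch of how one might prove Wollan's theorem from scratch, and as a sketch it has a genuine and, by your own admission, unresolved gap at its centre.

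The difficulty is precisely the dichotomy you label as ``the main obstacle'': given a huge family $\cP_0$ of pairwise disjoint zero $A$-paths (renormalised so that each member of $\cP_0$ carries only $0$-weight edges), you assert that either one can extract enough internally disjoint non-zero connectors between interiors of members of $\cP_0$ to produce $k$ disjoint non-zero $A$-paths, or a bounded set $Z$ separates these interiors and witnesses that all relevant connectors are zero. You do not prove this dichotomy; you describe it. But this dichotomy \emph{is} the theorem. The preliminary reductions you list (connectivity, deleting useless vertices, potential-shifting along disjoint paths, Gallai's theorem to either win immediately or obtain $N$ disjoint $A$-paths of which most are zero) are standard and correct, but by themselves they only set the stage. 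The hard part is to show that from a linked system of zero $A$-paths one can always either route a non-zero $A$-path or exhibit a small separator, and that this can be iterated $k$ times without the hitting set blowing up. Nothing in the proposal establishes why a single failed extraction attempt yields a cut of size polynomial in $k$, nor how the induction on $k$ is wired so that the bookkeeping closes to $50k^4$; the final sentence ``with some care can be brought down to $50k^4$'' is an acknowledgement of this rather than an argument.

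A second, subsidiary concern: your renormalisation step is only valid if the family you renormalise along consists of \emph{pairwise vertex-disjoint} paths that avoid $A$ except at their endpoints, which is fine for $\cP_0$ itself, but once you start splicing connectors through interiors of several $P\in\cP_0$ (as you suggest may be necessary over $\Gamma_2$) the coordinate change at shared vertices has to be tracked carefully, and the claim that the spliced path's weight is ``exactly the renormalised weight of $R$'' needs a precise statement and proof, not just a plausibility argument. Wollan's own proof is organised quite differently — it works with a careful structural decomposition rather than an induction of the shape you propose — precisely because naively iterating ``renormalise, connect, separate'' does not obviously terminate with the claimed bound. In short, the outline is plausible as a research plan but does not constitute a proof.
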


Let $A$ and $B$ be two vertex sets in a graph $G$. An \emph{$A$-$A$-$B$-path} is a path with one endvertex in $A$, the other in $B$ and one vertex of $A$ in its interior. It may contain more vertices of $A$ or $B$ in its interior. The characterization of Bruhn et al. in \cite{BJU20}, implies the following proposition.
\begin{proposition}[Bruhn et al. \cite{BJU20}]\label{AABpath}
$A-A-B$-paths have the \EPP\ with hitting set function $f_{\ref{AABpath}}$.
\end{proposition}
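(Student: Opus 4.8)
The plan is to reduce the statement to a form where we can directly invoke the characterization of Bruhn et al.\ in~\cite{BJU20}. First I would recall precisely what that characterization gives: it describes, for a fixed pattern of "constrained" $A$-paths (or more generally a suitable family of paths with endpoint and interior conditions), exactly when the \EPP\ holds, together with an explicit hitting set function. The task is therefore to check that $A$-$A$-$B$-paths fit the framework of~\cite{BJU20} and that the relevant side condition in their characterization is satisfied, so that the \EPP\ holds with some function $f_{\ref{AABpath}}$.

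The key reduction I would carry out is the following gadget construction. Given $G$ with vertex sets $A$ and $B$, build an auxiliary graph $G'$ by adding a new apex-like vertex (or a small controlled gadget) so that $A$-$A$-$B$-paths in $G$ correspond to paths in $G'$ of the type handled by~\cite{BJU20}. Concretely, an $A$-$A$-$B$-path is a path starting in $A$, passing through a second vertex of $A$, and ending in $B$; I would split this at the interior $A$-vertex into an $A$-path followed by an $A$-$B$-path, and encode the concatenation requirement by identifying endpoints or routing through a gadget vertex. One must check the two directions: (i) a packing of $k$ disjoint $A$-$A$-$B$-paths in $G$ yields a corresponding packing in $G'$, and (ii) a small hitting set in $G'$ pulls back to a small hitting set in $G$ (deleting the gadget vertices costs only a constant, which gets absorbed into $f_{\ref{AABpath}}$). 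Care is needed because an $A$-$A$-$B$-path may revisit $A$ or $B$ many times in its interior; so I would argue that it suffices to find a subpath that is still an $A$-$A$-$B$-path but is "clean" (visits $A$ exactly once in the interior and $B$ only at its end), and that any hitting set for clean such paths also hits all $A$-$A$-$B$-paths, since every $A$-$A$-$B$-path contains a clean one as a subpath.

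The step I expect to be the main obstacle is verifying that the side condition of the~\cite{BJU20} characterization actually holds for the family produced by the gadget — i.e.\ that this particular endpoint/interior pattern falls on the "has the \EPP" side rather than an exceptional non-\EPP\ case, and chasing through their definitions to confirm that the obstruction patterns they rule out do not arise here. A secondary nuisance is bookkeeping: making sure the gadget vertices are counted correctly in both the packing and the covering direction, and that the resulting $f_{\ref{AABpath}}$ is a genuine function of $k$ only (the gadget has bounded size independent of $G$ and $k$, so this should go through). Once the pattern is matched and the side condition checked, the proposition follows immediately by applying~\cite{BJU20} to $G'$ and translating back.
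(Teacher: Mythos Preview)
The paper does not give a proof of this proposition at all; it simply records it in one sentence as a consequence of the characterization in~\cite{BJU20} and moves on. So there is nothing to compare your argument to on the paper's side --- the paper treats $A$-$A$-$B$-paths as falling directly under the scope of~\cite{BJU20}, with no auxiliary construction and nothing further to verify. Your proposal, by contrast, tries to reconstruct how the implication might go via a gadget graph; whether any such gadget is needed depends entirely on the precise statement of~\cite{BJU20}, which you are (by your own admission) guessing at.

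Independently of that, your sketch contains a concrete gap. You assert that every $A$-$A$-$B$-path contains a ``clean'' $A$-$A$-$B$-subpath (exactly one interior $A$-vertex, no interior $B$-vertex), so that a hitting set for the clean paths hits all of them. This is false. Take the four-vertex path $a_1\,b_1\,a_2\,b_2$ with $a_1,a_2\in A$, $b_1,b_2\in B$ and $A\cap B=\emptyset$: this is an $A$-$A$-$B$-path, but its only $A$-$A$-$B$-subpath is itself, and it has $b_1$ as an interior $B$-vertex, hence is not clean. Thus a cover of the clean paths need not cover this one, and the covering direction of your reduction breaks. A second worry is the packing direction: routing all $k$ paths through a single ``apex-like vertex'' cannot yield $k$ disjoint objects in $G'$, so the gadget would have to be more elaborate than what you describe. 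Both issues may be fixable, but the fix is precisely the content one has to read off from~\cite{BJU20}, which is what the paper defers to rather than reproving.
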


The next lemma states that if there are many disjoint paths from vertex sets $A$ and $B$ each to a vertex set $X$, then we can also simultaneously find many disjoint paths from $A$ and $B$ to $X$.
\begin{lemma}[Bruhn and U. \cite{BU18}]\label{twotypespaths}
Let $G$ be a graph and let $A,B$ and $X$ be vertex sets in $G$.
If there is a set $\cQ$ of $2t$ disjoint $A$-$X$-paths and a set $\cR$ of $t$ disjoint $B$-$X$-paths in $G$, then there are $2t$ disjoint paths $P_1,\ldots,P_{2t}$ such that $P_1,\ldots P_t\subseteq \cQ$, which means they are $A$-$X$-paths, and $P_{t+1},\ldots,P_{2t}$ are $B$-$X$-paths with $P_i\subseteq \bigcup_{Q\in\cQ}Q \cup \bigcup_{R\in\cR}R$ for $i\in[2t]$. 
\end{lemma}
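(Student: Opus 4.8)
Everything in this statement is about paths only; the $\Gamma$-labels and the surrounding tangle machinery play no role. I would work entirely inside the subgraph
$H:=\bigcup_{Q\in\cQ}Q\cup\bigcup_{R\in\cR}R$, which already contains $\cQ$ as $2t$ disjoint $A$-$X$-paths and $\cR$ as $t$ disjoint $B$-$X$-paths. The plan rests on the following reduction: \emph{it suffices to find a set $\cS$ of $t$ pairwise disjoint $B$-$X$-paths, all contained in $H$, such that $\bigcup\cS$ meets at most $t$ of the paths in $\cQ$.} Indeed, given such an $\cS$, at least $2t-t=t$ members of $\cQ$ are disjoint from $\bigcup\cS$; taking $t$ of them as $P_1,\dots,P_t$ and the paths of $\cS$ as $P_{t+1},\dots,P_{2t}$ yields $2t$ pairwise disjoint paths of the required form, since the $P_i$ with $i\le t$ are members of $\cQ$ (hence $A$-$X$-paths inside $\bigcup\cQ$) and the $P_i$ with $i>t$ are $B$-$X$-paths inside $H=\bigcup\cQ\cup\bigcup\cR$.

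To produce $\cS$ I would shortcut the $\cR$-paths onto the tails of the $\cQ$-paths. For each $R\in\cR$ follow $R$ from its $B$-endpoint until it first meets $V(\bigcup\cQ)$, say at a vertex $w_R$ lying on $Q_{\sigma(R)}\in\cQ$, and then continue along $Q_{\sigma(R)}$ towards its $X$-endpoint; after the usual truncations making the interior avoid $B$ and $X$, this gives a $B$-$X$-path $S_R\subseteq H$ meeting only the single $\cQ$-path $Q_{\sigma(R)}$. If the assignment $R\mapsto\sigma(R)$ is injective, the paths $\{S_R:R\in\cR\}$ are pairwise disjoint — their $R$-prefixes are disjoint because $\cR$ is, their $\cQ$-tails lie on distinct members of the disjoint family $\cQ$, and an $R$-prefix re-enters $V(\bigcup\cQ)$ only at its own vertex $w_R$ — and collectively they meet exactly the $t$ paths $Q_{\sigma(R)}$, so the reduction finishes the proof.

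The difficulty, and what I expect to be the main obstacle, is when two $\cR$-paths first meet the \emph{same} member of $\cQ$: then $\sigma$ is not injective and the two $\cQ$-tails overlap. Here I would replace the one-shot construction by an extremal/augmenting-path argument: among all families $\cS$ of $t$ pairwise disjoint $B$-$X$-paths contained in $H$ (such families exist since $\cR$ is one), choose one for which the number of $\cQ$-paths met by $\bigcup\cS$ is minimum, with ties broken by a suitable secondary measure (for instance the number of maximal subpaths of members of $\cS$ lying in $V(\bigcup\cQ)$). If $\bigcup\cS$ met more than $t$ members of $\cQ$, then, since $|\cS|=t$, some $S\in\cS$ meets at least two of them, and shortcutting $S$ at its first entry into $\bigcup\cQ$ and rerouting along the resulting $\cQ$-tail — rerouting further along the alternating structure formed by $\cS$ and $\cQ$ whenever that tail would collide with another member of $\cS$ — should yield an admissible family strictly smaller in the chosen measure, a contradiction. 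The delicate point is precisely this last step: verifying that the reroute can be performed leaving the rest of $\cS$ intact while strictly decreasing the potential, which is a genuine augmenting-path argument rather than a single local exchange. (For the weaker conclusion that $H$ merely contains $t$ disjoint $A$-$X$-paths together with $t$ disjoint $B$-$X$-paths, all $2t$ mutually disjoint, one can instead run one max-flow computation on the standard vertex-split digraph of $H$, with super-sources for $A$ and for $B$ of capacity $t$ each and a super-sink for $X$: every cut has capacity at least $2t$ by casework on which super-source arc it severs, using the $2t$ disjoint $A$-$X$-paths of $\cQ$ in one case and the $t$ disjoint $B$-$X$-paths of $\cR$ in the other; but making the $A$-$X$-paths \emph{intact} members of $\cQ$ still needs the extremal step above.)
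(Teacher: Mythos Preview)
Your construction --- follow each $R\in\cR$ from its $B$-end until it first meets $\bigcup\cQ$, then switch onto the tail of that $\cQ$-path toward $X$ --- is precisely the scheme the paper sketches (the full proof is in~\cite{BU18}). The divergence is in how collisions are handled. Rather than optimising over all families $\cS$ of $t$ disjoint $B$--$X$-paths in $H$, the approach in~\cite{BU18} keeps the $\cR$-prefixes throughout: when two derived paths would overlap on a common $\cQ$-tail, one of them has its switching point pushed further along its own $R$, and this is iterated. Because each push strictly lengthens some $\cR$-prefix, termination is immediate, and the prefixes never interfere with one another since they live on the pairwise disjoint family $\cR$; the cascading-reroute difficulty that you rightly flag as the delicate point of your extremal argument is thereby confined to the $\cQ$-tails and handled by this monotone potential. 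More consequentially for the present paper, every resulting $B$--$X$-path $P_{t+i}$ then begins with an initial segment of some member of $\cR$. The paper records this explicitly in the paragraph following the lemma and relies on it in the proof of Lemma~\ref{many-x-paths} to control weights via shared starting segments. Your extremal argument over arbitrary $\cS\subseteq H$, even once the augmenting step is made rigorous, would prove the lemma as stated but would not deliver this additional structure.
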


The way the proof works, is that we follow the paths in $\cR$ from $B$ until the first time they intersect paths in $\cQ$ and then we try to reroute the paths in $\cR$ along the paths in $\cQ$. If we cannot do that for all paths in $\cR$ at the same time (because the rerouted paths would intersect), we just follow some paths in $\cR$ a little bit longer until that is possible. This means that if we know that the paths in $\cR$, starting in $B$, intersect a path in $\cQ$ only after specific vertices, then the paths $P_{t+1},\ldots,P_{2t}$ will contain the subpaths of paths in $\cR$ from $B$ to these specific vertices. This will come in handy later.

\subsection{Walls and Linkages}
An \emph{elementary wall} $W$ of size $n\times m$ is the graph with vertex set 
$$\{v_{i,j}:i\in[n+1],j\in [2m+2]\}$$ 
\begin{center}
and edge set
\end{center}  
$$\{v_{i,j}v_{i',j'}: i=i' \textrm{ and } |j-j'|=1 \textrm{ or } i=i'-1 \textrm{ and } j=j' \textrm{ has the same parity as $i$}\}$$
where we then remove all vertices of degree $1$, that is $v_{1,2m+2}$ and if $n$ is odd $v_{n+1,1}$ or if $n$ is even $v_{n+1,2m+2}$. If $n=m$, we say that $W$ is a wall of size $n$ or an $n$-wall.
The \emph{$i^{th}$ row} of $W$ is the induced subgraph on the vertex set $\{v_{i,j}:j\in[2m+1]\}$ (without the degree $1$ vertices that were removed). We call the first row of $W$ the top row.
We can define a natural order in the top row where $v_{1,1}$ is the first vertex and for two vertices $v$ and $w$ in the top row, we denote by $v<w$ that $v$ comes before $w$.
There is a unique set of $n+1$ disjoint paths from the top row to the $(n+1)^{st}$ row. We call these paths \emph{columns} of $W$. We order the columns according to the order of their endvertex in the top row; the \emph{$j^{th}$ column} of $W$ is the column with the $j^{th}$ endvertex in the top row.
The \emph{distance} of the $i^{th}$ row and the $j^{th}$ row is $|j-i|$. Analogously, define the distance between two columns.
The \emph{nails} of $W$ are all the degree $2$ vertices in the top row of $W$ except for the first and last one, that is $v_{1,1}$ and $v_{1,2m+1}$. The \emph{branch vertices} of $W$ are the vertices of degree $3$. Every branch vertex lies in a row and a column and at most two lie in the same row and column. 

A \emph{wall} is a subdivision of an elementary wall. 
All the definitions for elementary walls except nails can be extended to walls in a natural way. We will give a natural extension of the definition for nails shortly.
A \emph{subdivided edge} in a wall with branch vertices $B$ is a $B$-path; it does not contain vertices of $B$ in its interior. 
Observe that a subdivided edge contains only vertices of degree $2$. We say that a wall is \emph{zero} if each subdivided edge is zero. In particular, this implies that all paths inside a zero wall between its branch vertices are zero. 

When we say that we \emph{remove a row or column}, we mean that we remove all vertices and edges that lie in said row or column but which do not lie in a column or row respectively. If we remove the first or last row or column, also iteratively remove all vertices of degree $1$. In any case, the resulting graph is a wall with one less row/column.

A \emph{subwall} $W_0$ of a wall $W$ is a wall such that each row of $W_0$ is a subset of one row of $W$ and each column of $W_0$ a subset of one column of $W$. The subwall $W_0$ is \emph{$t$-contained} in $W$ if it does not intersect the first and last $t$ rows and columns of $W$. If $W_0$ is a subwall that is at least $1$-contained in a wall $W$, we can define the nails of $W_0$ in a natural way: they are branch vertices in $W$. 
If this is not the case, choose any one degree $2$ vertex in each subdivided edge in the top row of $W_0$ and let these be the nails of $W_0$.

Let $W_0$ be a subwall of a wall $W$ and for any branch vertex $v$ of $W_0$, let $R_v$ be the row and $C_v$ the column of $W$ that contains $v$. A cycle $C$ in $W$ \emph{encapsulates $W_0$} if $W_0$ and $C$ are disjoint and if for every branch vertex $v$ of $W_0$, there are two disjoint paths from $v$ to $C$ in $C_v$ and two disjoint paths from $v$ to $C$ in $R_v$. 

A \emph{linkage} $L$ for a wall $W$ with nails $N$ is a set of disjoint $N$-paths in $G-(W-N)$. 
We say that a linkage is \emph{non-zero} if each path in it is non-zero.
For each linkage path with endvertices $v$ and $w$ such that $v<w$, we define $v$ as the \emph{left endvertex} and $w$ as the \emph{right endvertex} of that path.

Let $L=\{P_1,\ldots,P_n\}$ be a
linkage for a wall $W$ where $P_1,\ldots, P_n$ are ordered according to their left endvertex in the top row of $W$. For $i\in\{1,\ldots,n\}$ let $\ell_i$ and $r_i$ be the left and right endvertex of $P_i$ respectively. The linkage $L$ is \emph{in series} if for each $i<j$, it holds that $\ell_i<r_i<\ell_j<r_j$. It is \emph{nested} if for each $i<j$, $\ell_i<\ell_j<r_j<r_i$. Lastly, it is \emph{crossing} if for each $i<j$, $\ell_i<\ell_j<r_i<r_j$.
A linkage is \emph{pure} if it is either in series, nested or crossing.
Every linkage contains a subset that is a pure linkage.

\begin{lemma}[Huynh et al. \cite{HJW16}]\label{pureLink}
Let $L$ be a linkage for a wall. There is a subset $L'\subseteq L$ that is a pure linkage.
\end{lemma}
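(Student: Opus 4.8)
The plan is to ignore the wall completely and reason only about the pattern in which the linkage paths attach to the linearly ordered nails. Write $L=\{P_1,\dots,P_n\}$, let $\ell_i<r_i$ be the left and right endvertex of $P_i$, and index so that $\ell_1<\ell_2<\dots<\ell_n$ in the top-row order. Because the paths of a linkage are pairwise disjoint, all $2n$ endvertices are distinct nails; hence for $i<j$ we have $\ell_i<\ell_j$ and $\ell_j<r_j$, so $r_i$ lies in exactly one of the positions $r_i<\ell_j$, or $\ell_j<r_i<r_j$, or $r_i>r_j$ --- and these are precisely the relations ``$P_i,P_j$ in series'', ``$P_i,P_j$ crossing'', ``$P_j$ nested in $P_i$''. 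Viewing each $P_i$ as the arc $[\ell_i,r_i]$ on the line of nails, the task becomes: among $n$ arcs with $2n$ distinct endpoints, find a large subfamily that is pairwise disjoint, pairwise nested, or pairwise crossing; re-indexing such a family by left endpoint then exhibits it as an in-series, nested, or crossing --- hence pure --- sub-linkage.

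First I would peel off the in-series case. Define a strict partial order on the arcs by $P_i\prec P_j$ iff $r_i<\ell_j$, that is, $P_i$ lies entirely to the left of $P_j$; this is transitive because $\ell_j<r_j$ always, and irreflexive trivially. A chain of $\prec$ is exactly an in-series sub-linkage. An antichain is a set of arcs in which no one lies entirely to the left of another; since the arc with the smaller left endpoint automatically does not end to the left of the other's start, this is exactly a set in which every pair is nested or crossing. By the standard corollary of Dilworth's (or Mirsky's) theorem, $n$ is at most the product of the longest chain length and the largest antichain size, so either $L$ has an in-series sub-linkage of size at least $\sqrt n$ --- and then we are done --- or there is $L_1\subseteq L$ with $|L_1|\ge\sqrt n$ every pair of which is nested or crossing.

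Second, inside $L_1$ I would split nested from crossing in the same way: put $P_i\prec' P_j$ iff $P_j$ is nested in $P_i$, again a strict partial order, with transitivity immediate from the nesting inequalities. A chain of $\prec'$ is a nested sub-linkage, and an antichain of $\prec'$ is a set of arcs of $L_1$ no two of which are nested --- hence, since no two are in series either, a crossing sub-linkage. A second application of Dilworth yields a nested or crossing sub-linkage $L'$ with $|L'|\ge\sqrt{|L_1|}\ge n^{1/4}$. (This second step is just the Erd\H{o}s--Szekeres theorem in disguise, applied to the sequence of right endpoints read off in the order of the left endpoints.) In every case we obtain a pure sub-linkage, and in fact one of size at least $\lceil n^{1/4}\rceil$.

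I do not expect a real obstacle here. The two points that need care are: checking that the three relations genuinely exhaust all pairs of paths --- which is exactly where disjointness of the linkage is used --- and checking that ``entirely to the left of'' and ``nested in'' are transitive, so that Dilworth applies. One further remark: the statement as phrased would already follow from a single use of the three-colour Ramsey theorem, but that only gives a pure sub-linkage of logarithmic size; the two-step partial-order argument is what keeps the bound polynomial in $|L|$, which the quantitative downstream applications in this paper need. This is, in essence, the proof of Huynh et al.~\cite{HJW16}.
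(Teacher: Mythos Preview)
Your argument is correct. The paper does not give its own proof of this lemma; it is cited from Huynh, Joos and Wollan~\cite{HJW16} and used as a black box, so there is no in-paper proof to compare against. Your two successive applications of Dilworth/Mirsky are sound: ``in series'' and ``nested in'' are genuine strict partial orders, their antichains are exactly what you claim, and the trichotomy series/nested/crossing for every pair is precisely where disjointness of the linkage paths is used.

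One quantitative point is worth flagging. Your two-step argument produces a pure sub-linkage of size $\lceil |L|^{1/4}\rceil$, but the paper invokes the lemma (inside the proof of Theorem~\ref{primeWall}) to pass from a linkage of size $s^{3}$ to a pure one of size~$s$, so it is implicitly using the bound $|L'|\ge |L|^{1/3}$ from~\cite{HJW16}. That stronger bound comes from a single pigeonhole rather than two nested Dilworth steps: order by left endpoint, let $n_i$ be the length of a longest nested chain with $P_i$ innermost, and note that paths sharing the same $n_i$-value are pairwise non-nested; if all $n_i\le |L|^{1/3}$ some value is hit $\ge |L|^{2/3}$ times, and among those paths a second label $s_i$ (longest in-series chain ending at $P_i$) finishes the job, again by pigeonhole, since paths with equal $(n_i,s_i)$ must be pairwise crossing. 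For the lemma as literally stated your $|L|^{1/4}$ is of course enough, but if you want the constants downstream to match the paper you should either sharpen the exponent to $1/3$ in this way or change $s^{3}$ to $s^{4}$ where the lemma is applied. Your closing remark that this ``is, in essence, the proof of Huynh et al.'' is therefore slightly off: the idea is the same, but their argument is arranged to avoid the loss from iterating Dilworth.
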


\subsection{Tangles}
We use the notion of tangles for the proof of the main theorem.
Let $G$ be a graph. We say that an ordered tuple $(C,D)$ is a \emph{separation} in $G$ if $C$ and $D$ are edge-disjoint subgraphs of $G$ such that $C\cup D=G$. The order of the separation $(C,D)$ is $|V(C\cap D)|$. A \emph{tangle} $\cT$ of order $t$ is a set of separations such that 
\begin{enumerate}
\item for every separation $(C,D)$ of order at most $t-1$, either $(C,D)\in \cT$ or $(D,C)\in\cT$ but not both
\item $V(C)\neq V(G)$ for all $(C,D)\in\cT$
\item $C_1\cup C_2\cup C_3\neq G$ for all $(C_1,D_1),(C_2,D_2),(C_3,D_3)\in\cT$
\end{enumerate}

We call $C$ and $D$ the \emph{sides} of $(C,D)$.
If $(C,D)\in\cT$, then we say that $D$ is the \emph{large side} and $C$ the \emph{small side} of the separation $(C,D)$ in $\cT$.
A tangle $\cT_0$ is a \emph{truncation} of a tangle $\cT$ if $\cT_0\subseteq \cT$.

Each wall $W$ of size $n$ induces a tangle $\cT_W$ of order $n+1$.
For each separation $(C,D)$ of order at most $n$, there is exactly one side that contains a whole row of $W$. We define that side as the large side, that is if $D$ contains a whole row of $W$, then $(C,D)\in\cT_W$ otherwise $(D,C)\in\cT_W$. That this really defines a tangle has been shown in \cite{RS91}. 
By applying the definitions, one can quite easily see that if $W_0$ is a subwall of $W$, then $\cT_{W_0}$ is a truncation of $\cT_W$. Robertson and Seymour have shown that a large tangle $\cT$ in a graph $G$ implies that there is a large wall in $G$. Furthermore, the tangle induced by the wall is a truncation of $\cT$.

\begin{theorem}[Robertson and Seymour \cite{RS91}]\label{tangleTOwall}
Let $G$ be a graph. For every positive integer $t$, there is an integer $f_{\ref{tangleTOwall}}(t)$ such that if $\cT$ is a tangle of order $f_{\ref{tangleTOwall}}(t)$ in $G$, then there is a wall $W$ of size $t$ in $G$ such that $\cT_W$ is a truncation of $\cT$.
\end{theorem}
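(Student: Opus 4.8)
The plan is to obtain this from the Grid Theorem of Robertson and Seymour, run in a way that keeps the resulting wall ``inside'' the prescribed tangle $\cT$. Concretely, first I would use that a tangle of order $\theta$ in $G$ witnesses branch-width at least $\theta$, hence tree-width at least $\theta-1$ (branch-width never exceeds tree-width by more than one). The Grid Theorem then supplies a function $g$ such that tree-width at least $g(t')$ forces a $t'\times t'$ grid minor; choosing $t'$ a suitable constant multiple of $t$, the grid contains a wall of size $t$ as a minor, and since a wall has maximum degree $3$ a graph with a wall as a minor contains a subdivision of it, i.e.\ a wall of size $t$ as a subgraph. Composing the bounds gives a function $f_{\ref{tangleTOwall}}$.

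The catch is that this only places a wall \emph{somewhere} in $G$, whereas $\cT_W$ must refine $\cT$; a priori $G$ could carry a second tangle far away from $\cT$ whose wall the Grid Theorem argument might locate. So I would run the grid-extraction argument relative to $\cT$: each time the argument discards a part of $G$ across a separation of small order, discard the $\cT$-small side instead. Because the order of $\cT$ is far larger than $t$, the tangle axioms guarantee that the $\cT$-large side always retains ``enough graph'' to continue. The wall $W$ produced this way satisfies the extra property $(\star)$: for every separation $(C,D)\in\cT$ of order at most $t$, the large side $D$ contains an entire row of $W$. Intuitively $W$ is spread across the region that $\cT$ controls, so no $\cT$-separation of order at most $t$ can push a whole row of $W$ onto its small side.

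Granting $(\star)$, the truncation is a short pigeonhole argument. Let $(A,B)\in\cT_W$ have order at most $t$; by definition of $\cT_W$ the large side $B$ contains an entire row $R$ of $W$. As $\cT$ has order exceeding $t$, it decides $(A,B)$; suppose for contradiction that $(B,A)\in\cT$. Applying $(\star)$ to $(B,A)$, the side $A$ contains an entire row $R'$ of $W$. But $W$ has $t+1$ pairwise disjoint columns, each a path meeting both $R$ and $R'$; since $R\subseteq V(B)$, $R'\subseteq V(A)$ and $G=A\cup B$ is an edge-disjoint union of its two sides, each such column must meet the separator $V(A)\cap V(B)$, and these meeting points are distinct because the columns are disjoint. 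Hence $|V(A)\cap V(B)|\ge t+1$, contradicting that $(A,B)$ has order at most $t$. So $(A,B)\in\cT$, proving $\cT_W\subseteq\cT$, i.e.\ $\cT_W$ is a truncation of $\cT$.

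The main obstacle is that the Grid Theorem is itself a substantial theorem, which I would cite rather than reprove; the genuinely delicate point inside the present argument is carrying out the grid-extraction relative to $\cT$ so that $(\star)$ holds — this is exactly what is established in \cite{RS91}, and one may equally invoke the tangle-phrased version of the Grid Theorem directly and skip the translation into tree-width. Everything after that (the conversion of a grid minor into a wall subgraph, and the pigeonhole argument for the truncation) is routine.
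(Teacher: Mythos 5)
The paper does not prove this statement; it imports it verbatim from Robertson and Seymour \cite{RS91}, so there is no in-paper argument to compare against. Your sketch is a reasonable reconstruction of what actually lies behind the citation, and the part you work out in detail is sound: granting your property $(\star)$ (that the $\cT$-large side of every separation of order at most $t$ contains a whole row of $W$), the pigeonhole step is correct — if $(A,B)\in\cT_W$ had $(B,A)\in\cT$, then a row $R\subseteq B-A$ from the definition of $\cT_W$ and a row $R'\subseteq A-B$ from $(\star)$ would force each of the $t+1$ pairwise disjoint columns of $W$ to meet $V(A)\cap V(B)$ in a distinct vertex, contradicting the order bound.

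Two caveats, both of which you yourself flag. First, everything of substance in the proof is concentrated in $(\star)$, i.e.\ in carrying out the grid/wall extraction so that the resulting wall is ``controlled by'' $\cT$; acknowledging that this is exactly what \cite{RS91} establishes essentially collapses your argument back into the same citation the paper makes, so this is not an independent proof so much as an annotated citation. Second, the chain ``tangle $\Rightarrow$ branch-width $\Rightarrow$ tree-width $\Rightarrow$ grid minor, then relativize to $\cT$'' is historically backwards: in \cite{RS91} the tangle-controlled grid theorem is the primitive result and the tree-width version is a corollary of it, not the other way around, and it is not clear how one would ``re-inject'' tangle control into a wall obtained from the bare tree-width statement without essentially reproving the tangle version. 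You note that one can instead invoke the tangle-phrased grid theorem directly, and that is indeed the cleaner (and the correct) way to read the citation.
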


Let $f:\N\to\N$ be some function. Assume that $f$ is not a hitting set function for the zero $A$-paths. It follows that there is an integer $k$ and a graph $G$ with a vertex set $A\subseteq V(G)$ such that $G$ contains neither $k$ disjoint zero $A$-paths nor a set of at most $f(k)$ vertices that intersects all those paths. If $k$ is chosen minimum, we say that $(G,A,k)$ is a minimal counterexample to $f$ being a hitting set function for the zero $A$-paths. If we assume that our main theorem is false, we can find a minimal counterexample $(G,A,k)$ to any function being a hitting set function. The next lemma states that if $f$ is chosen large enough, we can find a large tangle in $G-A$. 

\begin{lemma}[Bruhn and U. \cite{BU18}] \label{largetanglem}
Let $(G,A,k)$ be a minimal counterexample to $f:\N\to\N$ being a hitting set function for the zero $A$-paths such that $f$ satisfies $f(k)\geq 3g(k)+2f(k-1)+10$.
Then the graph $G-A$ admits a tangle \eptan\ of order $g(k)$ such that for each 
 separation $(C,D) \in \eptan$ every zero $A$-path has to intersect~$D-C$. 
\end{lemma}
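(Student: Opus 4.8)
The plan is to prove the lemma by constructing \eptan\ directly, orienting every separation of $G-A$ of order less than $g(k)$ towards the side that ``carries'' the zero $A$-paths, and then reading off the stated covering property from this orientation. Two features of a minimal counterexample are used repeatedly: $G$ has neither $k$ disjoint zero $A$-paths nor a set of at most $f(k)$ vertices meeting all of them; and, by minimality of $k$, $f$ \emph{is} a hitting set function at parameter $k-1$, so every graph $G'$ with $A'\subseteq V(G')$ contains $k-1$ disjoint zero $A'$-paths or a set of at most $f(k-1)$ vertices meeting all of them. For a separation $(C,D)$ of $G-A$ of order less than $g(k)$, set $S=V(C)\cap V(D)$ and let $G_C$ be the subgraph of $G$ on $(A\cup V(C))\setminus S$ together with all edges of $G$ among these vertices (and $G_D$ analogously); I would put $(C,D)\in\eptan$ exactly when the zero $A$-paths of $G_C$ can be met by at most $f(k-1)$ vertices, so that $D$ is the large side. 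By the second feature, $(C,D)\notin\eptan$ forces $G_C$ to contain $k-1$ disjoint zero $A$-paths.

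Then one verifies the tangle axioms. \emph{At most one of $(C,D),(D,C)$ lies in \eptan:} otherwise an $f(k-1)$-sized hitting set on each side together with $S$ would be a set of at most $2f(k-1)+g(k)\le f(k)$ vertices meeting every zero $A$-path of $G$, since a zero $A$-path avoiding $S$ has its interior inside a single side of $(G-A)-S$ and hence is a zero $A$-path of $G_C$ or of $G_D$ --- contradicting the counterexample. \emph{At least one of them lies in \eptan:} if neither did, $G_C$ and $G_D$ each contain $k-1$ disjoint zero $A$-paths, the two families meet only in $A$ (both avoid $S$), and an exchange argument along the shared endvertices should produce $k$ disjoint zero $A$-paths of $G$, a contradiction. \emph{No side of a separation in \eptan\ is all of $G-A$:} if $V(C)=V(G-A)$ then $G_C=G-S$, so an $f(k-1)$-sized hitting set for it plus $S$ meets all zero $A$-paths with at most $f(k-1)+g(k)\le f(k)$ vertices. \emph{No three small sides cover $G-A$:} if $C_1\cup C_2\cup C_3=G-A$ with all three separations in \eptan, fix $f(k-1)$-sized hitting sets $X_i$ for $G_{C_i}$; a zero $A$-path of $G$ avoiding $S_1\cup S_2\cup S_3$ lies inside some $G_{C_i}$ and is met by $X_i$, so $X_1\cup X_2\cup X_3\cup S_1\cup S_2\cup S_3$ meets every zero $A$-path of $G$, and a careful accounting --- with the three separators in hand only two of the $X_i$ turn out to be needed --- bounds its size by $2f(k-1)+3g(k)\le f(k)$, the required contradiction. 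This last step is what the hypothesis $f(k)\ge 3g(k)+2f(k-1)+10$ is calibrated for.

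Finally, for the covering property: if $(C,D)\in\eptan$ and some zero $A$-path $P$ avoided $V(D)\setminus V(C)$ entirely, then $P$ would lie in $G[A\cup V(C)]$; one arranges the construction --- by a preliminary reduction to a counterexample in which every vertex and edge lies on a zero $A$-path, or by tightening the orientation so that the small side contains no zero $A$-path at all --- so that this is impossible, which is exactly the assertion that $P$ meets $D-C$. I expect the main obstacle to be making the orientation into a genuine tangle within the budget $f(k)$: the two pathologies of zero $A$-paths --- a zero $A$-path need not restrict to a zero $A$-path of an induced subgraph, and two zero $A$-paths cannot in general be concatenated across a separation because they may share endvertices in $A$ --- force the separators (of total size below $3g(k)$) into every combined hitting set, make the ``at least one side'' and third-axiom arguments delicate, and pin down the precise shape of the required inequality.
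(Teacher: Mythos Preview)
The paper does not prove this lemma; it simply cites~\cite{BU18} and remarks that the proof there (written for $A$-paths of length $0\bmod 4$) carries over unchanged. So there is no in-paper argument to compare against line by line. Your plan has the right overall shape, but the steps you yourself flag as ``delicate'' are genuine gaps, not routine bookkeeping.

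The clearest failure is the ``at least one orientation'' step. From $k-1$ disjoint zero $A$-paths in $G_C$ and $k-1$ in $G_D$ you cannot in general extract $k$ pairwise disjoint ones: the two families meet only in $A$, so the conflict graph on the $2(k-1)$ paths is bipartite of maximum degree~$2$, and its maximum independent set may be exactly $k-1$ (already for $k=2$: one path on each side sharing a single endvertex in~$A$). Concatenating two such paths at the shared $A$-vertex does not help either, since that puts a vertex of $A$ into the interior and the result is no longer an $A$-path. The third axiom has the same flavour of problem: the straightforward count produces a hitting set of size $3f(k-1)+3g(k)$, not $2f(k-1)+3g(k)$, and your assertion that ``only two of the $X_i$ turn out to be needed'' is unjustified --- a zero $A$-path avoiding $S_1\cup S_2\cup S_3$ can perfectly well have its interior in $V(C_3)\setminus(V(C_1)\cup V(C_2))$, forcing $X_3$ into the cover. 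Finally, with your orientation the covering property is not automatic: a zero $A$-path may sit entirely inside $G[A\cup V(C)]$ even when $G_C$ admits an $f(k-1)$-hitting set, so ``every zero $A$-path meets $D-C$'' does not follow from your definition without the further ``tightening'' you allude to --- and that tightening reintroduces the Axiom~1 difficulty. The coefficient $2$ (rather than $3$) in the hypothesis $f(k)\ge 3g(k)+2f(k-1)+10$ does indicate that the intended proof invokes minimality only twice; locating where, and with which orientation, is exactly the missing content.
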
 

In \cite{BU18} this was shown for $A$-paths of length $0\ \textrm{mod}\ 4$. However, the proof is independent of the specific type of zero $A$-paths we are looking for. Thus, we may use it here as well.

\section{Wall Theorem}
In this section Theorem \ref{primeWall} will be proven.
We start with a generalization of a Theorem by Thomassen.
He showed that for any $m\in\N$, a sufficiently large wall also contains a large wall where all subdivided edges have length $0\ \m$. Note that this is just a zero wall where all edges are labelled with $1\in\Z_m$.

The proof of Thomassen actually does not necessitate this special case. We can use it in a much more general sense, that is any sufficiently large $\Gamma$-labelled wall contains a zero wall.
I want to stress here that the main part of the proof of this generalization is the same as the proof by Thomassen. It is included for the sake of completeness.

\begin{theorem}\label{ThomZero}
Let $G$ be a graph and assume that $\Gamma$ is finite such that $m=|\Gamma|$. For every positive integer $t$, there is an integer $f_{\ref{ThomZero}}(m,t)$ such that if $G$ contains a wall $W$ of size $f_{\ref{ThomZero}}(m,t)$, then $G$ also contains a zero wall $W_0$ of size $t$. Additionally, $\cT_{W_0}$ is a truncation of $\cT_W$.
\end{theorem}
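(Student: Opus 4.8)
The plan is to adapt Thomassen's argument for finding a wall with all subdivided edges of length $0 \bmod m$ to the more general setting of an arbitrary finite group $\Gamma$. The key observation is that Thomassen's proof never uses any structure of $\mathbb{Z}_m$ beyond the fact that it has $m$ elements; it is essentially a Ramsey-type / pigeonhole argument on the weights of paths. So I would first recall the structure of Thomassen's proof: one takes a huge wall, and iteratively thins it out to control the weights of its subdivided edges.

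**First I would** set up the right intermediate object. A convenient way to run the argument: in a large wall $W$, look at one long horizontal "brick path" at a time, or more precisely, process the wall row-by-row and column-by-column. The basic move is this. Given a subdivided edge (a $B$-path) $e$ of weight $w(e) \in \Gamma$, and an adjacent/parallel segment, one can sometimes reroute to change the weight; but more robustly, Thomassen's trick is to take many disjoint "candidate" subpaths realizing the connection between two branch vertices and use the pigeonhole principle: among $m+1$ disjoint $u$–$v$ paths (coming from the rigid structure of a large wall between two nearby branch vertices), two have the same weight, and their "difference" — concatenating one with the reverse of the other — is a zero cycle that can be used to normalize. The cleaner route, which I'd actually write, is the standard one: take $W$ of size $g(m,t)$ for a suitable tower-type function; by Theorem~\ref{tangleTOwall}-style compression plus iterated pigeonhole on the $m$ possible weights of each of the $O(t^2)$ subdivided edges, extract a subwall $W_0$ of size $t$ in which, after possibly sliding each row and column to a parallel copy within $W$, every subdivided edge has weight $0 \in \Gamma$. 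The point that all along we stay inside $W$ and only pass to subwalls (or to walls obtained by the allowed row/column slides, which are again subwalls of a slightly larger $W$) gives the last sentence: $W_0$ is a subwall of $W$, hence by the remark preceding Theorem~\ref{tangleTOwall}, $\cT_{W_0}$ is a truncation of $\cT_W$.

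**The hard part will be** getting the bookkeeping of the iterative thinning correct so that fixing the weight of one subdivided edge to $0$ does not destroy the control already obtained on previously-handled edges — this is exactly the delicate part of Thomassen's original argument, and it forces the function $f_{\ref{ThomZero}}(m,t)$ to grow quite fast (roughly a tower whose height depends on the number of subdivided edges of a $t$-wall). I would handle this by processing subdivided edges in a fixed linear order and, at each step, only ever replacing a row/column of the current wall by another row/column of the original wall that lies strictly outside the portion already frozen; since a $t$-wall has $O(t^2)$ subdivided edges and each pigeonhole step costs a factor depending on $m$ and on the current wall size, one gets a well-defined (if enormous) bound. I would then remark, as the excerpt already anticipates, that once this generalization is in place the rest of the section (and ultimately Theorem~\ref{primeWall}) builds on it, and that for the purposes of this paper the exact form of $f_{\ref{ThomZero}}$ is irrelevant — only its existence matters. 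Finally, to keep the exposition honest, I would state explicitly which lemma of Thomassen's paper is being invoked verbatim and note that the only modification is replacing "length $\bmod m$" by "weight in $\Gamma$", so the verification reduces to checking that every pigeonhole step used a set of size exceeding $|\Gamma| = m$ rather than exceeding $m$ residues — a substitution that is valid line by line.
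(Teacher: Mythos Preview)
Your high-level diagnosis---that the proof is Thomassen's with ``length $\bmod\, m$'' replaced by ``weight in $\Gamma$'', and that only $|\Gamma|=m$ is used---is exactly right and is what the paper says. But what you have submitted is a description of a plan, not a proof, and the plan misidentifies the actual mechanism.

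The paper (following Thomassen) does \emph{not} process the $O(t^{2})$ subdivided edges one at a time. It processes an entire row at once: along row~$i$, record the cumulative weight from the leftmost branch vertex to each successive branch vertex; by pigeonhole a $1/m$ fraction of these share a common value, and then every segment of row~$i$ between two such branch vertices has weight~$0$. Keeping only the corresponding columns makes \emph{all} of row~$i$'s subdivided edges zero simultaneously, and does not disturb rows already processed, since one is only shrinking the column set further. Iterating over $O(t)$ rows costs a factor~$m$ each, so the bound is single-exponential of order $(3m)^{O(t)}$, not the tower you announce. For the other direction the paper runs the same cumulative-weight trick along \emph{diagonal} staircase paths, this time deleting rows; the final zero wall $W_{0}$ then takes these diagonals as its rows and uses (already zero) horizontal row segments as its ``vertical'' subdivided edges. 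This diagonal construction is the crux of Thomassen's proof and is absent from your outline; your suggestion to ``replace a row/column outside the frozen portion'' does not explain how to zero a vertical subdivided edge without disturbing the horizontal ones, and processing edges individually runs into precisely the interdependence you flag without resolving it.

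Finally, because the rows of $W_{0}$ are diagonals of an intermediate wall, $W_{0}$ is \emph{not} a subwall of $W$ in the paper's sense (its rows are not contained in rows of $W$). Hence $\cT_{W_{0}}\subseteq\cT_{W}$ is not automatic from the remark preceding Theorem~\ref{tangleTOwall}. The paper supplies a short separate argument: every row of $W_{0}$, being a diagonal, meets more than $t$ rows of $W$, so for any separation of order at most~$t$ the two tangles are forced to choose the same large side. Your proposal asserts the subwall property and skips this step.
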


\begin{proof}

Let $n_{3t+1}=3(t+1)$ and for $i\in\{0,\ldots,3t\}$, recursively define $n_i=3(n_{i+1}+1)m$.
Define $f_{\ref{ThomZero}}(m,t)=n_0$. Observe that $n_0$ depends on $m$ and $t$.

Let $x_{1,1},\ldots,x_{1,n_0+1}$ be the branch vertices in the first row $R_1$ of $W$ (ordered along the top row). For $i\in\{1,\ldots,n_0+1\}$, let $\ell_i$ be the weight of the subpath of $R_1$ from $v_{1,1}$ to $v_{1,i}$. As $\Gamma$ is finite, there is a subset $I_1\subseteq \{1,\ldots,n\}$ of size $\frac{n_0+1}{m}\geq n_1+1$ and a $\gamma\in\Gamma$ such that $\ell_i=\gamma$ for $i\in I_1$. For $i,j\in I_1$ with $i<j$, let $P_i,P_j$ be the paths from $x_{1,1}$ to $x_{1,i}$ and $x_{1,j}$ respectively. By choice of $I_1$, the paths $P_i$ and $P_j$ have weight $\gamma$ and since $i<j$, the path $P_j$ contains $x_{1,i}$. Now the subpath of $R_1$ between $x_{1,i}$ and $x_{1,j}$ is $P_j-P_i$, which means its weight is $\gamma-\gamma=0$.
This implies that all subpaths of $R_1$ between vertices $x_{1,i}$ with $i\in I_1$ are zero paths.

Now remove all columns of $W$ that do not contain a vertex $x_{1,i}$ for $i\in I_1$. 
Doing this, we obtain a subwall $W_1$ of $W$ of size $n_0\times n_1$ such that all paths in the first row of $W_1$ between branch vertices are zero. 

We can apply the exact same argument to the second row of $W_1$ to obtain a subwall $W_2$ of $W_1$ of size $n_0\times n_2$ where all paths in the first and all paths in the second row between branch vertices are zero. Continue doing this for the first $3t+1$ rows.
This yields a subwall $W_{3t+1}$ of $W$ of size $n_0\times n_{3t+1}$ such that all subpaths of rows between branch vertices are zero.

An elementary wall has vertex set $\{v_{i,j}:i\in[n+1],j\in[2n+2]\}$ (where two vertices were removed). As a wall is a subdivision of an elementary wall, we can define these vertices in a wall, too.
Let $W_0'=W_{3t+1}$.
For $j\in\{1,\ldots,\frac{n_{3t+1}}{3}\}$, we define the $j^{th}$ diagonal path as a path through subdivided edges from
$v_{1,2j-1}$ to $v_{2,2j-1}$ to $v_{2,2j}$ to $v_{3,2j}$ to $v_{3,2j+1}$ and so on. Since $j\leq \frac{n_{3t+1}}{3}$, it holds that all diagonal paths end in the last row of $W_{3t+1}$ and, thus, the $j^{th}$ diagonal path contains $n_0+1$ branch vertices of the form $v_{i,2j-1+i-1}$ for $i\in[n_0+1]$.
 
With the same argument as before, there is a set of indices $J_1$ of size $n_1+1$
such that the weight of each subpath of the first diagonal path between any two vertices of the form $v_{i_1,i_1}$ and $v_{i_2,i_2}$ is zero, if $i_1,i_2\in J_1$. 
Note that for $j=1$, a vertex of the form $v_{i,i}$ is also of the form $v_{i,2j-1+i-1}$.
We remove all rows of $W_0'$ that do not contain a vertex $v_{i,i}$ with $i\in J_1$. We obtain a subwall $W_1'$ of size $n_1\times n_{3t+1}$ such that all subpaths of its first diagonal path between branch vertices are zero. We use here that $W_1'$ is a subwall of $W_{3t+1}$ and, thus, the subdivided edges between vertices $v_{i,i-1}$ and $v_{i,i}$ are zero.

Apply this argument to the second diagonal path of $W_1'$ to obtain a subwall $W_2'$ such that all subpaths of the first and second diagonal path between branch vertices are zero. Continue this, until $W_{3t+1}'$ is constructed. Its size is $n_{3t+1}\times n_{3t+1}$.
All paths between branch vertices that only intersect diagonal paths or rows of $W_{3t+1}'$, are zero.

As before, we define $v_{i,j}'$ to be the vertices in $W_{3t+1}'$ of the underlying elementary wall.
Now we are able to define $W'$. The $i^{th}$ row of $W'$ is the $i^{th}$ diagonal path in $W_{3t+1}'$. Add the subpaths of rows of $W_{3t+1}'$ from $v_{i,j}$ to $v_{i,j+1}$ if $i+j+2\equiv 0\ \textrm{mod}\ 4$. To convince yourself that this really defines a wall, see Figure \ref{ThomWall}. There were $\frac{n_{3t+1}}{3}=t+1$ diagonal paths in $W_{3t+1}'$, which means there are $t+1$ rows in $W'$. Every two rows of $W_{3t+1}'$ contain a column of $W'$. Together this implies that the size of $W'$ is at least $t$. That $W'$ is a zero wall simply follows from the fact that it is subgraph of the rows and diagonal paths of $W_{3t+1}'$. 

\begin{figure}[ht]
\centering
\begin{tikzpicture}
\def\wallheight{8}
\def\brickheight{0.75}
\def\diags{5}
\def\diagx{17}
\tikzstyle{vx}=[thick,circle,inner sep=0.cm, minimum size=1.6mm, fill=white, draw=black]

\foreach \j in {0,...,\diags}{
	\foreach \i in {0,...,\diagx}{
		\ifnum\i=0
			\node[vx] (v\i\j) at (2*\j*\brickheight,0){};
		\fi
		\pgfmathsetmacro\m{\diagx-1};
		\pgfmathsetmacro\n{\diagx-2};
		\ifnum\j=0
			\ifnum\i=\m
				\breakforeach
			\fi
		\fi
		
		\ifnum\j=\diags
			\ifnum\i=\n
				\breakforeach
			\fi
		\fi		
		
		\ifnum\i>0
			\ifodd\i
				\pgfmathsetmacro\k{(\i-1)/2)};
				\pgfmathsetmacro\l{(\i+1)/2)};
				\node[vx] (v\i\j) at (\k*\brickheight+2*\j*\brickheight,-\l*\brickheight){};
			\else	
				\pgfmathsetmacro\k{\i/2)};
				\node[vx] (v\i\j) at (\k*\brickheight+2*\j*\brickheight,-\k*\brickheight){};
			\fi
		\fi
	}
}

\pgfmathsetmacro\diagxSub{\diagx-1};
\foreach \j in {0,...,\diags}{
	\foreach \i in {0,...,\diagxSub}{
		\pgfmathsetmacro\m{\diagx-3};
		\pgfmathsetmacro\n{\diagx-3};
		\ifnum\j=0
			\ifnum\i>\m
				\breakforeach
			\fi
		\fi	
		
		\ifnum\j=\diags
			\ifnum\i=\n
				\breakforeach
			\fi
		\fi			
		
		\pgfmathsetmacro\k{int(\i+1)};
		\draw[thick] (v\i\j) -- (v\k\j);					
	}
}

\pgfmathsetmacro\diagsSub{\diags-1};
\foreach \j in {0,...,\diagsSub}{
	\foreach \i in {1,...,\diagx}{
		\pgfmathsetmacro\k{int(\j+1)};
		\pgfmathsetmacro\l{int(\i-1)};
		\ifodd\j
			\pgfmathsetmacro\m{int(Mod(\i+2,4))};
			\ifnum\m=0
				\draw[dotted,thick] (v\i\j) -- (v\l\k);
			\fi
		\else
			\pgfmathsetmacro\m{int(Mod(\i,4))};
			\ifnum\m=0
				\draw[dotted,thick] (v\i\j) -- (v\l\k);
			\fi
		\fi		
	}
}

\pgfmathsetmacro\diagsSub{\diags-1};
\foreach \j in {0,...,\diagsSub}{
		\pgfmathsetmacro\k{int(\j+1)};
		\ifodd\j
			\draw[dotted,thick] (v\diagx\j) -- (v\diagx\k);
		\else
			\draw[dotted,thick] (v0\j) -- (v0\k);
		\fi				
}

\end{tikzpicture}
\caption{The rows of $W'$ are drawn in solid line and the subdivided edges between the rows of $W'$ in dotted lines.}\label{ThomWall}
\end{figure}
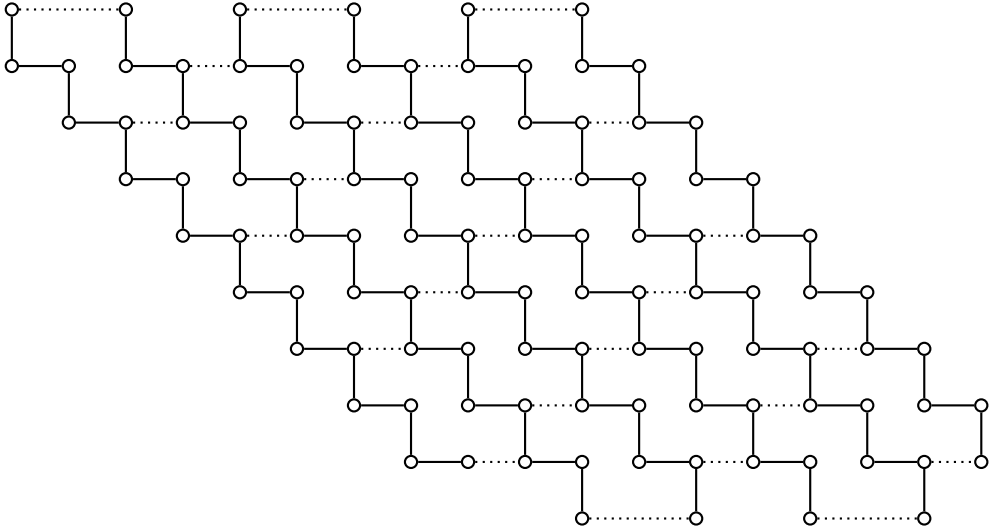

Now we still need to prove that $\cT_{W'}$ is a truncation of $\cT_W$.
Let $(C,D)$ be a separation of order at most $t$.
We need to show $(C,D)\in\cT_{W'}$ if and only if $(C,D)\in\cT_W$.
So let $(C,D)\in \cT_{W'}$ and suppose $(D,C)\in\cT_W$. 
Since the size of $W'$ is $t$, it contains $t+1$ rows, which implies that one row $R$ of $W'$ is disjoint from $V(C\cap D)$. As $(C,D)\in \cT_{W'}$, the row $R$ is contained in $D-C$.
With the same argument all but $t$ rows of $W$ lie in $C-D$.
It follows that $R$ intersects at most $t$ rows of $W$. However, by construction, every row of $W'$ intersects all but at most one row of $W_{3t+1}'$. These are more than $t+1$ rows and as $W_{3t+1}'$ is a subwall of $W$, the row $R$ also intersects more than $t+1$ rows of $W$. This is a contradiction. As the argument is symmetric in $C$ and $D$, this finishes the proof.
\end{proof}

The next lemma tells us that if all paths between vertices in a vertex set $B$ are zero, then the weight of all paths from $B$ to a fixed vertex that has three disjoint paths to $B$ is always the same.
This will also be useful when we prove the main theorem later.

\begin{lemma}\label{zeroM}
Let $G$ be a graph that contains a vertex set $B$
such that each $B$-path has weight zero. Let $M$ be the set of vertices $v$ that have three disjoint paths to $B\setminus \{v\}$. Then for each $v\in M$, there is a $\gamma\in\Gamma_2$ such that all paths from $v$ to $B$ have weight $\gamma$. Moreover, if for $v,w\in M$ any path with endvertices paths from $v$ and $w$ to $B$ are zero, then also all paths with endvertices $v$ and $w$ are zero.
\end{lemma}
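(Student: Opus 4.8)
The plan is to make the structure at $v$ completely explicit by fixing a fan of three $B$-paths through $v$ and then expressing the weight of an arbitrary $v$-$B$ path in terms of that fan. If $v\in B$ there is nothing to prove, since then every path from $v$ to $B$ is a $B$-path and hence has weight $0\in\Gamma_2$; so assume $v\notin B$. Truncating each of the three disjoint paths witnessing $v\in M$ at its first vertex of $B$ yields three $v$-$B$ paths $F_1,F_2,F_3$ that pairwise meet only in $v$ and end in three distinct vertices $b_1,b_2,b_3\in B$. For $i\neq j$ the path $b_iF_ivF_jb_j$ is a $B$-path, hence zero, so $w(F_i)+w(F_j)=0$; adding these identities in pairs yields $2w(F_i)=0$ for each $i$ and $w(F_1)=w(F_2)=w(F_3)=:\gamma$. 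Thus $\gamma\in\Gamma_2$ and every fan path has weight $\gamma$. For a vertex $x$ on $F_j$ put $\phi(x):=w(vF_jx)$, so that $\phi(v)=0$, $\phi(b_j)=\gamma$ and $w(xF_jb_j)=\gamma-\phi(x)$.

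The core is to show $w(P)=\gamma$ for every $v$-$B$ path $P$, say with endvertex $b$. Deleting any vertex or edge outside $F_1\cup F_2\cup F_3\cup P$ only destroys $B$-paths and keeps $v\in M$, so I may assume $G=F_1\cup F_2\cup F_3\cup P$. If $P$ meets some $F_i$ with $b_i\neq b$ only in $v$, then $bPvF_ib_i$ is a $B$-path and $w(P)=-\gamma=\gamma$. Otherwise let $v=u_0,u_1,\dots,u_m$ ($m\ge 1$) be the vertices of $P$ lying on $F_1\cup F_2\cup F_3$, in the order in which $P$ meets them, write $u_t\in F_{j_t}$, and set $S_t=u_{t-1}Pu_t$ and $Q=u_mPb$ (possibly trivial); each $S_t$ and $Q$ meets the fan only in those of its endvertices that lie on the fan. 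The key observation is that when $j_{t-1}\neq j_t$ the weight $w(S_t)$ can be read off in two ways: from the $B$-path $b_{j_{t-1}}F_{j_{t-1}}u_{t-1}S_tu_tF_{j_t}b_{j_t}$ one gets $w(S_t)=\phi(u_{t-1})+\phi(u_t)$, while from $b_{j_{t-1}}F_{j_{t-1}}u_{t-1}S_tu_tF_{j_t}vF_lb_l$ with $l\notin\{j_{t-1},j_t\}$ — this is where a \emph{third} fan path is needed — one gets $w(S_t)=\phi(u_{t-1})-\phi(u_t)$. Comparing the two forces $2\phi(u_t)=0$, and the mirror pair of $B$-paths forces $2\phi(u_{t-1})=0$; similarly, treating $Q$ as a segment leaving the fan yields $w(Q)=\phi(u_m)-\gamma$ together with $2\phi(u_m)=0$ (for $t=1$, where $u_0=v$, one only has the single $B$-path $b_{j_1}F_{j_1}u_1S_1vF_lb_l$, giving $w(S_1)=\phi(u_1)$, consistent with $\phi(u_0)=0$). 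Assuming for the moment that $j_{t-1}\neq j_t$ for every $t$, all the $\phi(u_t)$ lie in the subgroup $\Gamma_2$, so substituting into $w(P)=\sum_{t=1}^m w(S_t)+w(Q)$ and telescoping gives $w(P)=2\bigl(\sum_{t=1}^m\phi(u_t)\bigr)-\gamma$, which equals $\gamma$ since $\sum_t\phi(u_t)\in\Gamma_2$ and $-\gamma=\gamma$.

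A segment $S_t$ with $j_{t-1}=j_t$ is a chord of a single fan path, and then $w(S_t)=\phi(u_t)-\phi(u_{t-1})$ equals the weight of the corresponding subarc of $F_{j_t}$; replacing an innermost such chord by that subarc produces a $v$-$B$ path of the same weight with strictly fewer edges off the fan, so an induction on the number of such edges reduces to the previous case. For the ``moreover'' statement I would argue in the same spirit: the first part attaches constants $\gamma_v,\gamma_w\in\Gamma_2$ to $v$ and $w$, and the hypothesis is exactly that $\gamma_v+\gamma_w=0$; a path $S$ with endvertices $v$ and $w$ can — after extending it by a fan path at each end, chosen internally disjoint from $S$ and from one another by a Menger-type argument, or else handled by the same decomposition as above — be turned into a $B$-path of weight $\gamma_v+w(S)+\gamma_w$, whence $w(S)=-(\gamma_v+\gamma_w)=0$.

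I expect the real work to be bookkeeping: checking that each displayed concatenation really is a vertex-disjoint path (so that the two-$B$-path identities and the chord-smoothing are legitimate), that the smoothing step keeps $P$ a path, and that the induction parameter strictly decreases. The only conceptual ingredient — and the only place where \emph{three} rather than two disjoint paths is used — is the pair of $B$-paths computing $w(S_t)$ in two ways, which forces the partial weights $\phi(u_t)$ into $\Gamma_2$.
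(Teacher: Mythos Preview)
Your approach is genuinely different from the paper's. You decompose $P$ into fan-to-fan segments $S_t$ and evaluate each $w(S_t)$ by sandwiching it between fan arcs to obtain two distinct $B$-paths, forcing the partial sums $\phi(u_t)$ into $\Gamma_2$. The paper instead proves a one-step rerouting lemma: among all putative counterexamples $P$ pick one with fewest edges off the fan; if $P$ still meets all three $F_i$, let $u_1,u_2,u_3$ be the \emph{outermost} hits on $F_1,F_2,F_3$, listed in $P$-order, and replace the tail $u_2Pb$ by $u_2F_2b_2$. Two $B$-path identities show the weight is preserved, and the segment $u_2Pu_3$ necessarily contains an off-fan edge (since $F_2\cap F_3=\{v\}$ and $v$ precedes $u_2$ on $P$), contradicting minimality. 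The paper's device is shorter and, crucially, it proves the stronger statement that every path from $v$ to the set $W=\{w:\text{every $w$--$B$ path is zero}\}$ has weight $\gamma$; since the hypothesis of the ``moreover'' clause places $w\in W$, that clause is then immediate.

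There is a real gap in your chord-smoothing. Take $m=5$, $j$-sequence $1,1,2,1,1$, and $F_1$-order $v,u_4,u_1,u_5,u_2$. The only chords are $S_2$ and $S_5$, and each of their subarcs on $F_1$ contains another $u_s$: the subarc of $S_2$ (from $u_1$ to $u_2$) contains $u_5$, and the subarc of $S_5$ (from $u_4$ to $u_5$) contains $u_1$. Hence neither replacement yields a simple path, and no ``innermost'' chord exists. The strategy can be rescued --- if one tracks the signs in the chord formula $w(S_t)=\pm(\phi(u_t)-\phi(u_{t-1}))$ carefully, these contributions telescope and the surviving terms are exactly the $2\phi(u_t)$ for those $t$ where you \emph{did} establish $2\phi(u_t)=0$ --- but this is more than bookkeeping and is not what you wrote. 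Your ``moreover'' is likewise incomplete: one cannot in general extend a $v$--$w$ path by fan arms at both ends while remaining internally disjoint, and the segment calculus you set up only treats paths ending in $B$, not in $w$. The paper's passage from $B$ to the larger set $W$ is precisely what closes this gap.
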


\begin{proof}
First a general observation:
\begin{equation}\label{PiZero}
\begin{minipage}[c]{0.8\textwidth}\em
Let $u$ be a vertex of $G$ with three disjoint paths $P_1,P_2$ and $P_3$ to vertices in $B\setminus \{u\}$.
There is a $\gamma\in\Gamma_2$ such that each path $P_i$ has weight $\gamma$.
\end{minipage}\ignorespacesafterend 
\end{equation}

Let $x$ be the weight of $P_1$. The paths $P_1\cup P_2$ and $P_1\cup P_3$ are $B$-paths, which means they are zero. Hence, the weight of $P_2$ and $P_3$ is $-x$. Now the weight of $P_2\cup P_3$ is $-2x$, which again has to be zero. Thus, $-x$ is its own inverse, which implies that $x=-x$ and, therefore, $x\in \Gamma_2$.
This proves (\ref{PiZero}).

Let $v\in M$. 
Let $P_1,P_2$ and $P_3$ be three disjoint paths from $v$ to $B\setminus \{v\}$. Let $w$ be some vertex such that all paths from $w$ to $B$ are zero and let $P$ be a path from $v$ to $w$ such that $P$ intersects each path $P_1,P_2,P_3$. We claim that :
\begin{equation}\label{rerute}
\begin{minipage}[c]{0.8\textwidth}\em
there is a path $P'$ from $v$ to $B$ of the same weight as $P$ that contains less edges outside of $E(P_1\cup P_2\cup P_3)$.
\end{minipage}\ignorespacesafterend 
\end{equation}

For $i\in\{1,2,3\}$, let $u_i$ be the vertex in $P_i\cap P$ that is closest to $w_i$ on $P_i$. We may assume that, starting in $v$, $P$ passes through $u_1,u_2$ and $u_3$ in this order; otherwise swap the indices of the paths $P_1,P_2$ and $P_3$. Now $w_1P_1u_1Pu_2P_2w_2$ is a $B$-path and, thus, a zero path and $w_1P_1u_1Pw$ is a zero path by choice of $w$. It follows that the weight of $u_2P_2w_2$ and the weight of $u_2Pw$ is the same. In $P$, we replace $u_2Pw$ by $u_2P_2w_2$ and obtain a path $P'$ that has the same weight as $P$ and which is a path from $v$ to $B$.
Moreover, since $P_2$ and $P_3$ are disjoint outside the vertex $v$, which is already contained in $P$, the path $u_2Pu_3$ contains an edge outside of $P_1\cup P_2 \cup P_3$. This implies that $P'$ contains fewer edges outside of $P_1\cup P_2 \cup P_3$ than $P$, which proves (\ref{rerute}).

Now let $v$ be a vertex with three disjoint paths $P_1,P_2$ and $P_3$ to $B\setminus \{v\}$. By (\ref{PiZero}), there is a $\gamma\in\Gamma_2$ such that each path $P_i$ has weight $\gamma$.
Let $W$ be the set of vertices of $w$ such that all paths from $w$ to $B$ are zero. If we can show that all paths from $v$ to $W$ have weight $\gamma$, then this finishes the proof. 

Suppose otherwise. Let $P$ be the path from $v$ to $W$ that minimizes $|E(P\setminus (P_1\cup P_2\cup P_3))|$ such that its weight is not $\gamma$. If $P$ is disjoint from a path $P_i$, then $P\cup P_i$ is a path from $w$ to $B$, which means it is a zero path. As the weight of $P_i$ is $\gamma$ and as $\gamma\in\Gamma_2$, also the weight of $P$ is $\gamma$. This is a contradiction.

Suppose that $P$ intersects each path $P_i$. By (\ref{rerute}), there is a path $P'$ from $v$ to $B$ that has the same weight as $P$ but intersects fewer edges outside of $E(P_1\cup P_2\cup P_3)$ than $P$. Since $B\subset W$, this is a contradiction. 
\end{proof}

With this knowledge we can prove Theorem \ref{primeWall}.
The idea is very simple: Start with a sufficiently large wall $W$ in a graph $G$ and, with Theorem \ref{ThomZero}, find a large zero wall $W'$ inside of $W$. Then we apply Wollan's theorem to the set $B$ of branch vertices of $W'$ to either find a bounded set $X$ such that all $B$-paths in $G-X$ are zero or we find many non-zero $B$-paths that can be made into a non-zero linkage for a subwall of $W'$. In either case we are done.

\primeWall*

\begin{proof}
For fixed $s$, let $x=f_{\ref{Wollan}}(s^3)$ and $p=2(2(3x+1)^2+3x)^2+x$. Now set $g_{\ref{primeWall}}(s)=g_{\ref{primeWall}}(p)$ and $f_{\ref{primeWall}}(m,r,s)=f_{\ref{ThomZero}}(m, \max\{2g_{\ref{primeWall}}(p)r, \max\{r,4f_{\ref{Wollan}}(s^3)\}+10f_{\ref{Wollan}}(s^3)\})$. 
By Theorem \ref{ThomZero} there is a zero wall $W'$ of size at least $\max\{2g_{\ref{primeWall}}(p)r, \max\{r,4f_{\ref{Wollan}}(s^3)\}+10f_{\ref{Wollan}}(s^3)\}$ in $G$. Furthermore, the tangle $\cT_{W'}$ is a truncation of the tangle induced by $W$. Note that $W_0$ will be a subwall of $W'$, which means that $\cT_{W_0}$ will be a truncation of $\cT_W$.

Apply Theorem \ref{Wollan} to the set $B$ of branch vertices of $W'$. There are either $p$ disjoint non-zero $B$-paths in $G$ or a set $X$ of at most $g_{\ref{primeWall}}(p)$ vertices that intersects all these paths. In the latter case there is a subwall $W_0$ of $W'$ that is disjoint from $X$.
This is because $W'$ contains more than $2g_{\ref{primeWall}}(p)$ disjoint subwalls of size $r$, each being a zero wall. One of them has to be disjoint from $X$.
Then, since all branch vertices of $W_0$ are also branch vertices of $W'$, all paths in $G-X$ between branch vertices of $W_0$ have length zero. So we are done in this case.

Now we may assume that there is a set $\cP$ of $p$ disjoint non-zero $B$-paths. We choose $\cP$ such that the number of edges outside of $E(W)$ is minimized, that is $\sum_{P\in\cP}|E(P)\setminus E(W)|$ is minimized.
Remember that, by definition, any non-zero $B$-path does not contain any vertices of $B$ in its interior. 
We claim:

\begin{equation}\label{fewEdges}
\begin{minipage}[c]{0.8\textwidth}\em
the paths in $\cP$ cannot intersect more than $10|\cP|$ subdivided edges of $W$.
\end{minipage}\ignorespacesafterend 
\end{equation}

Suppose otherwise. For each subdivided edge $e$ of $W$ that is being intersected by a path in $\cP$, we find a path $P_e\in\cP$ that is closest on $e$ to one of the two endvertices of $e$ (there is one such path for each endvertex of a subdivided edge but it may be the same). Any path that contains a branch vertex $v$ is closest to $v$ on any subdivided edge that contains $v$.
By pigeonhole principle, there is one path $P\in\cP$ that is closest to endvertices on $10$ different subdivided edges of $W'$. Let $E^*$ be the set of these subdivided edges.
As the degree of branch vertices is $3$, 
at most $6$ subdivided edges in $E^*$ contain an endvertex of $P$;
remove these from $E^*$.
The remaining subdivided edges in $E^*$ are disjoint from all endvertices of paths in $\cP$. Note that the size of $E^*$ is at least $4$. 

Again because the degree of branch vertices is $3$, there are two distinct branch vertices $v_1$ and $v_2$ that $P$ is closest to on distinct subdivided edges $e_1,e_2\ E^*$ respectively.
Let $a,b$ be the endvertices of $P$ and for $j\in\{1,2\}$, let $w_j$ be the intersection of $P$ and $e_j$ that is closest to $v_j$. We may assume that starting in $a$ the vertex $w_1$ comes before $w_2$ on $P$. Otherwise swap the roles of $a$ and $b$.

We want to show that we can reroute $P$ along $e_1$ or $e_2$  
to find a non-zero $B$-path $P'$ that has fewer intersections with $E(G)\setminus E(W)$ than $P$. This implies that replacing $P$ by $P'$ would yield a better choice for $\cP$ (assuming that $P'$ is still disjoint from all other paths in $\cP$). So let $\gamma_1$ be the weight of $aPw_1$, $\gamma_2$ the weight of $w_1Pw_2$ and $\gamma_3$ the weight of $w_2Pb$. For $j\in\{1,2\}$, let $\alpha_j$ be the weight of the path $w_je_jv_j$. See Figure \ref{fewEdgeDraw} for a drawing of all the important vertices and weights.
Note that the weight of $P$ is $\gamma_1+\gamma_2+\gamma_3$
and as $P$ is non-zero, it follows that $\gamma_1+\gamma_2+\gamma_3\neq 0$.

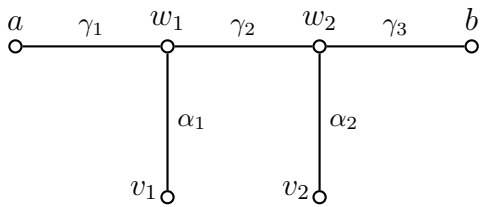
\begin{figure}[ht]
\centering
\begin{tikzpicture}
\def\distance{2}
\tikzstyle{vx}=[thick,circle,inner sep=0.cm, minimum size=1.6mm, fill=white, draw=black]

\node[vx,label=\large{$a$}] (a) at (0,0) {};
\node[vx,label=\large{$b$}] (b) at (3*\distance,0) {};
\node[vx,label=\large{$w_1$}] (w1) at (\distance,0) {};
\node[vx,label=\large{$w_2$}] (w2) at (2*\distance,0) {};
\node[vx,label={[left]\large{$v_1$}}] (v1) at (\distance,-\distance) {};
\node[vx,label={[left]\large{$v_2$}}] (v2) at (2*\distance,-\distance) {};

\path[-,draw,thick]
 	(a) edge node[auto] {$\gamma_1$} (w1)
 	(w1) edge node[auto] {$\gamma_2$} (w2)
	(w2) edge node[auto] {$\gamma_3$} (b)
	(w1) edge node[auto] {$\alpha_1$} (v1)
	(w2) edge node[auto] {$\alpha_2$} (v2);


\end{tikzpicture}
\caption{A drawing of $P$ and the subpath of $e_j$ from $w_j$ to $v_j$ for $j\in[2]$.}\label{fewEdgeDraw}
\end{figure}

Any path from $a$ or $b$ to the interior of $e_1$ or $e_2$ either contains an endvertex of $e_1$ or $e_2$ or an edge outside of $W$.
Since $a$ and $b$ are not endvertices of $e_1$ and $e_2$ and because $P$ does not contain branch vertices in its interior, it follows that
the paths $aPw_1$ and $bPw_2$ contain an edge of $G-W'$.
With this observation we can construct three $B$-paths that contain fewer edges outside of $E(W')$ than $P$.
These are $aPw_2e_2v_2$ and $v_1e_1w_1Pb$ and $v_1e_1w_1Pw_2e_2v_2$. Since $P$ was the closest path on $e_1$ and $e_2$ and since $w_1$ and $w_2$ were the closest vertices to $v_1$ and $v_2$ on $e_1$ and $e_2$ respectively, the constructed paths do not intersect any paths in $\cP$ except $P$.
If one of these paths was non-zero, we could replace $P$ by this path, which would yield a contradiction.
Thus, we obtain the following equations:

$$\gamma_1+\gamma_2+\alpha_2=0$$ 
$$\alpha_1+\gamma_2+\alpha_2=0$$ 
$$\alpha_1+\gamma_2+\gamma_3=0$$ 

From the first two equations we obtain $\alpha_1=\gamma_1$ and from the last two equations we obtain $\alpha_2=\gamma_3$. It follows that $\alpha_1+\gamma_2+\alpha_2=\gamma_1+\gamma_2+\gamma_3\neq 0$. This is a contradiction, which proves (\ref{fewEdges}).

By (\ref{fewEdges}) and since the size of $W'$ is larger than $20p\max\{r,4f_{\ref{Wollan}}(s^3)\}+10f_{\ref{Wollan}}(s^3)$, there is a subwall $W_0$ of $W'$ of size $\max\{r,4f_{\ref{Wollan}}(s^3)\}$ that is $4f_{\ref{Wollan}}(s^3)$-contained and that is disjoint from all paths in $\cP$
We claim:
\begin{equation}\label{manyDisjoint}
\begin{minipage}[c]{0.8\textwidth}\em
there are $s^3$ disjoint non-zero $N_0$-paths in $G-(W_0-N_0)$.
\end{minipage}\ignorespacesafterend 
\end{equation}

Note that this is a non-zero linkage for $W_0$.
Applying Lemma \ref{pureLink} here, implies that we can find a pure non-zero linkage of size $s$ for $W_0$ and are done if this is true. Suppose it is not. By Wollan's theorem, there is a set $X$ of at most $x=f_{\ref{Wollan}}(s^3)$ vertices that intersects all these paths. At least 

For each path in $\cP$, arbitrarily choose one endvertex to be the first endvertex and the other one to be the second endvertex.
In $\cP$ there are $2(2(3x+1)^2+3x)^2$ many paths that are disjoint from $X$. By pigeonhole principle and as at most two vertices may lie in the same row and column, there are $2(3x+1)^2+3x$ paths such that their first endvertices either lie in different rows or lie in different columns. From these paths at least $2(3x+1)^2$ lie in a row or column such that this row or column and the rows and columns at distance $1$ are disjoint from $X$. Applying the same argument to the second endvertices, yields a path $P$ such that both endvertices lie in a row or column such that this row or column and the rows or columns at distance $1$ are disjoint from $X$.

As $W_0$ is $4f_{\ref{Wollan}}(s^3)$-contained, there are three cycles that encapsulate $W_0$ and that are disjoint from $X$. Each cycle $C_i$ intersects all but at most $8f_{\ref{Wollan}}(s^3)$ rows and columns of $W'$, which means it intersects more than $f_{\ref{Wollan}}(s^3)$ Since the size of $W_0$ is $4f_{\ref{Wollan}}(s^3)$, there are three columns of $W$ that are disjoint from $X$ and that intersect $N_0$. All of these columns contain a path from $N_0$ to each $C_i$ that is disjoint from $W_0-N_0$ because each $C_i$ is an encapsulating cycles.
Lastly there are three rows and three columns that intersect each $C_i$ and are disjoint from $W_0$ and $X$. Let $v$ be an endvertex of $P$. As the row or column in which $v$ lies is disjoint from $X$, it is not that hard to see that we can find three disjoint paths from $v$ to $N_0$ in $W'-(W_0-N_0)-X$. As $W'$ is a zero wall, there is a zero path from $v$ to $N_0$. Now applying Lemma \ref{zeroM} implies that all paths from $v$ to $N_0$ are zero. The same is true for the other endvertex of $P$. The second part of Lemma \ref{zeroM} implies that all paths in $G-(W_0-N_0)-X$ between the two endvertices of $P$ are zero. This is a contradiction to $P$ being a non-zero path, which proves (\ref{manyDisjoint}).
\end{proof}

\section{Necessity of Theorem \ref{mainThm}}
In this chapter I will construct counterexamples for the $A$-paths of weight $\gamma$ if $\Gamma$ is infinite or if there are $x,y\in \Gamma$ with $y\neq 0$ such that for all integers $n\in\N$ it holds that $2x+ny\neq \gamma$.
The counterexamples will be constructed in the following way: Take a grid $W$ of size $n$ (or an elementary wall for that matter) and label all edges that are not in the top row with $0$ and then depending on $\Gamma$ label alld edges in the top row with some $\alpha_1\in\Gamma$.
Then add $2n$ vertices $A$ such that half of them are adjacent to the vertices on the left side and the other half adjacent to the vertices on the right side. Label all edges to the left side with some $\alpha_2$ and all edges to the right with some $\alpha_3$ (see Figure \ref{counterFigure}).

The idea is to show that any $A$-path of weight $\gamma$ has to start with an edge to the left side from $A$ and end with an edge to the right side from $A$ while picking up at least one edge in the top row of $W$. If additionally, such an $A$-path exists with the exception that this path picks up at most one edge in the top row, then there are no two disjoint $A$-paths of weight $\gamma$ and there is no set of size smaller than $\frac{n}{10}$ that intersects all these paths.
If this is indeed the case, then the $A$-paths of weight $\gamma$ do not have the \EPP.
This has been done multiple times before, for example in \cite{BHJ18b} or \cite{BU18}.
For this reason, I will not prove this again.

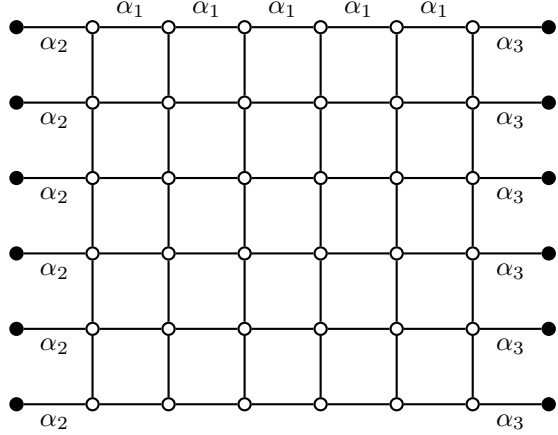
\begin{figure}[ht]
\centering
\begin{tikzpicture}
\def\size{6}
\def\gridheight{1}
\tikzstyle{vx}=[thick,circle,inner sep=0.cm, minimum size=1.6mm, fill=white, draw=black]
\tikzstyle{A}=[thick,circle,inner sep=0.cm, minimum size=1.6mm, fill=black, draw=black]

\foreach \i in {1,...,\size}{
	\foreach \j in {1,...,\size}{
			\node[vx] (v\i\j) at (\j*\gridheight,\i*\gridheight){};
	}
}

\pgfmathsetmacro\m{\size+1};	
\foreach \i in {1,...,\size}{
	\node[A] (u\i) at (0,\i*\gridheight){};
	\node[A] (w\i) at (\m*\gridheight, \i*\gridheight){};
}		

\pgfmathsetmacro\sizeminus{int(\size-1)};
\foreach \i in {1,...,\sizeminus}{
	\pgfmathsetmacro\iplus{int(\i+1)};
	\foreach \j in {1,...,\size}{
		\pgfmathsetmacro\jplus{int(\j+1)};
		\ifnum \i<\size
			\draw[thick] (v\i\j) -- (v\iplus\j);
		\fi
		\ifnum \j < \size
 			\draw[thick] (v\i\j) -- (v\i\jplus);
 		\fi
	}
}

\foreach \j in {1,...,\sizeminus}{
	\pgfmathsetmacro\jplus{int(\j+1)};
	\path[-,draw,thick] (v\size\j) edge node[auto] {$\alpha_1$} (v\size\jplus);
}


\foreach \i in {1,...,\size}{
	\path[-,draw,thick] (u\i) edge node[below] {$\alpha_2$} (v\i1);
	\path[-,draw,thick] (w\i) edge node[below] {$\alpha_3$} (v\i\size);
}

\end{tikzpicture}
\caption{The counterexample of size $6$. All unlabelled edges are labelled with $0$ and the coloured vertices are the vertices of $A$.}\label{counterFigure}
\end{figure}

We will show the following:

\begin{lemma}
Let $\gamma\in\Gamma$. If $\Gamma$ is infinite or if there are $x,y\in\Gamma$ with $y\neq 0$ such that for all integers $n$ the equation $2x+ny=\gamma$ is never satisfied, then the $A$-paths of weight $\gamma$ do not have the \EPP.
\end{lemma}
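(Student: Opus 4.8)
The plan is to build the explicit counterexample described in the text (the grid $W$ with boundary set $A$, edge labels $\alpha_1$ on the top row, $\alpha_2$ on the left attachments, $\alpha_3$ on the right attachments, and $0$ everywhere else) and then to verify the two structural properties promised in the excerpt: (i) every $\gamma$-path among $A$-paths must enter at the left, leave at the right, and traverse at least one top-row edge, so that no two such paths are vertex-disjoint and no set of fewer than $\frac{n}{10}$ vertices hits them all; and (ii) such a $\gamma$-path nonetheless exists (using exactly one top-row edge). Since the text explicitly says this reduction ``has been done multiple times before'' and declines to reprove it, the real content of the lemma is choosing the labels $\alpha_1,\alpha_2,\alpha_3$ correctly in each of the two cases, so I would organize the proof as a case analysis on whether $\Gamma$ is infinite or whether the arithmetic condition fails.

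In the \emph{infinite} case, I would first reduce to the situation where $\Gamma$ contains an element $\alpha_1$ of infinite order: if every element has finite order but $\Gamma$ is infinite, then $\Gamma$ contains elements of arbitrarily large finite order, and one picks $\alpha_1$ of order larger than (roughly) $n$ so that within the grid of size $n$ the partial sums $k\alpha_1$ for $0\le k\le n$ are all distinct; either way, no two distinct counts of top-row edges give the same total weight contribution. Then I set $\alpha_2,\alpha_3$ so that $\alpha_2+\alpha_3+\alpha_1=\gamma$ while $\alpha_2+\alpha_3\neq\gamma$ and $\alpha_2+\alpha_3+k\alpha_1\neq\gamma$ for $k\ge 2$ — for instance $\alpha_2=\alpha_3=0$ if $\alpha_1=\gamma$ works, and in general one takes any $\alpha_2,\alpha_3$ with $\alpha_2+\alpha_3=\gamma-\alpha_1$; distinctness of the $k\alpha_1$ then forces exactly one top-row edge. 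A path realizing $\gamma$ exists by going from an $A$-vertex across one left edge, along a single top-row edge, and out a right edge.

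In the \emph{arithmetic-failure} case, we are given $x,y\in\Gamma$ with $y\neq 0$ and $2x+ny\neq\gamma$ for all $n\in\Z$. Here I would set $\alpha_1=y$ (the per-top-row-edge weight) and $\alpha_2=\alpha_3=x$ (the left and right attachment weights). Any $A$-path that uses one left edge, $k$ top-row edges, and one right edge has weight $2x+ky$, which by hypothesis is never $\gamma$; any $A$-path using two left edges (or two right edges, or a ``$U$-turn'') has weight $2\alpha_2 + (\text{stuff})$ that I would argue also cannot equal $\gamma$ by the same hypothesis applied with the appropriate $n$ (the only weights achievable by $A$-paths in this graph are of the form $2x+ny$ or $0$ or $2\alpha_i$, all of which are covered — this is the routine check the text waives). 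Meanwhile a $\gamma$-path \emph{does} exist only if $\gamma$ itself is achievable, which here it is not; so actually one must be slightly more careful and instead exhibit the ``almost-$\gamma$'' path with $\le 1$ top-row edge, matching the template the excerpt uses: the obstruction is genuinely that $\gamma$ is unreachable, so the counterexample family has \emph{zero} $\gamma$-paths, and trivially fails the \EPP\ unless — wait, with zero such subgraphs the property holds vacuously. So the correct reading is that one must relabel so that $\gamma$ \emph{is} reachable but only via long paths; I would therefore instead pick the labels so that $\alpha_2+\alpha_3 + \alpha_1^{(j)}$ schemes realize $\gamma$ using the whole left-to-right traversal (length $\Theta(n)$) but never using a short one, exactly as in \cite{BHJ18b,BU18}, deferring the verification to those references.

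The main obstacle is pinning down, in the arithmetic-failure case, the precise choice of $\alpha_1,\alpha_2,\alpha_3$ so that a $\gamma$-path exists but every $\gamma$-path is ``long'' (traverses $\Omega(n)$ of the grid) and every pair of them shares a vertex — this is where the hypothesis ``$2x+ny\neq\gamma$ for all $n$'' must be used not just to forbid short paths but to force the unique connectivity pattern; once that labelling is fixed, the non-existence of two disjoint $\gamma$-paths and the non-existence of a small hitting set are exactly the standard grid argument of \cite{BHJ18b} and \cite{BU18}, which I would invoke rather than reprove.
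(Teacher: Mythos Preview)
Your proposal has two genuine gaps, one in each case.

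\textbf{Arithmetic-failure case.} Your choice $\alpha_2=\alpha_3=x$ is the wrong one, and you notice this yourself: with that labelling every left-to-right $A$-path has weight $2x+ky$, which by hypothesis is never $\gamma$, so there are no $\gamma$-paths at all and the \EPP\ holds vacuously. You then say ``one must relabel'' but do not say how, and this relabelling is precisely the content of the argument. The paper's choice is asymmetric: $\alpha_1=y$, $\alpha_2=x$, and $\alpha_3=-x-y+\gamma$. Then a left-to-right path picking up exactly one top-row edge has weight $x+y+(-x-y+\gamma)=\gamma$, so such paths exist; a left-to-right path missing the top row has weight $\gamma-y\neq\gamma$; a path with both ends on the left has weight $2x+ny$; and a path with both ends on the right has weight $2(-x-y+\gamma)+ny=-2x+(n-2)y+2\gamma$, which equals $\gamma$ only if $\gamma=2x+(2-n)y$, again excluded. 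The point is that the hypothesis $2x+ny\neq\gamma$ is used to rule out the \emph{same-side} paths, not the left-to-right ones.

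\textbf{Infinite case.} Your reduction ``if every element has finite order but $\Gamma$ is infinite, then $\Gamma$ contains elements of arbitrarily large finite order'' is false: take $\Gamma=\bigoplus_{i\in\N}\Z_2$, where every nonzero element has order exactly $2$. The paper does not try to find a single high-order $\alpha_1$; instead it shows that in almost every infinite case one can manufacture $x,y$ with $2x+ny\neq\gamma$ for all $n$ and thereby reduce to the first case. The one genuinely exceptional situation is $\Gamma_2$ infinite and $\gamma=0$, where the paper uses a different construction altogether: it labels the left attachments $x_1,\dots,x_n$ by distinct elements of $\Gamma_2$ and the right attachments $x_n,\dots,x_1$ in reverse order (with $\alpha_1=0$), so that any zero $A$-path must match an $x_i$ on the left with the unique $x_i$ on the right, and the inverted ordering forces any two such paths to cross.
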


\begin{proof}
Suppose there are $x,y\in\Gamma$ with $y\neq 0$ such that for all integers $n$ the equation $2x+ny=\gamma$ is never satisfied. We set $\alpha_1=y$,$\alpha_2=x$ and $\alpha_3=-x-y+\gamma$ in the counterexample graph we defined before.
Observe that a path $P$ that starts on the left side, goes to the right side and picks up one edge in the top row is an $A$-path of weight $\gamma$.

Now let $P$ be any $A$-path. If $P$ goes from the left side to the right side without intersecting the top row, then its weight is $\gamma-y$ which is not $\gamma$ since $y\neq 0$. If both endvertices of $P$ are on the left side, then its weight is $2x+ny$ for some positive integer $n$. By assumption, this is distinct from $\gamma$ for any $n$. Lastly suppose that both endvertices of $P$ are on the right side, then the weight of $P$ is $-2x+(n-2)y+2\gamma$ for some positive integer $n$. Suppose that the weight of $P$ is $\gamma$. It follows that $\gamma=2x+(-n+2)y$, which is a contradiction. It follows that if $P$ is an $A$-path of weight $\gamma$, then it goes from the left to the right side and picks up at least one edge in the top row. With the remark before the lemma, we are done.

From now on assume that $\Gamma$ is infinite.
Suppose that $\Gamma_2$ is infinite, too, and that $\gamma=0$.
Let $(x_1,x_2,\ldots)$ be an enumeration of $\Gamma_2$.
We construct a graph $G_n$ as follows:
Take the counterexample of size $n$ as above with $\alpha_1=0$,
but differing from the construction before, we label the edges on the left side with $x_1,\ldots,x_{n}$ from top to bottom and on the right side with $x_{n},\ldots,x_1$ from top to bottom. 
Any zero $A$-path starts with an edge of weight $x_i$ and because all edges in the grid are labelled with $0$ it has to end with the edge with weight $x_i$ on the other side. Because of the inverted order $x_1,\ldots,x_{n}$ on both sides, no two disjoint zero $A$-paths exist in this graph. On the other hand, no set of at most $\frac{n}{10}$ can intersect all zero $A$-paths. We use here that there is a zero $A$-path that picks up at most one edge in the top row of the grid, therefore, the note before the lemma can be applied here as well.
This means that in this case the $A$-paths of weight $\gamma$ do not have the \EPP.

In any other case we can show that $\Gamma$ contains elements $x$ and $y\neq 0$ such that $2x+ny=\gamma$ is never satisfied for any integer $n$.

If $\Gamma_2$ is infinite and $\gamma\neq 0$, then let $x_1,x_2\in\Gamma_2$ such that $x_i\neq \gamma$. It follows that $2x_1+nx_2\in\{0,x_2\}$ and by choice, this never yields $\gamma$.

Now we may assume that $\Gamma$ contains infinitely many elements of order larger than $2$. Suppose there are two element $x',y'$ of infinite order, that is $nx'\neq 0$ for all $n\in\Z\setminus\{0\}$ and the same for $y'$. It follows that there is an $n_0$ such that $ny\neq -2x+\gamma$ for all $|n|\geq n_0$. If $2x'=\gamma$, we set $x=2x'$ otherwise we set $x=x'$. In any case, it holds that $2x'\neq \gamma$. We set $y=n_0y'$ and now the equation $2x+ny=\gamma$ is not true for $n=0$ because of the choice of $x$ and for any $n\neq 0$ it holds that $2x+ny=2x+nn_0y\neq \gamma$ by choice of $n_0$. Thus, $2x+ny\neq \gamma$ for all $n$.

Lastly, we may assume that $\Gamma$ contains two elements $x,y$ of finite order but of order at least $3$ such that the subgroups of $\Gamma$ induced by $x$ and $y$ intersect only in $0$. (The subgroup induced by an element $x$ are all the elements $x'$ that are multiples of $x$, that is $x'=nx$ for an integer $n$.) 
Additionally, if $\gamma\neq 0$, choose $x$ such that the subgroup induced by $x$ does not contain $\gamma$.
The reason why we find such $x$ and $y$ is because their order is finite. For any element in the subgroup induced by $x$ or $y$, also its inverse is in the same subgroup. If $\gamma=0$, then $2x+ny=0$ implies that the inverse of $2x$ is $-ny$. This is only possible if $2x=-ny=0$, which contradicts the fact that the order of $x$ is larger than $2$. If $\gamma\neq 0$, then $2x+nx\neq \gamma$ for any $n$ by choice of $x$. Altogether, if $\Gamma$ is infinite, then for any $\gamma\in\Gamma$, the $A$-paths of weight $\gamma$ do not have the \EPP.
\end{proof}

\section{Sufficiency of Theorem~\ref{mainThm}}
Before we start with the proof, we need two more lemmas on how to obtain zero $A$-paths in some special cases.

\subsection{Constructing zero $A$-paths}
In the following, let $\cT_{EP}$ be a tangle of sufficiently large size such that for $(C,D)\in\cT_{EP}$, any zero $A$-path has to intersect $D-C$.

Let $G$ be a graph and $A\subseteq V(G)$.
Let $W$ be a wall in $G$ with nails $N$ that is disjoint from $A$.
We say that a set of disjoint paths $\cP$ from $A$ to $N$ \emph{nicely links} $A$ to $W$ if the paths in $\cP$ are disjoint from $W-N$.

The following lemma has been proven in \cite{BU18},
although this is a quite general lemma, which means it might have appeared elsewhere, too.
\begin{lemma}[Bruhn and U. \cite{BU18}]\label{connectA}
Let $G$ be a graph and let $A\subseteq V(G)$.
Let $W$ be a wall such that $\cT_W$ is a truncation of $\cT_{EP}$.
For any positive integers $r$ and $t$ there is an integer $f_{\ref{connectA}}(r,t)$ such that if the size of $W$ is at least $f_{\ref{connectA}}(r,t)$, then there is a subwall $W_0$ of $W$ of size at least $r$ and a set $\cP$ of size $t$ that nicely links $A$ to $W_0$.
\end{lemma}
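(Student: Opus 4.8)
\medskip
\noindent\emph{Proof idea.}
The plan is to argue in two phases. First I would use the tangle $\cT_{EP}$ to show that $A$ is richly connected to $W$, producing many pairwise disjoint $A$-$V(W)$-paths. Then I would use only the planarity and high connectivity of the wall $W$ itself to bend $t$ of these paths onto the nails of a small subwall $W_0$ of $W$.

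\medskip
\noindent\textbf{Phase 1: many disjoint $A$-$V(W)$-paths.}
For a suitable $h=h(r,t)$ I would first show that $G$ contains $h$ pairwise disjoint $A$-$V(W)$-paths, provided the size of $W$ is large enough (this is the bound that $f_{\ref{connectA}}(r,t)$ must exceed). Suppose not. By Menger's theorem there is a set $S$ with $|S|<h$ meeting every $A$-$V(W)$-path. Deleting $S\setminus A$ from $G-A$ splits off the component $D^*$ of $G-(A\cup S)$ that the tangle $\cT_W$ regards as large (this makes sense as $|S|<h$ is below the order of $\cT_W$); since the size of $W$ exceeds $h$, some entire row of $W$ avoids $S$ and hence lies in $D^*$, so the separation $(C,D)$ of $G-A$ with $D\supseteq D^*$ belongs to $\cT_W$ and therefore, $\cT_W$ being a truncation of $\cT_{EP}$, to $\cT_{EP}$. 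By the defining property of $\cT_{EP}$, every zero $A$-path meets $D-C$. But both ends of a zero $A$-path lie in $A$: if one of them lies on the separator, which is contained in $S$, then $S$ meets the path; otherwise both lie strictly on the $C$-side, and since a zero $A$-path has no interior vertex in $A$, its interior is a connected subgraph of $G-A$ joining two $C$-side vertices and meeting $D-C$, hence it crosses the separator and again meets $S$. Thus $S$ is a hitting set for the zero $A$-paths of size $<h$ --- impossible in the minimal-counterexample setting in which $\cT_{EP}$ lives, once $h$ is chosen below the hitting-set threshold. So the $h$ paths exist.

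\medskip
\noindent\textbf{Phase 2: funnelling onto a subwall.}
Fix such disjoint $A$-$V(W)$-paths $Q_1,\dots,Q_h$; each meets $W$ exactly in its terminal vertex $e_i$. Since the $e_i$ occupy a bounded number of rows and columns of the huge wall $W$, I may pick a subwall $W_0$ of $W$ of size at least $\max\{r,2t\}$, with nails $N_0$, that is disjoint from all the $e_i$. It then remains to route, inside $W$, $t$ pairwise disjoint paths from $t$ of the $e_i$ to $t$ distinct vertices of $N_0$, avoiding $W_0-N_0$ and the other $e_j$; concatenating these with the corresponding $Q_i$ yields a set $\cP$ of $t$ disjoint paths from $A$ to $N_0$ avoiding $W_0-N_0$, i.e.\ $\cP$ nicely links $A$ to $W_0$. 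Such a routing exists by Menger's theorem unless fewer than $t$ vertices separate $N_0$ from $\{e_1,\dots,e_h\}$ in the relevant subgraph of $W$; because $W$ is a wall, such a separator would confine either $N_0$ or the set of $e_i$ to a part of $W$ bounded by fewer than $t$ vertices, and $N_0$ --- being the nail set of a subwall of size at least $2t$ --- admits no such confinement. So the obstruction would force all the $e_i$ into a small region of $W$, essentially onto a handful of subdivided edges; this can be excluded by choosing $Q_1,\dots,Q_h$ with care, e.g.\ subject to first minimising the number of edges outside $E(W)$ and rerouting along $W$ wherever profitable, exactly in the spirit of the proof of Theorem~\ref{primeWall}.

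\medskip
\noindent\textbf{Where the difficulty lies.}
The conceptual core is Phase 1: one has to orient the tangle so that $W$ lies on its large side and every zero $A$-path reaching that side is forced through the small separator $S$, turning $S$ into a hitting set and contradicting minimality; the bookkeeping tying $h(r,t)$ simultaneously to the required size of $W$ and to the hitting-set threshold also belongs here. The technically most delicate step is the one indicated at the end of Phase 2 --- ruling out that the terminal vertices of the $A$-$V(W)$-paths all get trapped in a small region of $W$ --- which is exactly where the extremal choice of the paths and the accompanying rerouting are really needed.
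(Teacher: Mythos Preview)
The paper does not prove this lemma --- it is quoted verbatim from \cite{BU18} and no argument is given here. So there is no in-paper proof to compare against; I can only comment on whether your sketch is a plausible route to such a proof.

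Your two-phase outline is the natural strategy. Phase~1 is essentially sound: a small $A$--$V(W)$ separator $S$ yields a separation $(C,D)$ of $G-A$ with $W$ on the $D$-side; since $|S|$ is below the order of $\cT_W$, this lies in $\cT_W\subseteq\cT_{EP}$; every zero $A$-path then meets $D-C$, and since every vertex of $A$ adjacent to $D-C$ must lie in $S$ (otherwise one could reach $V(W)$ avoiding $S$), every zero $A$-path meets $S$. One caveat: the final contradiction needs $|S|<h\le f(k)$, while $h$ is declared as a function of $r,t$ only. In all applications $r,t$ are themselves functions of $k$ and $f(k)$ is subsequently chosen large enough, so this is harmless in context --- but it does mean the bound $f_{\ref{connectA}}(r,t)$ is only meaningful once the ambient counterexample $(G,A,k)$ has been fixed.

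Phase~2 is where the real content lies, and your sketch is honest about that. The extremal-choice-plus-rerouting idea is the right instinct, but as written it does not yet exclude the degenerate situation: minimising edges outside $E(W)$ can still leave all the terminal vertices $e_i$ on one or two subdivided edges of $W$, since rerouting along a subdivided edge only offers two target branch vertices. What is actually needed is an additional step --- for instance, first prolonging each $Q_i$ inside $W$ to a branch vertex, arguing (via the minimisation) that only boundedly many $Q_i$ can land on the same subdivided edge, then using pigeonhole to extract $t$ paths whose termini lie in pairwise distinct rows or columns, and only \emph{then} choosing $W_0$ to avoid those rows and columns. With that in hand the Menger routing to $N_0$ is straightforward; without it the argument has a genuine gap at precisely the point you flag as ``most delicate''.
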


Clearly, if we choose the size of $W$ to be $f_{\ref{connectA}}(r,t)+2n$, then $W_0$ may be chosen to be $n$-contained in $W$.

The next lemma tells us that we can make paths that nicely link $A$ to a wall $W$ and a linkage $W$ disjoint. The outer cycle of a wall is the union of the first and last row and the first and last column. 

\begin{lemma}[Bruhn and U. \cite{BU18}]\label{disjointPathLink}
Let $t\geq 2$ be a positive integer, and let {$H$} be a graph, let $A\subseteq {V(H)}$ and let
$W$ be a wall.
Let $\mathcal P$ be a set of $3t$ disjoint paths that nicely links $A$ to $W$,
and let $L$ be a linkage for $W$ of size $6t$.
Then there is a set $\mathcal P'$ of size $t$ that nicely links $A$ to $W$,
and a subset $L'$ of $L$ of size $t$ such that the paths in $\mathcal P'\cup L'$
are pairwise disjoint. Moreover, there is an edge $e$ in the outer cycle $C$ of $W$
such that the endvertices of the paths in $\mathcal P'$ preceed the endvertices of the paths in $L'$
in the path $C-e$.
\end{lemma}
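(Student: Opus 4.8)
The plan is to obtain the disjointness and the ordering in two separate stages: first make $\mathcal{P}$ and a sub-collection of $L$ jointly disjoint by a single application of Lemma~\ref{twotypespaths}, and then carve out sub-families $\mathcal{P}'$ and $L'$ with the required order along $C$ by a pigeonhole argument on the positions of the nails. Observe at the outset that the wall $W$ barely enters: everything happens inside $H-(W-N)$, so any path assembled from pieces of $\bigcup\mathcal{P}\cup\bigcup L$ automatically avoids $W-N$; consequently any family of pairwise disjoint paths from $A$ to $N$ obtained this way nicely links $A$ to $W$. The wall is used only through the fixed cyclic order in which the nails $N$ lie on $C$.

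For the disjointness I would orient the paths of $L$, calling one endpoint of each its head. Since $L$ is a linkage, the heads form a set $A'\subseteq N$ and each path of $L$ becomes an $A'$-$N$-path. Now apply Lemma~\ref{twotypespaths} with its ``$t$'' equal to our $3t$, with ground set $X:=N$, with $A'$ in the role of its ``$A$'' and our set $A$ in the role of its ``$B$'', taking all $6t$ paths of $L$ as $\cQ$ and all $3t$ paths of $\mathcal{P}$ as $\cR$. It returns $6t$ pairwise disjoint paths: $3t$ of the $L$-paths unchanged, forming a set $L^\ast$, together with $3t$ paths from $A$ to $N$ contained in $\bigcup L\cup\bigcup\mathcal{P}$, forming a set $\mathcal{P}^\ast$; by the observation above $\mathcal{P}^\ast$ nicely links $A$ to $W$. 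The $3t$ nails of $\mathcal{P}^\ast$ and the $6t$ nails of $L^\ast$ are then $9t$ distinct vertices of $C$.

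For the ordering, the $3t$ nails of $\mathcal{P}^\ast$ split $C$ into $3t$ arcs, among which the $6t$ nails of $L^\ast$ are distributed. Call a \emph{window} a union of $t$ consecutive such arcs, together with the $t-1$ $\mathcal{P}^\ast$-nails between them and exactly one of the two $\mathcal{P}^\ast$-nails at its ends; a window is again an arc of $C$ and contains exactly $t$ of the $\mathcal{P}^\ast$-nails. There are $3t$ windows, and each nail of $L^\ast$ lies in exactly $t$ of them, so on average a window meets $2t$ of the $L^\ast$-nails; fix a window $I$ meeting at most $2t$ of them. Then at least $3t-2t=t$ paths of $L^\ast$ have both nails in the complementary arc $\overline{I}$. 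Let $\mathcal{P}'$ consist of the $t$ paths of $\mathcal{P}^\ast$ whose nail lies in $I$, and let $L'$ consist of $t$ paths of $L^\ast$ with both nails in $\overline{I}$. These are sub-families of $\mathcal{P}^\ast\cup L^\ast$, hence pairwise disjoint; $L'\subseteq L$; and $\mathcal{P}'$ nicely links $A$ to $W$. Finally take $e$ to be the edge of $C$ incident with the distinguished boundary nail of $I$ that does not lie inside $I$; then reading $C-e$ from that nail we meet all of $I$ before all of $\overline{I}$, hence all nails of $\mathcal{P}'$ before all nails of $L'$, which is the ``moreover''.

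The point that needs care is the pigeonhole count, which is exactly tight: the average window meets precisely $2t$ of the $L^\ast$-nails, so one has to be scrupulous about how a window of arcs is turned into an honest arc of $C$ (this is why one boundary $\mathcal{P}^\ast$-nail is absorbed into it) and about the choice of $e$. The remaining verifications are routine --- that the reoriented $L$-paths are genuine $A'$-$N$-paths, so that Lemma~\ref{twotypespaths} is applicable in the form used above, and that the paths it returns are honest $A$-$N$-paths avoiding $W-N$.
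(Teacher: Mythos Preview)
The paper does not prove Lemma~\ref{disjointPathLink}; it is quoted from \cite{BU18} without proof, so there is no in-paper argument to compare against.

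Your argument is correct. The application of Lemma~\ref{twotypespaths} with parameter $3t$ (so $|\cQ|=6t=|L|$ and $|\cR|=3t=|\mathcal P|$) legitimately returns $3t$ unchanged linkage paths $L^\ast\subseteq L$ together with $3t$ rerouted $A$--$N$-paths $\mathcal P^\ast$, all pairwise disjoint and all contained in $\bigcup\mathcal P\cup\bigcup L\subseteq H-(W-N)$; hence $\mathcal P^\ast$ nicely links $A$ to $W$, which is all the statement demands (note that $\mathcal P'$ is not required to be a subset of $\mathcal P$). The averaging step is tight but sound: each of the $6t$ nails of $L^\ast$ lies in exactly $t$ of the $3t$ windows, so some window $I$ meets at most $2t$ of them, forcing at least $3t-2t=t$ paths of $L^\ast$ to have both endpoints in $\overline I$. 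Your choice of $e$ as the edge incident with the absorbed boundary nail on the $\overline I$-side then makes $I$ an initial segment of $C-e$, giving the ordering clause.
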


There are basically two outcomes here: All endvertices of $\cP'$ come before or after all endvertices of $L'$ in the top row of $W$ (this is almost the same since walls are essentially symmetrical) or they come inbetween two consecutice endvertices of $L'$.

When we apply Theorem \ref{primeWall}, one of the outcomes is a zero wall with a non-zero linkage. In the next lemma we deal with that case.

\begin{lemma}\label{nonzeroLinkCase}
Let $\Gamma$ be finite with $m=|\Gamma|$ and such that for any $x,y\in\Gamma$ with $y\neq 0$, there is an $n\in\Z$ such that $2x+ny=0$.
Let $G$ be a graph and let $A\subseteq V(G)$.
For any positive integer $k$, there are integers $f_{\ref{nonzeroLinkCase}}(m,k)$ and $g_{\ref{nonzeroLinkCase}}(m,k)$ such that if $W$ is a zero wall of size $f_{\ref{nonzeroLinkCase}}(m,k)$ in $G-A$ such that $\cT_{W}$ is a truncation of $\cT_{EP}$  and $L'$ is a pure non-zero linkage $L$ of size $g_{\ref{nonzeroLinkCase}}(m,k)$ for $W$, then there are $k$ disjoint zero $A$-paths in $G$.
\end{lemma}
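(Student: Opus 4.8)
The plan is to exploit the pure non-zero linkage $L'$ together with the condition "$2x+ny=0$" to manufacture zero $A$-paths. First I would use Lemma \ref{connectA} (valid since $\cT_W$ is a truncation of $\cT_{EP}$): after discarding a bounded number of boundary rows and columns, there is a subwall $W_1$ of $W$, still of large size and still carrying a large sub-linkage of $L'$ (the non-zero paths of $L'$ that avoid the discarded part), and a set $\cP$ of many disjoint paths nicely linking $A$ to $W_1$. Then I would invoke Lemma \ref{disjointPathLink} to make a bounded number $t$ of the $A$-to-$W_1$ paths simultaneously disjoint from $t$ of the non-zero linkage paths, with the endvertices of $\cP'$ and of $L'$ appearing in a controlled order on the outer cycle. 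The upshot is the following clean local picture: we have a large zero subwall, a controlled family of disjoint non-zero paths attached at its nails, and a controlled family of disjoint $A$-paths attached at its nails, and inside the zero wall we may route freely because every subdivided edge is zero.

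Next I would show how to build a single zero $A$-path from this picture, and then argue we can do this $k$ times disjointly by starting with $t$ large enough (linear in $k$ times a constant depending on $m$). To build one zero $A$-path: pick two of the $A$-paths $Q_1,Q_2\in\cP'$ reaching nails $a_1,a_2$, and pick one non-zero linkage path $R\in L'$ with nails $b_1,b_2$ and weight $y\neq 0$. Route inside the zero wall from $a_1$ to one endpoint of $R$, traverse $R$, route from the other endpoint back toward $a_2$; since all wall routing is zero, the $A$-path $Q_1\,(\text{wall})\,R\,(\text{wall})\,Q_2$ has weight $w(Q_1)+w(Q_2)+y$ — call this $x' + y$ where $x'=w(Q_1)+w(Q_2)$ is out of our control. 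That is not yet zero. The fix is to use several copies of non-zero linkage paths: if we may traverse $R$ (or several nested/series linkage paths of weight $y$) $n$ times we accumulate $x'+ny$, and the hypothesis guarantees an $n$ with $2(\tfrac{x'}{2})+ny=0$ — but $x'$ need not be of the form $2x$. The honest route around this is to also use the $A$-paths in pairs so that $x'$ itself absorbs into a "$2x$" shape: concretely, I would pair up the two $A$-path stubs so that one contributes $w$ and, by a symmetric routing on the other side, arrange the total to have the form $2x + n y$ for a single $x$ and the common linkage weight $y$; then the hypothesis finishes it. Making this symmetry precise is where the wall's structure (rows/columns, encapsulating cycles, and the in-series/nested/crossing dichotomy for $L'$) is used to reroute so that the two "halves" are mirror images and genuinely contribute equal amounts $x$.

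Finally, disjointness across the $k$ resulting zero $A$-paths: since each zero $A$-path uses only a bounded number of nails of $W_1$ (two $A$-stubs, and at most $m$ linkage paths to realize the multiplier $n$, as $n$ can be taken in $\{0,\dots,m-1\}$ because $\Gamma$ is finite), and since we may take $t$ on the order of $(m+2)k$ disjoint $A$-stubs and correspondingly many disjoint linkage paths, we can carve the wall into $k$ disjoint "blocks" (disjoint sets of consecutive rows/columns of $W_1$, each still a zero wall large enough to host the internal routing), assign a fresh batch of $A$-stubs and linkage paths to each block, and run the single-path construction independently in each. Choosing $f_{\ref{nonzeroLinkCase}}(m,k)$ and $g_{\ref{nonzeroLinkCase}}(m,k)$ large enough (via $f_{\ref{connectA}}$, the constants in Lemma \ref{disjointPathLink}, and the factor $(m+2)k$) makes everything fit. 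The main obstacle I expect is the second paragraph: getting the uncontrolled quantity $x'$ coming from the two $A$-path stubs into the required form $2x$ so that the algebraic hypothesis "$\exists n:\,2x+ny=0$" actually applies — this needs a careful symmetric rerouting of the two halves of the $A$-path through the zero wall (and possibly using that, inside a large zero wall between two nails in the same row or column, one has two internally disjoint zero connections, so the two stubs can be made to play genuinely symmetric roles).
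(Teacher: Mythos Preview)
Your overall architecture matches the paper's: apply Lemma~\ref{connectA} to get many $A$-to-nail paths, apply Lemma~\ref{disjointPathLink} to make them disjoint from a large sub-linkage with controlled order on the top row, and then build each zero $A$-path as ``$A$-stub $+$ zero-wall routing $+$ $n$ linkage paths $+$ zero-wall routing $+$ $A$-stub'' with weight $2x+ny$.

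The genuine gap is the second paragraph, exactly where you flag it yourself. You correctly observe that the resulting weight is $w(Q_1)+w(Q_2)+ny$ and worry that $x'=w(Q_1)+w(Q_2)$ need not be of the form $2x$. Your proposed fix --- a ``symmetric rerouting'' so the two halves are mirror images and contribute equal amounts --- is not made precise, and it is hard to see how wall geometry alone could force $w(Q_1)=w(Q_2)$ when these weights come from paths that leave the wall entirely. The paper's fix is much simpler and you are missing it: since $|\Gamma|=m$, pigeonhole on the $6m^2k$ paths in $\cP$ gives $6mk$ of them with the \emph{same} weight $x$; using two of those as $Q_1,Q_2$ gives $w(Q_1)+w(Q_2)=2x$ on the nose. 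The same pigeonhole on the $12m^2k$ linkage paths yields $12mk$ of them with a common non-zero weight $y$ (you implicitly assume a ``common linkage weight $y$'' but never justify it). With both pigeonhole steps in place the algebraic hypothesis applies directly, and since $y$ has order at most $m$ one may take $1\le n\le m$, which is what makes your ``at most $m$ linkage paths per zero $A$-path'' bookkeeping in the last paragraph legitimate.

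In short: drop the symmetric-routing idea and insert two pigeonhole steps (one on $\cP$, one on $L$) right after Lemma~\ref{connectA}; then the rest of your plan goes through essentially as in the paper.
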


\begin{proof}
Set $f_{\ref{nonzeroLinkCase}}(m,k)=f_{\ref{connectA}}(50m^2k,6m^2k)+100mk$ and $g_{\ref{nonzeroLinkCase}}(m,k)=12m^2k$.
We begin by applying Lemma \ref{connectA} to $W$ to obtain a subwall $W_0$ of $W$ of size at least $50m^2k$ and a set of paths $\cP$ of size $6m^2k$ that nicely link $A$ to $W_0$. Additionally, we may assume that $W_0$ is $50mk$-contained in $W$. Let $N_0$ be the nails of $W_0$.

At least $\frac{|\cP|}{m}=6mk$ paths in $\cP$ have the same weight $x$; remove all paths from $\cP$ but $6mk$ paths of weight $x$.
Also in $L$, at least $\frac{|L|}{m}=12mk$ many paths have the same non-zero weight, say $y$; remove all paths in $L$ but $12mk$ paths of weight $y$. By assumption, there is an $n$ such that $2x+ny=0$. Since the size of $\Gamma$ is $m$, the order of $y$ is at most $m$, which means there is an $1\leq \ell\leq m$ such that $\ell y=0$. This implies that we may choose $1\leq n\leq m$ so that $2x+ny=0$; we do that.

Since $W_0$ is $50mk$-contained, there is a set of disjoint paths in $W-(W_0-N_0)$ that connects the endvertices of $L$ to $N_0$ while retaining the order of the endvertices in the top row of $W$. This yields a pure linkage $L'$ for $W_0$ of size $12mk$ that has the same type as $L$. Moreover, as $W$ is a zero wall, all paths in $L'$ have the same weight as the paths in $L$, that is $y$.

Now apply Lemma \ref{disjointPathLink} to $\cP$ and $L$ to obtain a set $\cP'$ of size at least $2k$ that nicely links $A$ to $W_0$ and a set $L'\subseteq L$ of size at least $mk$ such that all paths in $\cP'\cup L'$ are pairwise disjoint. Furthermore, there is an edge $e$ in the outer cycle $C$ such that all endvertices of $\cP'$ come before $L'$ in $C-e$. As we have mentioned before, there are essentially two outcomes here: All endvertices of $\cP'$ come before or after all endvertices of $L'$ or inbetween two consecutive endvertices. The idea now is to take a path from $\cP'$, then pick up $n$ of the linkage path and end with another path in $\cP'$. Since $W_0$ is a zero wall, the weight of this path is $2x+ny=0$, which means we found a zero $A$-path.

Assume that the endvertices of $\cP'$ come before the endvertices of $L'$ in the top row of $W_0$ and assume that $L$ is is in series.
All the other cases can be done by slightly adjusting the argument.
Let $P_1,\ldots,P_{2t}$ be the paths in $\cP'$ ordered according to their endvertices in the top row of $W_0$ and let $L_1,\ldots,L_{mk}$ be the paths in $L'$ ordered according to their left endvertex. 
We construct a set $\cQ$ of disjoint paths from the nails of $W_0$ to the last row. Starting in a nail, follow the top row of $W_0$ to the left until we meet the first branch vertex, then follow the column in which this branch vertex lies to the last row.
Now for $i\in [k]$, connect the endvertices of $P_{2i}$ and $L_{(t-i)m+1}$ through the two paths in $\cQ$ that start in these endvertices and the $(2(t-i)+1)^{st}$ row. Connect the right endvertex of $L_{(t-i)m+1}$ to the left endvertex of $L_{(t-i)m+2}$ through the top row of $W_0$ and then the right endvertex of $L_{(t-i)m+2}$ to the left endvertex of $L_{(t-i)m+3}$. Continue this until $n$ linkage paths are connected to $P_{2i}$. Lastly, connect the right endvertex of $L_{(t-i)m+n}$ to the endvertex of $P_{2i-1}$ through the two paths in $\cQ$ that share their endvertices and the $(2i-1)^st$ row of $W_0$. This gives us an $A$-path $P_i'$.

The paths $P_1',\ldots,P_k'$ are disjoint by construction.
The length of each path $P_i'$ is $2x+ny$ which is zero. Therefore, we are done.

\begin{figure}[ht]
\centering
\begin{tikzpicture}
\def\size{4}
\def\height{0.8}
\tikzstyle{vx}=[thick,circle,inner sep=0.cm, minimum size=1.6mm, fill=white, draw=black]
\tikzstyle{A}=[thick,circle,inner sep=0.cm, minimum size=1.6mm, fill=black, draw=black]

\draw[gray,dashed] (0,0)--(10.5,0);

\foreach \i in {1,...,\size}{
		\node[A] (a\i) at (\i*\height,2){};
		\node[vx] (n\i) at (\i*\height,0){};
		\draw[gray,dashed] (a\i) -- (n\i);
}

\foreach \i in {1,...,\size}{
		\pgfmathsetmacro\j{int(\i)}
		\pgfmathsetmacro\k{int(\i)}
		\pgfmathsetmacro\iplus{(2*\j)-1+\size)*\height};
		\pgfmathsetmacro\iminus{(2*\k+\size)*\height};
		\node[vx] (l\i) at (\iplus,0){};
		\node[vx] (r\i) at (\iminus,0){};
		\draw[gray,dashed] (l\i) to[out=90,in=90] (r\i);
}

\draw[thick] (a2) -- (n2) -- (1.2,0) -- (1.2,-0.8) -- (1.6,-0.8) -- (1.6,-1.6) -- (1.2,-1.6) -- (1.2,-2.4);

\draw[thick] (a3) -- (n3) -- (2,0) -- (2,-0.8) -- (2.4,-0.8) -- (2.4,-1.6);

\draw[thick] (l2) -- (5.2,0) -- (5.2,-0.8) -- (5.6,-0.8) -- (5.6,-1.6);

\draw[thick] (r3) -- (7.6,0) -- (7.6,-0.8) -- (8,-0.8) -- (8,-1.6) -- (7.6,-1.6) -- (7.6,-2.4);

\draw[thick] (1.2,-2.4) -- (7.6,-2.4);
\draw[thick] (2.4,-1.6) -- (5.6,-1.6);
\draw[thick] (l2) to[out=90,in=90] (r2);
\draw[thick] (l3) to[out=90,in=90] (r3);
\draw[thick] (r2) -- (l3);

\end{tikzpicture}
\caption{This is how we construct the zero $A$-paths.}\label{zeroA}
\end{figure}
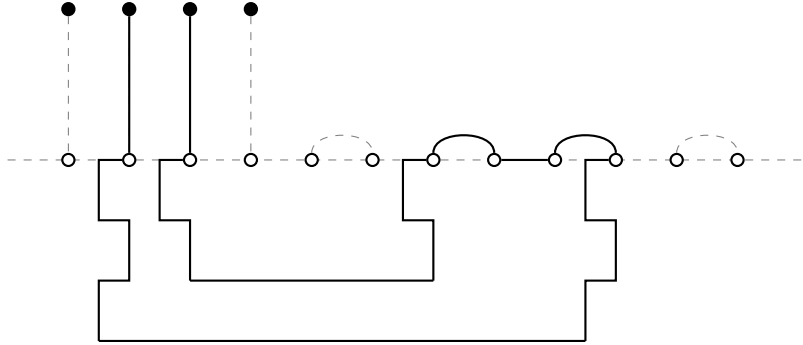

\end{proof}

\begin{lemma}\label{many-x-paths}
Let $G$ be a graph and let $A\subseteq V(G)$.
Let $W$ be a wall in $G-A$ with branch vertices $B$ such that all $B$-paths in $G-A$ are zero. Moreover, suppose that $\cT_W$ is a truncation of $\cT_{EP}$ and assume that the $\gamma$-paths from $A$ to any subset of $B$ have the \EPP\ with hitting set function $h$.
For any positive integer $k$, there are integers $f_{\ref{many-x-paths}}(k)$ and $g_{\ref{many-x-paths}}(k)$ such that if the size of $W$ is at least $f_{\ref{many-x-paths}}(k)$ and there are $g_{\ref{many-x-paths}}(k)$ $\gamma$-paths from $A$ to $B$ as well as that many $(-\gamma)$-paths from $A$ to $B$, then there are $k$ disjoint zero $A$-paths in $G$.
\end{lemma}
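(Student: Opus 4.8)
The plan is to realise each of the $k$ zero $A$-paths as the concatenation of three pieces: a $\gamma$-path $P$ from $A$ to a branch vertex of a large zero subwall $W_{0}$ of $W$, a path $R$ inside $W_{0}$ between two of its branch vertices, and a $(-\gamma)$-path $Q$ from a branch vertex of $W_{0}$ back to $A$. Since every $B$-path in $G-A$ is zero, each subdivided edge of $W$ is zero, hence $W$ and every subwall of it is a zero wall, and any path inside such a subwall running between two branch vertices decomposes into zero $B$-paths, so is itself zero. Thus such a concatenation has weight $\gamma+0+(-\gamma)=0$, and provided it joins two distinct vertices of $A$ and meets $A$ only in its endvertices it is a zero $A$-path. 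The whole task is therefore to produce $k$ pairwise disjoint copies of this configuration.

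First I would fix, for $g_{\ref{many-x-paths}}(k)$ chosen large, a set $\cP^{+}$ of many pairwise disjoint $\gamma$-paths and a set $\cP^{-}$ of many pairwise disjoint $(-\gamma)$-paths, all from $A$ to $B$, choosing $\cP^{+}\cup\cP^{-}$ so as to minimise the number of edges lying outside $E(W)$. When $\gamma\neq 0$ these paths are non-zero, and a rerouting argument of the kind used for~(\ref{fewEdges}) in the proof of Theorem~\ref{primeWall} --- replacing a detour of such a path by a stretch of a (zero) subdivided edge of $W$ would contradict minimality unless the weights concerned balance, which would force a zero path --- shows that $\cP^{+}\cup\cP^{-}$ meets only $\bigO(|\cP^{+}|+|\cP^{-}|)$ subdivided edges of $W$. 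When $\gamma=0$ the two families coincide and $0\in\Gamma_{2}$, and I would instead invoke Lemma~\ref{zeroM} directly: for every endvertex $v$ in $A$ of such a zero $A$-$B$-path that has three disjoint paths to $B$, Lemma~\ref{zeroM} gives that all paths from $v$ to $B$ have a common weight in $\Gamma_{2}$, which here is $0$; hence every path between two such endvertices is zero and it only remains to route $k$ disjoint such $A$-paths through the wall.

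With $f_{\ref{many-x-paths}}(k)$ taken large enough relative to this estimate, there is a large zero subwall $W_{0}$ of $W$, deeply contained, whose interior avoids every path of $\cP^{+}\cup\cP^{-}$ --- their only contact with $W$ being their endvertices in $B$ --- and each such endvertex can be joined inside $W$ to a prescribed branch vertex of $W_{0}$ while avoiding the remaining clutter; appending these wall-paths re-anchors a large subcollection of $\cP^{+}$, and of $\cP^{-}$, to distinct branch vertices of $W_{0}$ without altering their weights. Because $\cT_{W}$ is a truncation of $\cT_{EP}$, the routing lemmas (Lemmas~\ref{connectA}, \ref{twotypespaths} and~\ref{disjointPathLink}) can now be applied, in the manner of the proof of Lemma~\ref{nonzeroLinkCase}, to thin the two re-anchored families down to $k$ $\gamma$-paths and $k$ $(-\gamma)$-paths that are pairwise disjoint and whose endvertices occur in a controlled cyclic order on the outer cycle of $W_{0}$. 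Finally, using that $W_{0}$ is a sufficiently large zero wall, one routes $k$ pairwise disjoint paths inside $W_{0}$ joining the $i$-th $\gamma$-path endvertex to the $i$-th $(-\gamma)$-path endvertex; concatenating yields the desired $k$ disjoint zero $A$-paths.

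The hard part is the disjointness bookkeeping while preserving the weights \emph{exactly} $\gamma$ and $-\gamma$: both families emanate from $A$, so they may cross one another as well as the wall, and every rerouting used to repair this must run through zero parts of the wall. Doing this simultaneously with keeping enough of $W$ untouched to locate $W_{0}$ and to route inside it --- which is precisely what the minimality choice and the resulting bound on the number of subdivided edges of $W$ met by $\cP^{+}\cup\cP^{-}$ are for --- is the crux; the degenerate case $\gamma\in\Gamma_{2}$, where $\cP^{+}$ and $\cP^{-}$ coincide, is handled separately via Lemma~\ref{zeroM}.
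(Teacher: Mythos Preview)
Your high-level plan --- concatenate a $\gamma$-path, a zero path inside a subwall, and a $(-\gamma)$-path --- is exactly the paper's, and you correctly locate the crux: making the two families mutually disjoint while keeping their weights equal to $\gamma$ and $-\gamma$. But your proposal does not actually solve that crux, and in the attempt it drops the one hypothesis designed for it.

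First, you never use the assumption that the $\gamma$-paths from $A$ to subsets of $B$ have the \EPP\ with hitting set function $h$. In the paper this hypothesis is what powers the key claim (\ref{manyxToWall}): one supposes there are too few $\gamma$-paths nicely linking $A$ to the subwall, gets a small hitting set $X_\gamma$ from the \EPP, and then derives a contradiction by rerouting one surviving path of $\cP_\gamma$ through the (zero) wall to the nails. Your alternative --- cite the non-zero-$B$-path rerouting from~(\ref{fewEdges}) --- does not transfer: those paths have both ends in $B$, whereas yours start in $A$, so the weight equations that force a contradiction in~(\ref{fewEdges}) are simply not available. The paper replaces this by a different rerouting (the ``four subdivided edges'' argument via Lemma~\ref{zeroM}) and then still needs the \EPP\ to pass from $\cP_\gamma$ to paths that nicely link to the subwall.

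Second, and more seriously, your sentence ``every rerouting used to repair this must run through zero parts of the wall'' is not what Lemma~\ref{twotypespaths} does: the new $B$--$X$-paths it outputs are stitched together from pieces of \emph{both} input families, so a $(-\gamma)$-path may be rerouted through a piece of a $\gamma$-path and lose its weight. The paper's missing idea is this. Let $M$ be the vertices of $G-A$ with three disjoint paths to $B$; by Lemma~\ref{zeroM} every $v\in M$ has a fixed weight $\gamma_v\in\Gamma_2$ to $B$. Call the \emph{starting segment} of an $A$--$B$-path its portion from $A$ to its first vertex in $M$. Then (i) any rerouting that occurs \emph{after} the starting segment preserves the total weight, because from any $v\in M$ all paths to $B$ weigh $\gamma_v$; (ii) a vertex outside $M$ is separated from $B$ by at most two vertices, so a starting segment can meet at most two paths of the other family. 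The paper prunes so that all crossings between the two families occur after the starting segments, and then invokes Lemma~\ref{twotypespaths} together with the remark following it (the output paths keep the initial segments of the $\cR$-paths), guaranteeing that the starting segments --- hence the weights --- survive. Your proposal contains none of this mechanism. As a small related point, your invocation of Lemma~\ref{zeroM} at endvertices $v\in A$ is illicit: the hypothesis ``all $B$-paths are zero'' holds only in $G-A$, so Lemma~\ref{zeroM} gives information about vertices of $G-A$, not about vertices of $A$.
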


\begin{proof}
For a fixed positive integer $k$, let $h=h(100k^3)$, $p=g_{\ref{many-x-paths}}(k)=5(6h)^2+g$ and let $f_{\ref{many-x-paths}}(m,k)=4(5p)^2\cdot 3(h+5p+1)$.
Let $M$ be the set of vertices $v$ such that $v$ has three disjoint paths to $B\setminus \{v\}$. By Lemma \ref{zeroM}, for all $v\in M$, there is a $\gamma_v\in\Gamma_2$ such that all paths from $v$ to $B$ have weight $\gamma_v$. 

Let $P$ be any path from $A$ to $B$ with endvertices $a\in A$ and $b\in B$. Suppose that $P$ intersects the interior of four subdivided edges $e_1,\ldots,e_4$ of $W$ such that the first intersections of $P$ and these subdivided edges are in this order when starting in $a$. Each endvertex of $e_4$ is distinct from both endvertices of at least one of the subdivided edges $e_1,e_2$ and $e_3$. Let $w$ be an endvertex of $e_4$ and say it is distinct from the endvertices of $e_i$ for some $i\in [3]$.
Let $v_4$ be the closest intersection of $P$ and $e_4$ to $w$ and let $v_i$ be the last intersection of $P$ and $e_i$ before $v_4$, again both times on $P$ starting in $a$. The vertex $v_i$ has three disjoint paths to $B$; two of them in $e_i$ to the endvertices of $e_i$ and a third one through $P$ and $e_2$ to $w$. By Lemma \ref{zeroM}, any two paths from $v_i$ to $B$ have the same weight. We can reroute $P$ along $e_4$ to $w$, that is we obtain a path $P'=aPv_4e_4w$. Since the weight of $v_4e_4w$ is the same as the weight of $v_4Pb$, the weight of $P'$ and $P$ is the same. Slightly altering this argument, it is easy to see that also when $P$ intersects two subdivided edges $e_1$ and $e_2$, then we can reroute $P$ along $e_2$ to any of its endvertices that are disjoint from $e_1$.

Let $\cP_{\gamma}$ be the set of size $p$ of disjoint $\gamma$-paths from $A$ to $B$ such that $\sum_{P\in\cP_{\gamma}} E(P)\setminus E(W)$ is minimum. 
Let $e$ be a subdivided edge of $W$ that gets intersected by a path in $\cP_{\gamma}$ but $e$ is not one of the first three subdivided edge that any path in $\cP_{\gamma}$ intersects in its interior, starting in the endvertex in $A$. We claim that one of the endvertices of $e$ is an endvertex of a path in $\cP_{\gamma}$.

Let $P\in\cP_{\gamma}$ be the path that intersects $e$ closest to one of its endvertices $w$. Let $v$ be the closest intersection to $w$ of $P$ and $e$. We may assume that $v\neq w$, as we would be done then. This means that there is an edge of $P$ that comes after $v$ that does not lie in $W$. Now with the observation before, we can reroute $P$ along $e$ to $w$, to obtain a path $P'$ from $A$ to $B$ of the same weight as $P$. This path $P'$ contains fewer edges outside of $W$ and is still disjoint from all paths in $\cP_{\gamma}$ except $P$. Hence, replacing $P$ by $P'$ in $\cP_{\gamma}$ would have been a better choice for $\cP_{\gamma}$. This is a contradiction and proves the claim.

It follows that the interior of at most $4p$ many subdivided edges of $W$ get intersected by paths in $\cP_{\gamma}$ and $p$ branch vertices. Therefore, we find a subwall $W_1$ of $W$ of size $2\cdot 5p\cdot 3(h+5p+1)$ that is $(h+5p+1)$-contained and that is disjoint from all paths in $\cP_\gamma$. We claim:

\begin{equation}\label{manyxToWall}
\begin{minipage}[c]{0.8\textwidth}\em
There is a set $\cP_\gamma'$ of $100k^3$ disjoint $\gamma$-paths that nicely links $A$ to $W_1$.
\end{minipage}\ignorespacesafterend 
\end{equation}

Suppose otherwise. Let $N_1$ be the nails of $W_1$. By assumption the $\gamma$-paths have the \EPP\ and, hence, there is a set $X_\gamma\subseteq G-(W_1-N_1)$ of size at most $h$ that intersects all $\gamma$-paths from $A$ to $N_1$ in $G-(W_1-N_1)$. As the size of $\cP_\gamma$ is at least $p=5(6h)^2+h$, there are $5(6h)^2$ paths in $\cP_\gamma$ that are disjoint from $X_\gamma$; remove the other paths from $\cP_\gamma$. 
Since $W_0$ is $(h+5p+1)$-contained, there is a cycle $C$ in $W$ that encapsulates $W_0$, that is disjoint from $X_\gamma$ and all paths in $\cP_\gamma$ and that intersects all but $2(h+p+1)$ rows and as  many columns of $W$. 
Similarly, as the size of $W_1$ is larger than $h+5p+1$, there is a column of $W$ that intersects $N_1$ and that is disjoint from $X_\gamma$ and all paths in $\cP_\gamma$. In this column, there is a subpath $Q$ from $N_1$ to $C$ that is disjoint from $W_1-N_1$ because $C$ encapsulates $W_1$.
By pigeon principle and because at most two branch vertices lie in the same and column, at least $6h$ of the endvertices of paths in $\cP_\gamma'$ lie in different rows or different columns, say columns (the other case works analogously). The endvertices in $B$ of three paths lie in a column such that this column and the columns at distance $1$ are disjoint from $X_\gamma$. At least one endvertex $v$ of these three paths does not lie in the first or last column of $W$. Thus, there are two columns $T_1,T_2$ at distance $1$ from the column $T$ that contains $v$. Let $v$ be the endvertex of the path $P\in\cP_\gamma$.
Let $R_1,R_2,R_3$ be some rows that intersect $C$ and that are disjoint from $X_\gamma$ and $\cP_\gamma$. By choice of $C$, we can find such rows. There are three disjoint paths $P_1,P_2,P_3$ from $v$ to $(C\cup Q)\cap B$ that do not intersect $X-\gamma$ or $W_1$, in particular, these paths have weight zero. We use here that $C$ encapsulates $W_1$ and, hence, any column that intersects $W_1$ also intersects $C$ such that from any vertex in that column there is a path to $C$ that is disjoint from $W_1$. The idea here is to start in $v$ and go to $T,T_1$ and $T_2$ through three disjoint paths and follow each of these columns to either $C\cup Q$ or some $R_i$. In the case that we follow a column to some $R_i$, go through $R_i$ to $C\cup Q$. 

If $P$ is disjoint from at least one path $P_i$, construct a path from $A$ to $N_1$ by going through $P,P_i$ and then from the endvertex in $C\cup Q$ of $P_i$ through $C\cup Q$ to $N_1$. As $W$ is a zero wall, the weight of this path is the same as the weight of $P$, which is $\gamma$. This is a $\gamma$-path from $A$ to $N_1$ in $G-(W_1-N_1)$, which is a contradiction.
Now suppose that $P$ intersects each path $P_i$. We follow $P$ from its endvertex in $A$ until it intersects two of the paths $P_1,P_2,P_3$. We apply the reroute argument before (the one at the end of that paragraph) in the first subdivided edge of the second path $P_i$ that is being intersected. One of the two endvertices is closer to $v$ and the other to $C\cup Q$ on $P_i$. After rerouting we follow $P_i$ to $v$ or $C\cup Q$ depending on if we rerouted $P$ to the path closer to $v$ or $C\cup Q$ respectively. In the case that we followed $P_i$ to $v$, we go through the remaining path $P_j$ that is not being intersected by the rerouted path to obtain a path from $A$ to $C\cup Q$. With the same argument as before, we obtain a $\gamma$-path from $A$ to $N_1$ that is disjoint from $W_1-N_1$. Again this is a contradiction.
This proves (\ref{manyxToWall}).

Since $W_1$ is a subwall of $W$ of size $2\cdot 5p\cdot 3(h+5p+1)$, we may apply the argument of (\ref{manyxToWall}) to $W_1$ and $-\gamma$. Thus, there is a $(h+5p+1)$-contained subwall $W_2$ of $W_1$ of size at least $(h+5p+1)$ and a set $\cP_{-\gamma}'$ of $100k^3$ $(-\gamma)$-paths  that nicely links $A$ to $W_2$. Since $(h+5p+1)\geq 100k^3$, we can prolong the paths in $\cP_\gamma'$ to $W_2$ so that they still nicely link $A$ to $W_2$ and since $W$ is a zero wall, they are still $\gamma$-paths.

Let $a\in A$ be an endvertex of a path $P$ from $A$ to $B$ and let $m$ be the first vertex of $M$ that $P$ intersects, we call $aPm$ the starting segment of $P$.
Whenever two starting segments intersect, then one of the corresponding paths has to lie in $\cP_\gamma'$ and the other in $\cP_{-\gamma}'$.
Note that for $y\in\{\gamma,-\gamma\}$, the starting segment of a path in $\cP_y$ can intersect at most two paths in $\cP_{-y}$ since any vertex that is not in $M$ can be separated from $B$ by two vertices. For $y\in\{\gamma,-\gamma\}$, remove any path from $\cP_y$ that intersects more than $k$ starting segments of paths in $\cP_{-y}$.
Each set $\cP_y$ now contains at least $|\cP_y|-2\frac{|\cP_{-y}|}{k}\geq 10k^2$ paths. Pick $4k$ paths $P_1,\ldots,P_{4k}\in\cP_\gamma$ and remove the $4k^2$ paths from $\cP_{-\gamma}$ whose starting segment gets intersected by a path $P_i$ and also remove the $2k$ paths from $\cP_{-\gamma}$ that intersect the starting segments of $P_1,\ldots,P_{4k}$. Let $Q_1,\ldots,Q_{4k}$ be $4k$ of the remaining paths in $\cP_{-\gamma}$. Note that if $P_i$ and $Q_j$ intersect, they intersect after their starting segment. Now apply Lemma \ref{twotypespaths} to these two path systems to obtain a set of paths $\cP'\subseteq \{P_1,\ldots,P_{4k}\}$ of $2k$ paths and a set $\cQ'$ of $2k$ paths such that the paths in $\cP'$ and $\cQ'$ are paths that nicely link $A$ to $W_2$ and all their all paths in $\cP'\cup\cQ'$ are pairwise disjoint. Furthermore, with the note after Lemma \ref{twotypespaths}, we obtain that for any $Q\in\cQ'$, there is a path $Q_i$ such that $Q$ and $Q_i$ have the same starting segment. We already know that all paths in $\cP'$ are $\gamma$-paths. Let $Q\in\cQ'$ such that $Q$ and $Q_i$ have the same starting segment. Let $a\in A$ and $m\in M$ be the endvertices of this starting segment and let $b$ and $b_i$ be the endvertices of $Q$ and $Q_i$ in $B$ respectively. As $m\in M$, any two paths from $m$ to $B$ have the same weight and, thus, $mQ_ib_i$ and $mQb$ have the same weight. It follows that the weight of $Q$ and $Q_i$ is the same and, therefore, $Q$ is a $(-\gamma)$-path.

Now there is are subsets of $\cP^*\subseteq \cP'$ and $\cQ^*\subseteq \cQ'$ of size $k$ such that the endvertices of $\cP^*$ in the top row of $W_2$ come before the endvertices of $\cQ^*$, or the other way round. As $W_2$ is at least $5k$ and as we ordered the endvertices of $\cP^*$ and $\cQ^*$, simply connect the endvertices of the paths in $\cP^*$ to the endvertices of the paths in $\cQ^*$ disjointly through $W_2$. The results in $k$ disjoint zero $A$-paths and finishes the proof.
\end{proof}

\subsection{Proof}
With this we are now ready to prove the main theorem.

\mainThm*

\begin{proof}
 We already saw in the last chapter that these conditions are necessary,
so from now on we assume that $\Gamma$ is finite and for any $x,y\in\Gamma$ with $y\neq 0$, there is an $n\in\Z$ such that $2x+ny=0$.
Let $h(k)=3k+f_{\ref{AABpath}}(k)$, let $x(m,k)=g_{\ref{primeWall}}(g_{\ref{nonzeroLinkCase}}(m,k))$ and let $w(m,k)=\max\{f_{\ref{many-x-paths}}(k),2(x+5m\cdot h(g_{\ref{many-x-paths}}(k))\}$. 
We claim that the minimal function $f$ satisfying the following conditions, is a hitting set function for the zero $A$-paths.

\begin{enumerate}[(i)]
\item $f(k)\geq 3f_{\ref{tangleTOwall}}(f_{\ref{primeWall}}(m,2x(m,k)w(m,k),g_{\ref{nonzeroLinkCase}}(m,k)))+2f(k-1)+10$
\item $f(k)\geq 5m\cdot h(g_{\ref{many-x-paths}}(k))+x(m,k)$
\end{enumerate}

For a contradiction, suppose that this is not true. 
Let $(G,A,k)$ be a minimal counterexample for $f$ being a hitting set function.
By Lemma \ref{largetanglem}, there is a tangle $\cT_{EP}$ of order at least $f_{\ref{tangleTOwall}}(f_{\ref{primeWall}}(m,2x(m,k)w(m,k),g_{\ref{nonzeroLinkCase}}(m,k)))$ in $G-A$ such that for any separation $(C,D)$ of $G-A$ of smaller order, any zero $A$-path has to intersect $D-C$. Now we apply Lemma \ref{tangleTOwall} on $\cT_{EP}$ to find a wall $W$ in $G-A$ of size $f_{\ref{primeWall}}(m,2x(m,k)w(m,k),g_{\ref{nonzeroLinkCase}}(m,k))$ such that $\cT_W$ is a truncation of $\cT_{EP}$.
Next apply Theorem \ref{primeWall} to $G-A$ and $W$ to find a zero wall $W_0$ of size at least $2x(m,k)w(m,k)$ such that either there is a non-zero linkage of size $g_{\ref{nonzeroLinkCase}}(m,k)$ for $W_0$ or there is a vertex set $X$ of size $x(m,k)$ such that all paths in $G-X-A$ between the branch vertices of $W_0$ have weight zero. Additionally, the tangle $\cT_{W_0}$ is a truncation of $\cT_W$.
In the first case, with Lemma \ref{nonzeroLinkCase}, we can find $k$ disjoint zero $A$-paths in $G$. This is a contradiction to $(G,k,A)$ being a counterexample. So we assume that the latter case occured.
Let $B_0$ be the set of branch vertices of $W_0$. It holds that:

\begin{equation}\label{allZero}
\begin{minipage}[c]{0.8\textwidth}\em
There is a vertex set $X\subseteq V(G-A)$ such that in $G-X-A$ all $B_0$-paths have weight zero.
\end{minipage}\ignorespacesafterend 
\end{equation}

Let $G'=G-X$. 
Let $M$ be the set of vertices $v$ in $G'-A$ that have three disjoint paths to $B_0\setminus \{v\}$. 
As there are three disjoint paths from any branch vertex of a wall to other branch vertices of that wall, we may apply Lemma \ref{zeroM} to $B_0$ and $M$ in $G'$. It follows that:

\begin{equation}\label{Mzero}
\begin{minipage}[c]{0.8\textwidth}\em
For a vertex $v\in M$, there is a $\gamma_v\in\Gamma_2$ such that all paths from $v$ to $B_0$ in $G'-A$ have weight $\gamma_v$.
\end{minipage}\ignorespacesafterend 
\end{equation}

We call the vertices in $M$ anchors. Every vertex $v\in V(G'-A-M)$ can be separated from $M$ by at most two anchors. We call the minimum set of such separating anchors the \emph{separating anchors} of $v$. 
Note that by Lemma \ref{connectA}, any vertex that lies in a different component than $B_0\subseteq M$ cannot be part of any zero $A$-path, thus, can be removed from $G'$. 
We define a \emph{superedge} between two anchors $v$ and $w$ as the subgraph induced by $v,w$ and all vertices whose separating anchors are exactly $v$ and $w$, provided this subgraph is connected. 
In the case that there are vertices that are separated from $M$ by only one anchor, we allow $v$ and $w$ to be the same vertex. We call $v$ and $w$ the \emph{corresponding anchors} of that superedge.
We claim:

\begin{equation}\label{noFewVert}
\begin{minipage}[c]{0.8\textwidth}\em
There is no vertex set $Y$ of size at most $5m\cdot h(g_{\ref{many-x-paths}}(k))$ that separates a zero $A$-path $P$ from the branch vertices of any subwall $W_1$ of $W_0$ of size at least $2(x+5m\cdot h(g_{\ref{many-x-paths}}(k))$ in $G'$.
\end{minipage}\ignorespacesafterend 
\end{equation}

Suppose otherwise. Let $B_1$ be the set of branch vertices of $W_1$. Let $C'$ be the component of $G'-Y$ that contains $B_1\setminus Y$ and let $C=C' \cup Y$ and $D=G-C'-E(Y)$.
This induces a separation $(C,D)$ in $G$ of order at most $5m\cdot h(g_{\ref{many-x-paths}}(k))+x(m,k)$ such that $B_1$ lies in $C$ and $P$ in $D$. As $|X\cup Y|$ is smaller than the size of $\cT_{EP}$ and as $P$ does not intersect $C-D$, it follows that $(C,D)\in \cT_{EP}$.
On the other hand, since $|X\cup Y|$ is smaller than the size of $W_1$ and since $C$ contains all branch vertices of $W_1$, it holds that $(D,C)\in \cT_{W_1}$. This is a contradiction, because $\cT_{W_1}$ is a truncation of $\cT_{W_0}$, which is a truncation of $\cT_{EP}$. This proves (\ref{noFewVert}).

We immediately obtain the following:
\begin{equation}\label{noInSUper}
\begin{minipage}[c]{0.8\textwidth}\em
There is no zero $A$-path that is contained in the union of a superedge and $A$.
\end{minipage}\ignorespacesafterend 
\end{equation}

If this was not true we could separate a zero $A$-path from $B_0$ by removing just the corresponding anchors of that superedge, which are at most two. This contradicts (\ref{noFewVert}).

We want to show that:
\begin{equation}\label{ifxEPP}
\begin{minipage}[c]{0.8\textwidth}\em
If the $\gamma$-paths from $A$ to any subset $B^*$ of $B$ have the 
\EPP\ in $G'$ with hitting set function $h$, then there are either $k$ disjoint zero $A$-paths
in $G$ or a set of size at most $f(k)$ that intersects all those
paths.
\end{minipage}\ignorespacesafterend 
\end{equation}

Provided that this is true and we can show that these $\gamma$-paths have the \EPP, this would be a contradiction since $(G,k,A)$ was chosen to be a counterexample.
We will first show that this claim is indeed true,
and then we will prove that the assumption about the $\gamma$-paths is true as well.
This then finishes the proof.
So assume that the $\gamma$-paths from $A$ to any subset $B^*$ of $B$ have the 
\EPP\ in $G'$.

Let $W_1$ be any subwall of $W_0$ of size at least $2(x+5m\cdot h(g_{\ref{many-x-paths}}(k))$ with branch vertices $B_1$.
Suppose that for each $\gamma\in\Gamma$, there is a set $Y_\gamma$ of $h(g_{\ref{many-x-paths}})$ vertices that intersects either all $\gamma$-paths or all $(-\gamma)$-paths from $A$ to $B_1$ in $G'$. Let $Y=\cup_{\gamma\in\Gamma}Y_\gamma$; the size of $Y$ is at most $m\cdot h(g_{\ref{many-x-paths}}(k))$. 
Since $|X\cup Y|\leq f(k)$, the set $X\cup Y$ cannot intersect all zero $A$-paths in $G$. Let $P$ be a zero $A$-path that is disjoint from $X\cup Y$.
By (\ref{noFewVert}) and Menger's Theorem and because $|Y|+3\leq 5m\cdot h(g_{\ref{many-x-paths}}(k))$, there are three disjoint paths $Q_1,Q_2$ and $Q_3$ from $P$ to $B_1$ in $G'-Y$. Let the endvertices on $P$ of $Q_1,Q_2$ and $Q_3$ be $a_1,a_2$ and $a_3$ respectively and let the endvertices of $P$ itself be $v$ and $w$. We may assume that starting in $v$ and following $P$, we come across $a_1,a_2$ and $a_3$ in this order.
Note that for each $i\in\{1,2,3\}$, the path $Q_i$ is internally disjoint from $P$.
Now there are three disjoint paths from $a_2$ to $B_1\subseteq B_0$. These are $a_2Pa_1Q_1$, $Q_2$ and $a_2Pa_3Q_3$. It follows that $a_2\in M$ and by Lemma \ref{zeroM}, it holds that there is a $\gamma_{a_2}\in\Gamma_2$ such that all paths from $a_2$ to $B_1$ have weight $\gamma_{a_2}$. In particular, it holds that $P_2$ has weight $\gamma_{a_2}$. Let the weight of $vPa_2$ be $x$. As $P$ has weight zero, the path $a_2Pw$ has weight $-x$. 
We obtain the $(x+\gamma)$-path $vPa_2P_2$ from $A$ to $B_1$ and, since $\gamma=-\gamma$, also the $(-x-\gamma)$-path $wPa_2P_2$ from $A$ to $B_1$ in $G'-Y$. This is a contradiction because $Y_{x+\gamma_{a_2}} \subseteq Y$ intersects one of these two paths.

Let $W_1$ be a subwall of $W_0$ of size $w$ that is disjoint from $X$ with branch vertices $B_1$ (possible because $W_0$ large enough). We may assume that there is some $\gamma\in \Gamma$ such that there is no set $Y_\gamma$ of at most $h(g_{\ref{many-x-paths}})$ vertices that intersects all $\gamma$-paths or all $(-\gamma)$-paths. 
Since the $y$-paths have the \EPP\ with hitting set function $h$, it follows that there are $g_{\ref{many-x-paths}}$ disjoint $\gamma$-paths from $A$ to $B_1$ as well as that many $(-\gamma)$-paths from $A$ to $B_1$ in $G'$. 
Apply Lemma \ref{many-x-paths} here to find $k$ disjoint zero $A$-paths.
This finishes the proof of (\ref{ifxEPP}).

Now we only need to show that:
\begin{equation}\label{xEPP}
\begin{minipage}[c]{0.8\textwidth}\em
the $\gamma$-paths from $A$ to any subset $B^*\subseteq B$ in $G'$ have the \EPP\ with hitting set function $h(k)$.
\end{minipage}\ignorespacesafterend 
\end{equation}

For each anchor $v$ in $G'$, there is a $\gamma_v\in\Gamma_2$ such that all paths from $v$ to $B^*$ have weight $\gamma_v$.
We say there is a \emph{$\gamma$-path} from $a\in A$ to $v$ through a superedge $e^*$ if there is a $\gamma$-path from $a$ to $v$ in $G'[a\cup V(e^*)]$ that intersects the interior of $e^*$. In particular, it holds that $v\in V(e^*)$.
Otherwise if an anchor $v$ and a vertex $a\in A$ are adjacent and the edge between them has weight $\gamma$, we say that the edge between them is a \emph{direct $\gamma$-path}. 

Let $H=G'$ and in $H$, add an edge from $a\in A$ to $v\in M$ of weight $(\gamma-\gamma_v)$ if there is a $(\gamma-\gamma_v)$-path
from $a$ to $v$. Moreover, if this path intersects the interior of some superedge, remove the interior of this superedge from $H$. Then remove all edges incident to $A$ except for the ones we added in a previous step. Note that, technically, any direct $(\gamma-\gamma_v)$-path has been replaced by itself.

Assume there are $2k$ disjoint paths $P_1,\ldots, P_{2k}$ from $A$ to $B^*$ in $H$.
In each path $P_i$, replace again the edge that is incident to $A$ by the corresponding path that this edge replaced. Let $Q_1,\ldots, Q_{2k}$ be the resulting paths from $A$ to $B^*$ in $G'$.
We call the path that replaced an edge in $P_i$, the starting segment of $Q_i$.
Note that by construction of $H$, $Q_i$ really is a path and its weight is $\gamma$ because it can decomposed into a path of weight $\gamma-\gamma_v$ and a path of weight $\gamma_v$ for some $v\in M$.
Whenever two paths $Q_i$ and $Q_j$ intersect, the intersection lies in the starting segment of both paths. 
As the endvertices of the starting segment of $Q_i$ were also part of $P_i$, any intersection of two paths $Q_i$ and $Q_j$ has to lie in the interior of a superedge.
In particular, if the starting segment of $Q_i$ is a direct $(\gamma-\gamma_v)$-path, then $Q_i$ is disjoint from all other paths $Q_j$. 
It follows that any path $Q_i$ may only intersect exactly one other path $Q_j$.
Therefore, if we always remove one of the two paths $Q_i$ and $Q_j$ that intersect, we obtain a set of $k$ disjoint $\gamma$-paths from $A$ to $B^*$. Hence, we are done in this case.

Therefore, by Menger's theorem, we may assume that there is a vertex set $X_1\subseteq V(H)$ of size at most $2k$ that intersects all paths from $A$ to $B^*$ in $H$.
For $\gamma=\alpha+\gamma_v$, if there is an $\alpha$-path from $a\in A$ to $v$ through a superedge $e^*$, we say that $v$ is a \emph{$\gamma$-anchor for $a$ in $e^*$}. In this case we say that $e^*$ is a \emph{$\gamma$-superedge}. If there is a direct $\gamma$-path from $a\in A$ to $v$, we just say that $v$ is a \emph{$\gamma$-anchor for $a$}.
A \emph{closest $\gamma$-superedge} is a $\gamma$-superedge such that there is a path from one of its anchors to $B^*$ that does not intersect the interior of a $\gamma$-superedge.

The set $X_1$ is a subset of $V(G')$ and we remove $X_1$ from $G'$ and redefine the superedges in $G'-X_1$.
By using the property of $X_1$, any $\gamma$-path from $A$ to $B^*$ in $G'-X_1$ has to pass through the interior of a closest $\gamma$-superedge.
As $|X\cup X_1|<f(k)$, there still is at least one $\gamma$-path from $A$ to $B^*$ in $G'-X_1$. Let $C$ be the set of closest $\gamma$-superedges and let $C_A$ be the set of $\gamma$-anchors of superedges in $C$. Let $I$ be the union of the interiors of superedges in $C$.
We define the $B^*$-side in $G'-X_1$ as the components of $G'-I$ that contain vertices of $B^*$. The anchors in $C_A$ do not lie on the $B^*$-side, however, every closest $\gamma$-superedge has two anchors and the second does lie on the $B^*$-side.

We want to show that $C_A-C_A-B^*$-paths can be made into $\gamma$-paths from $A$ to $B^*$. 
Let $P$ be a $C_A-C_A-B^*$-path with an endvertex $c\in C_A$. We say that $P$ goes through a vertex $c_0\in C_A$ in the right direction of $e_0$ if $e_0$ is a closest $\gamma$-superedge with $\gamma$-anchor $c_0$ and if $P$ contains $c_0$ and if, starting in $c$, $P$ does not intersect the interior of $e_0$ after $c_0$.
If $P$ does go through a vertex $c_0$ in the right direction of some closest $\gamma$-superedge $e_0$, we may replace $cPc_0$ by the $(\gamma-\gamma_{c_0})$-path from $A$ to $c_0$ through $e_0$. This yields an $A$-$B^*$-path because $P$ went through $c_0$ in the right direction of $e_0$ and its length is $\gamma$ since any path from $c_0$ to $B^*$ has weight $\gamma_{c_0}$.
If $P$ does not pass through $c$ in the right direction of some closest $\gamma$-superedge $e$, then it has to intersect the interior of $e$ immediately after $c$.
As the anchor of $e$ that is distinct from $c$ lies on the $B^*$-side, the path $P$ intersects the $B^*$-side before picking up another vertex of $C_A$. Let $c_0$ be the first vertex in $C_A$ after $c$. As $P$ is on the $B^*$ side, it has to go through the interior of a closest $\gamma$-superedge $e_0$ to reach $c_0$. As each closest $\gamma$-superedge contains a vertex of $C_A$, the superedge $e_0$ has to contain $c_0$. Therefore, the path $P$ passes through $c_0$ in the right direction of $e_0$ (here we also use that $B^*\subseteq M$ and the interior of a superedge is disjoint from $M$). Assuming we find $k$ disjoint $C$-$C$-$B^*$-paths $P_1,\ldots,P_k$,
we make each path $P_i$ into a $\gamma$-path $Q_i$ from $A$ to $B^*$ with the argument before. As the paths $P_1,\ldots,P_k$ were disjoint also the paths $Q_1,\ldots, Q_k$ are disjoint. Thus, we are done if we find $k$ disjoint $C$-$C$-$M^*$-paths.

By Lemma \ref{AABpath}, we may assume that there is a vertex set $X_2$ of size at most $f_{\ref{AABpath}}(k)$ that intersects all $C_A-C_A-B^*$-paths. Let $C'$ be the set of closest $\gamma$-superedges in $G'-X_1-X_2$ and let $C_A'$ be the set of $\gamma$-anchors of superedges in $C'$. Note that $C_A'\subseteq C_A$. Let $I'$ be the union of the interiors of superedges in $C'$. Assume there are $c_1,c_2\in C_A'$ that are in the same component of $G'-X_1-X_2-I'$. So there has to be a path $P$ between $c_1$ and $c_2$ that does not intersect the $B^*$-side in $G'-X_1-X_2$. Then in $G'-X_1-X_2$, follow $P$ from $c_1$ to $c_2$, pass through the interior of a closest $\gamma$-superedge $e$ with $\gamma$-anchor $c_2$ to the other anchor of $e$.
From here we can go directly to $B^*$ without intersecting $P$ because we are on the $B^*$-side. This yields a $C$-$C$-$B^*$-path in $G'-X_1-X_2$, which is a contradiction.

Therefore, no two vertices in $C_A'$ are in the same component of $G'-X_1-X_2-I'$. Let $v\in C_A'$ be a $\gamma$-anchor of a closest $\gamma$-superedge $e\in C'$ and let $v$ lie in the component $O$ of $G'-X_1-X_2-I'$. Let $w$ be the other anchor of $v$.
Assume there is no $\gamma$-path from $A$ to $v$ in $O$. We claim that no $\gamma$-path from $A$ to $B^*$ intersects any vertices of $O$ or of the interior of $e$.
Suppose otherwise, let $P$ be such a path.
As there is no $\gamma$-path from $A$ to $w$ through $w$ and as $v$ does not lie on the $B^*$-side, the path $P$ has to contain a $\gamma$-path from $A$ to $v$ that lies completely in $O$. This is a contradiction and proves the claim. Therefore, we may remove $O$ and all vertices in the interior of $e$ in this case.

Let $C^*$ be the remaining closest $\gamma$-superedges and let $C_A^*$ be the set of $\gamma$-anchors of the superedges in $C^*$. Let $I^*$ be the interior of the superedges in $C^*$. Add an edge from $a\in A$ to $c\in C_A^*$ if there is a $\gamma$-path from $a$ to $c$ in $G'-X_1-X_2-I^*$. This edge represents a $\gamma$-path from $a$ to $c$. Remove all edges incident to $A$ that we did not add in the last step. Let $H'$ be this new graph.
If there are $k$ disjoint $A$-$B^*$-paths $P_1,\ldots, P_k$ in $H'$, then we replace the edge incident to $a\in A$ in $P_i$ by a $\gamma$-path in $G'-X_1-X_2$ that this edge represented to obtain a path $Q_i$. Now $Q_1,\ldots,Q_k$ are disjoint $\gamma$-paths by construction. By Menger's theorem, we may assume that there is a vertex set $X_3$ of size at most $k$ that intersects all $A$-$B^*$-paths in $H'$. 
It holds that $X_3\subseteq V(G'-X_1-X_2)$.
In $G'-X_1-X_2-X_3$ there is no $\gamma$-path from $A$ to $B^*$ anymore because any such path needs to intersect the interior of a closest $\gamma$-superedge and it needs to start on the side opposite the $B^*$-side. Such a path contains a path that $X_3$ intersects which would be a contradiction.
So this finishes the proof.
\end{proof}

\section{Conclusion}
With the main theorem, we obtain the following characterization of $A$-paths of weight $\gamma$ that have the EPP.

\mainCor*

\begin{proof}
As we have seen before these conditions are necessary.
So assume that they hold for $\Gamma$. 
First we will show that there is a $\delta\in\Gamma$ such that $2\delta=-\gamma$.

If $\gamma\in\Gamma_2$, let $y\neq \gamma$ be a non-zero element of $\Gamma$ (note that if $\gamma$ is the only non-zero element of $\Gamma$, then $A$-paths of length $\gamma$ are the non-zero $A$-paths and we can do that with Wollan's theorem). 
The order of $y$ has to be larger than $2$, as otherwise $2\gamma+ny\in\{0,y\}$ and never $=\gamma$.
It follows that $2y+n\gamma\in\{2y,2y+\gamma\}$. As the order of $y$ is larger than $2$ it cannot hold that $2y+\gamma=\gamma$. Hence, $2y=\gamma$ and we are done after setting $\delta=-y$.

Now let $m>2$ be the order of $\gamma$. If $m$ is odd, then $2(\frac{m+1}{2}\gamma)=\gamma+m\gamma=\gamma$ and we can choose $\delta=-\frac{m+1}{2}\gamma$. Otherwise if $m$ is even, it holds that for every $n\in\Z$ that $2\frac{m}{2}\gamma+n\frac{m}{2}\gamma \in \{0,\frac{m}{2}\gamma\}$. Since the order of $\gamma$ is  larger than $2$, it holds that $\frac{m}{2}\gamma\neq \gamma$. This contradicts the assumptions we made about $\Gamma$. This proves the claim.

We may remove all edges between two vertices of $A$ that does not have weight $\gamma$, since any such edge is never part of an $A$-path of length $\gamma$.
Any edge in $G[A]$ is an $A$-path of weight $\gamma$.
If there is a matching of size $k$ in $G[A]$, we found $k$ disjoint $A$-paths of weight $\gamma$. Otherwise we find a set $X_1$ of at most $2k$ vertices that intersect all edges in $G[A]$ and, thus, all edges between vertices of $A$ in $G$. Remove $X_1$ from $G$ and observe that there are no edges with both endvertices in $A$ anymore.

Now add $\delta$ to the weight of each edge that is incident to a vertex of $A$. Let $H$ be this graph. We want to show that any $A$-path of weight $\gamma$ in $G$ corresponds to a zero $A$-path in $H$.
If we can show that the zero $A$-paths have the \EPP\, it is quite easy to see that also the $A$-paths of length $\gamma$ have it.

Let $x,y\in\Gamma$ such that $y\neq 0$. We need to show that there is an $n\in\Z$ such that $2x+ny=0$. Let $m$ be the order of $\gamma$.
It holds that $m\gamma=0$. Moreover, there is an $n_1$ such that $2x+n_1y=\gamma$ and an $n_2$ such that $2\cdot 0+n_2y=\gamma$. Then if we choose $n=n_1+(m-1)n_2$, it follows that $2x+ny=2x+n_1\gamma+(m-1)n_2y=\gamma+(m-1)\gamma=0$. This implies that the zero $A$-paths with respect to $\Gamma$ have the \EPP.

Let $P$ be any $A$-path in $H$, then $P$ is also an $A$-path in $G$.
As there are no edges between two vertices of $A$, the path contains exactly two edges incident to $A$. By construction, the weight of $P$ in $G$ and the weight of $P$ in $H$ differs by exactly $2\delta=-\gamma$. So if the weight of $P$ is $\gamma$ in $G$, then its weight in $H$ is $\gamma+2\delta=0$. On the other hand a zero $A$-path in $H$ corresponds to an $A$-path of weight $\gamma$ in $G$.
So if there are $k$ disjoint zero $A$-paths in $H$, there are $k$ disjoint $A$-paths of weight $\gamma$ in $G$. If there is a set that intersects all zero $A$-paths in $H$, then this also intersects all $A$-paths of weight $\gamma$ in $G$. This finishes the proof.
\end{proof}

I want to note here that in contrast to Wollan's result on non-zero $A$-paths, the hitting set size for the $A$-paths of weight $\gamma\in\Gamma$ very much depends on $\Gamma$ (rather on the size of $\Gamma$). 

Now we can check whether the $A$-paths of length $d$ modulo $m$ have the \EPP.

\begin{corollary}
$A$-paths of length $d\ \m$ have the \EPP\ if and only if
for any $x,y\in\{0,\ldots,m-1\}$ with $y\neq 0$ there is an $n\in\Z$ such that $2x+ny\equiv d\ \m$.
\end{corollary}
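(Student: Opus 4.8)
The plan is to read this off from Corollary~\ref{mainCor}, applied with $\Gamma=\Z_m$ and $\gamma=d$. Since $\Z_m$ is finite, the first bullet of that corollary is automatically satisfied, and its second bullet is exactly the condition displayed here: for all $x,y\in\{0,\ldots,m-1\}$ with $y\neq 0$ there is an $n\in\Z$ with $2x+ny\equiv d\ \m$. So it suffices to show that ``$A$-paths of length $d\ \m$ have the \EPP'' is equivalent to ``$A$-paths of weight $d$ with respect to $\Z_m$ have the \EPP''; combining this equivalence with Corollary~\ref{mainCor} then yields the statement.

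First I would treat the implication ``weight-$d$ $A$-paths over $\Z_m$-labelled graphs have the \EPP'' $\Rightarrow$ ``$A$-paths of length $d\ \m$ have the \EPP''. Given an arbitrary graph $G$ with $A\subseteq V(G)$, label every edge with $1\in\Z_m$. Then the weight of any path equals the number of its edges modulo $m$, so the weight-$d$ $A$-paths of the labelled graph are precisely the $A$-paths of $G$ of length $d\ \m$. Hence a hitting set function witnessing the \EPP\ for weight-$d$ $A$-paths also witnesses it for $A$-paths of length $d\ \m$.

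For the reverse implication I would use subdivisions. Given a $\Z_m$-labelled graph $G$, form $G'$ by replacing each edge $e$ of weight $\alpha(e)\in\{0,\ldots,m-1\}$ with an internally disjoint path whose length is at least $1$ and congruent to $\alpha(e)$ modulo $m$ --- length $\alpha(e)$ when $\alpha(e)\geq 1$, and length $m$ when $\alpha(e)=0$ --- leaving all original vertices, and in particular $A$, untouched. The new internal vertices have degree $2$ and lie outside $A$, so the $A$-paths of $G'$ correspond bijectively to the $A$-paths of $G$, the length in $G'$ being congruent modulo $m$ to the weight in $G$; thus the $A$-paths of $G'$ of length $d\ \m$ correspond exactly to the weight-$d$ $A$-paths of $G$, and disjointness is preserved in both directions. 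Moreover, from a hitting set $S'$ in $G'$ for $A$-paths of length $d\ \m$ one obtains a hitting set in $G$ for weight-$d$ $A$-paths of size at most $|S'|$: replace each vertex of $S'$ lying in the interior of a subdivided edge $e$ by an endvertex of $e$. This still meets every weight-$d$ $A$-path $Q$ of $G$, since the subdivision of $Q$ is an $A$-path of $G'$ of length $d\ \m$ and therefore meets $S'$, and if it meets $S'$ in the interior of a subdivided edge then $Q$ uses that edge in $G$. Hence a hitting set function for $A$-paths of length $d\ \m$ gives one for weight-$d$ $A$-paths.

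This corollary is genuinely easy, so there is no substantial obstacle; the only place that calls for a little care is the hitting-set transfer in the subdivision direction --- checking that pushing a hitting vertex from the interior of a subdivided edge to an endvertex never frees a weight-$d$ $A$-path --- and this is taken care of by the $A$-path correspondence between $G$ and $G'$ described above.
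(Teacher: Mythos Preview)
Your proof is correct and follows the paper's approach: apply Corollary~\ref{mainCor} with $\Gamma=\Z_m$ and $\gamma=d$, observing that the all-$1$ labelling identifies path weight with path length modulo $m$. The paper's own proof stops there; you go further and justify the reverse equivalence (that the \EPP\ for $A$-paths of length $d\bmod m$ transfers to weight-$d$ $A$-paths over arbitrary $\Z_m$-labellings) via a subdivision argument, which the paper leaves implicit but which is in fact needed for the ``only if'' direction to follow cleanly from Corollary~\ref{mainCor}.
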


\begin{proof}
Labelling all the edges in a graph with $1\in\Z_m$ implies that the weight of any path is equal to its length modulo $m$. Now we apply the characterization in Corollary \ref{mainCor} to finish the proof.
\end{proof}

Some calculations give us the known results:
The $A$-paths of length $0\ \textrm{mod}\ 2$, $1\ \textrm{mod}\ 2$, $0\ \textrm{mod}\ 4$ and $2\ \textrm{mod}\ 4$ have the \EPP, while the $A$-paths of length $1\ \textrm{mod}\ 4$, $3\ \textrm{mod}\ 4$ and $d\ \m$ for a non-prime $m\neq 4$ do not have the \EPP. As mentioned in the introduction, we also obtain the following result.

\begin{corollary}
Let $m$ be a fixed odd prime. The $A$-paths of length $d\ \m$ have the \EPP.
\end{corollary}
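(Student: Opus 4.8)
The plan is to obtain this as an immediate consequence of the preceding corollary, which states that the $A$-paths of length $d\ \m$ have the \EPP\ precisely when, for all $x,y\in\{0,\ldots,m-1\}$ with $y\neq 0$, there is an integer $n$ with $2x+ny\equiv d\ \m$. So the only thing left to verify is that this arithmetic condition always holds once $m$ is prime.

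First I would fix arbitrary $x,y\in\{0,\ldots,m-1\}$ with $y\neq 0$. Since $m$ is prime and $0<y<m$, the residue $y$ is invertible modulo $m$; write $y^{-1}$ for its inverse. Choosing an integer $n$ with $n\equiv (d-2x)\,y^{-1}\ \m$ then gives $ny\equiv d-2x\ \m$, that is, $2x+ny\equiv d\ \m$. As $x$ and $y$ were arbitrary, the hypothesis of the previous corollary is satisfied, and hence the $A$-paths of length $d\ \m$ have the \EPP.

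There is no real obstacle in this argument: all the substance is already contained in Theorem \ref{mainThm} and Corollary \ref{mainCor} (applied to the group $\Z_m$ with every edge labelled $1$, so that the weight of a path equals its length modulo $m$), and the remaining point is the elementary fact that a nonzero element of the field $\Z_m$ generates the whole additive group, so that $\{ny:n\in\Z\}=\Z_m$. (I note in passing that the hypothesis ``$m$ odd'' is not actually needed for this reasoning — the case $m=2$ goes through identically — but it is kept so as to match the question of Bruhn et al.\ \cite{BHJ18b} that motivated the statement.)
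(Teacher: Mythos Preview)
Your proof is correct and essentially identical to the paper's: both reduce to the preceding corollary and then verify the arithmetic condition by noting that any nonzero $y\in\Z_m$ is invertible (equivalently, generates $\Z_m$) when $m$ is prime, so $n\equiv (d-2x)y^{-1}$ works. Your side remark that the oddness of $m$ is immaterial to the argument is also accurate.
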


\begin{proof}
For primes $m$, all non-zero elements in $\Z_m$ are generators of $\Z_m$, that is for any $x,y\in\Z_m$ with $y\neq 0$ there is an $n\in\N$ such that $ny=x$, in particular,
there is an $n$ such that $ny=-2x+d$. This implies that $2x+ny=d$, which means we are done.
\end{proof}

Since multiple modulo constraints can always be written as a single modulo constraint (if there is a solution), this also characterizes all $A$-paths that have multiple modulo constraints.

All the proofs that I made, technically, also work for long $A$-paths, that is, for a fixed positive integer $\ell$, $A$-paths that have at least length $\ell$. 

\begin{theorem}
Let $\gamma\in\Gamma$. The long $A$-paths of weight $\gamma$ have the \EPP\ if and only if the $A$-paths of weight $\gamma$ do.
\end{theorem}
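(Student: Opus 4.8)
The plan is to show that the very same condition that Corollary~\ref{mainCor} attaches to the $A$-paths of weight $\gamma$ — call it $(\star)$: \emph{$\Gamma$ is finite and for all $x,y\in\Gamma$ with $y\neq 0$ there is an $n\in\Z$ with $2x+ny=\gamma$} — also characterizes the \EPP\ for the \emph{long} $A$-paths of weight $\gamma$. Since Corollary~\ref{mainCor} already says that $(\star)$ characterizes the \EPP\ for the ordinary $A$-paths of weight $\gamma$, the equivalence asserted in the theorem follows at once. So I would prove two implications: if $(\star)$ fails then the long $A$-paths of weight $\gamma$ do not have the \EPP, and if $(\star)$ holds then they do.

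\textbf{If $(\star)$ fails.} I would reuse the counterexamples constructed for the necessity part of Theorem~\ref{mainThm}, adding the single requirement that the underlying grid (or elementary wall) have size $n\ge\ell$. In each of those constructions it is shown that every $A$-path of weight $\gamma$ runs from the left side of the grid to its right side (and in the case $\gamma=0$ with $\Gamma_2$ infinite enters and leaves through a matched pair of side edges), hence uses at least $n-1$ edges of the grid and therefore has length at least $\ell$. Thus in these particular graphs the long $A$-paths of weight $\gamma$ are exactly the $A$-paths of weight $\gamma$ — this is the place where an argument is really needed, since the \EPP\ does not in general descend to subclasses — and the two facts already established, namely that there are no two vertex-disjoint such paths and no vertex set of size below $n/10$ meeting all of them, show directly that no function can be a hitting set function for the long $A$-paths of weight $\gamma$.

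\textbf{If $(\star)$ holds.} I would rerun the whole proof of Theorem~\ref{mainThm}, together with Lemmas~\ref{nonzeroLinkCase} and~\ref{many-x-paths}, with ``zero $A$-path'' replaced throughout by ``long zero $A$-path''; the reduction of Corollary~\ref{mainCor} from weight $\gamma$ to weight $0$ transfers with no change, since adding a fixed $\delta$ to the edges meeting $A$ alters no path length, so a long $A$-path of weight $\gamma$ in $G$ is precisely a long zero $A$-path in the auxiliary graph $H$. Three observations carry the adaptation. First, Lemma~\ref{largetanglem} is insensitive to the particular type of zero $A$-path being sought — its proof is ``independent of the specific type'' — so a minimal counterexample $(G,A,k)$ for long zero $A$-paths still yields the tangle $\cT_{EP}$ all of whose separations are crossed by every long zero $A$-path, which is all that the tangle manipulation in $(\ref{noFewVert})$ uses; likewise the steps that invoke the counterexample hypothesis to extract a zero $A$-path avoiding a small vertex set (as in the proof of $(\ref{ifxEPP})$) now extract a long one for exactly the same reason. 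Second, whenever the argument instead \emph{produces} disjoint zero $A$-paths — in Lemma~\ref{nonzeroLinkCase}, in Lemma~\ref{many-x-paths}, and hence in both positive branches of the main proof — these paths are assembled by threading through a subwall of $W_0$ (Figure~\ref{zeroA} and the analogous construction in Lemma~\ref{many-x-paths}); after a harmless modification of the routing, in which each path is made to dip to within $\ell$ of the bottom of that subwall at staggered depths so as to remain disjoint, every produced path has length at least $\ell$, provided the subwall is tall enough, which one arranges by enlarging the monotone functions $f_{\ref{nonzeroLinkCase}}$, $f_{\ref{many-x-paths}}$ and $f_{\ref{primeWall}}$. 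Third — and this is what keeps the recursion finite — none of the auxiliary objects handled in between ever needs to be long: the $\gamma$-paths from $A$ to subsets of $B$ in $(\ref{ifxEPP})$ and $(\ref{xEPP})$, the linkages, the anchors and the superedges are treated exactly as before, so Wollan's Theorem~\ref{Wollan}, Proposition~\ref{AABpath}, Lemma~\ref{pureLink} and Lemma~\ref{twotypespaths} are applied verbatim and claim $(\ref{xEPP})$ is proved word for word.

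The main obstacle is precisely the verification of the second and third observations: one has to confirm that the sufficiency proof never upgrades a \emph{short} path into a long zero $A$-path (so that no ``long'' analogue of Wollan's theorem or of Proposition~\ref{AABpath} is required), and conversely that every concrete zero $A$-path it outputs genuinely runs through a controllable amount of wall that can be made to have length at least $\ell$. This is a finite inspection of the surgeries in the proofs of Theorem~\ref{primeWall}, Lemma~\ref{nonzeroLinkCase} and Lemma~\ref{many-x-paths}; once it is carried out, the parameter $\ell$ never interacts with the recursion on $k$ (which, through Lemma~\ref{largetanglem}, never subdivides an existing path) nor with the structural recursion (which is about $\gamma$-paths, not long paths), so a single uniform enlargement of the hitting set functions suffices at every level.
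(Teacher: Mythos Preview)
Your proposal is correct and follows essentially the same approach as the paper, which itself gives only a brief sketch: the counterexamples already force the $A$-paths of weight $\gamma$ to traverse the whole grid and hence be arbitrarily long, and on the sufficiency side the constructed zero $A$-paths thread through a zero wall that can be made large enough (as a function of $k$ and $\ell$) so that the produced paths have length at least $\ell$. You are more explicit than the paper about the points that need checking --- in particular that Lemma~\ref{largetanglem} is type-insensitive, that the reduction of Corollary~\ref{mainCor} preserves path lengths, and that none of the auxiliary objects (the $\gamma$-paths to $B$, the linkages, the superedge machinery) need be long --- but the overall strategy is identical.
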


The counterexamples for $A$-paths of weight $\gamma$ already force the $A$-paths to be arbitrarily long (they have to traverse the whole grid from left to right). On the other hand all the proofs for the sufficiency can be easily adapted to the long case (although that is quite tedious). For example in Lemma \ref{nonzeroLinkCase}, we connect a set $A$ to a zero wall $W$ with a non-zero linkage and then through $W$ we construct zero $A$-paths. However, $W$ is a zero wall and can be made arbitrarily large (as long as it is bounded by some function in $k$). This means we are able to prolong the paths through $W$ by $\ell$ to get paths of length at least $\ell$ that still have weight $\gamma$. Of course, the size of the hitting set in this case will depend on $\ell$. 

Through Theorem \ref{ThomZero}, we immediately obtain that the zero cycles with respect to any finite Abelian group $\Gamma$ have the \EPP.

\begin{theorem}
Let $\Gamma$ be finite. The cycles of weight zero have the \EPP.
\end{theorem}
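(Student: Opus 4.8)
The plan is to run the familiar wall/treewidth dichotomy, extracting the packing side from Theorem~\ref{ThomZero} and the covering side from an induction over a tree decomposition. Fix a positive integer $k$; I want a bound $f(k)$ (which will also depend on $m=|\Gamma|$) so that every $\Gamma$-labelled graph $G$ contains either $k$ disjoint zero cycles or a set of at most $f(k)$ vertices meeting all of them. Put $N:=f_{\ref{ThomZero}}(m,2k)$. If $G$ contains a wall of size $N$, then Theorem~\ref{ThomZero} gives a zero wall $W_0$ of size $2k$. Any cycle contained in $W_0$ is a union of subdivided edges of $W_0$ — once such a cycle enters a subdivided edge it must follow it to the next branch vertex — and each subdivided edge of $W_0$ has weight $0$, so every cycle of $W_0$ has weight $0$. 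Since a wall of size $2k$ contains $k$ pairwise vertex-disjoint cycles (for instance $k$ disjoint bricks), $G$ contains $k$ disjoint zero cycles and we are done in this case. This is where the absence of any condition on $\Gamma$ becomes visible: in contrast with the proof of Theorem~\ref{mainThm}, nothing has to be recombined inside the wall, so no solvability of an equation $2x+ny=0$ is needed.

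So assume $G$ has no wall of size $N$. By the contrapositive of Theorem~\ref{tangleTOwall}, $G$ has no tangle of order $f_{\ref{tangleTOwall}}(N)$, and since a graph of sufficiently large treewidth admits a tangle of large order (Robertson and Seymour), the treewidth of $G$ is less than some $w=w(N)$ depending only on $m$ and $k$. I would then prove, by induction on the maximum number $\nu$ of disjoint zero cycles of $G$, that $G$ has a zero-cycle hitting set of size at most $w\nu$; taking $f(k):=wk$ then closes the argument, since in this case $\nu\le k-1$ (otherwise we already have $k$ disjoint zero cycles). Fix a rooted tree decomposition of $G$ with all bags of size at most $w$, and for a node $t$ let $V_t$ be the union of the bags in the subtree at $t$. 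If $\nu=0$ there is nothing to do. If $\nu\ge1$, pick a deepest node $t$ such that $G[V_t]$ contains a zero cycle, and fix one such cycle $C_0$. The separator property of tree decompositions gives that $V_t\setminus X_t$ is a union of components of $G-X_t$, each of which lies inside $G[V_c]$ for a single child $c$ of $t$; by the choice of $t$ none of these contains a zero cycle, so every zero cycle of $G-X_t$ is disjoint from $V_t$, hence from $C_0$. Therefore $\nu(G-X_t)\le\nu-1$, so by induction $G-X_t$ has a zero-cycle hitting set $S$ with $|S|\le w(\nu-1)$, and $S\cup X_t$ meets every zero cycle of $G$ and has size at most $w\nu$.

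The only part calling for any thought is this bounded-treewidth step, and the single delicate point there is verifying that after deleting the bag $X_t$ at the deepest node admitting a zero cycle, every surviving zero cycle avoids $V_t$ — which is exactly the separator property of tree decompositions together with the depth-minimality of $t$. Everything else (Case~A, and assembling the two cases into one function $f$) is immediate from Theorem~\ref{ThomZero} and the excluded-wall bound on treewidth, so I do not expect any genuine obstacle; quantitatively one only has to make $w(N)$ explicit, and one could equally well route Case~A through Theorem~\ref{primeWall} instead, but Theorem~\ref{ThomZero} already suffices.
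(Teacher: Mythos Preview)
Your argument is correct. Both cases are sound: in Case~A the zero wall from Theorem~\ref{ThomZero} immediately supplies $k$ disjoint zero cycles, and in Case~B your tree-decomposition induction is clean --- the key inequality $\nu(G-X_t)\le\nu-1$ follows exactly as you say, since every zero cycle surviving in $G-X_t$ lies in a component contained in some $V_c$ with $c$ a child of $t$, and by the depth-minimality of $t$ no such $G[V_c]$ contains a zero cycle; hence any packing in $G-X_t$ is disjoint from $C_0\subseteq V_t$ and can be extended by $C_0$.

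The route differs from the paper's. The paper argues by minimal counterexample: assuming no suitable $f$ works, one takes a minimal counterexample $(G,k)$ and, via an analogue of Lemma~\ref{largetanglem} for zero cycles, obtains a large tangle~$\eptan$ in $G$; then Theorem~\ref{tangleTOwall} yields a large wall and Theorem~\ref{ThomZero} a large zero wall with $k$ disjoint zero cycles, contradicting the counterexample. So the paper reuses the tangle machinery already developed for zero $A$-paths, and never explicitly handles a bounded-treewidth case. Your approach trades that machinery for the standard no-wall/bounded-treewidth dichotomy plus a direct induction on $\nu$; this is more elementary and self-contained (it does not require adapting Lemma~\ref{largetanglem} to cycles), at the cost of invoking the excluded-wall bound on treewidth, which the paper does not state. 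Either route is perfectly adequate here; the packing side is identical, and the covering side is where they diverge.
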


The proof works by finding a large tangle, very similar to the zero $A$-paths, and then applying Theorem \ref{tangleTOwall} to obtain a large wall. Then we use Theorem \ref{ThomZero} to find a large zero wall, which contains many disjoint zero cycles, which means we are done.

As a closing note, I wanted to note that the reason why I assumed $\Gamma$ to be Abelian is because otherwise there is no nice definition for an $A$-path of weight $\gamma$. For any non-Abelian group we can find elements $\gamma_1,\ldots,\gamma_n$ such that $\gamma_1+\ldots+\gamma_n=\gamma$ while $\gamma_n+\ldots+\gamma_1\neq \gamma$. 
If we label the edges of a path with $\gamma_1,\ldots,\gamma_n$, then in an undirected graph, there is no reason why we should choose one order of summation over the other.

\bibliographystyle{amsplain}
\bibliography{erdosposa}

\vfill

\small
\vskip2mm plus 1fill
\noindent
Version \today{}
\bigbreak

\noindent
Arthur Ulmer
{\tt <arthur.ulmer@uni-ulm.de>}\\
Institut f\"ur Optimierung und Operations Research\\
Universit\"at Ulm\\
Germany\\

\end{document}